\numberwithin{equation}{section}
\theoremstyle{plain}
\newtheorem{theorem}{Theorem}[section]
\newtheorem{lemma}[theorem]{Lemma}
\newtheorem{corollary}[theorem]{Corollary}
\newtheorem{proposition}[theorem]{Proposition}
\theoremstyle{definition}
\theoremstyle{remark}
\newtheorem{case[theorem]}{Case}
\title{On the Erd{\H o}s distinct distance problem in the plane}
\author{Larry Guth  and Nets Hawk Katz}
\begin{document}

\maketitle

\begin{abstract} In this paper, we prove that a set of $N$ points in ${\bf R}^2$ has at least $c{N \over \log N}$ distinct
distances, thus obtaining the sharp exponent in a problem of Erd{\H o}s. We follow the set-up of Elekes and Sharir which, in
the spirit of the Erlangen program, allows us to study the problem in the group of rigid motions of the plane. This
converts the problem to one of point-line incidences in space. We introduce two new ideas in our proof.  In order to
control points where many lines are incident, we create a cell decompostion using the polynomial ham sandwich theorem. This
creates a dichotomy: either most of the points are in the interiors of the cells, in which case we immediately get
sharp results, or alternatively the points lie on the walls of the cells, in which case they are in the zero set of
a polynomial of suprisingly low degree, and we may apply the algebraic method. In order to control points where only two
lines are incident, we use the flecnode polynomial of the Rev. George Salmon to conclude that most of the lines lie on a ruled
surface. Then we use the geometry of ruled surfaces to complete the proof.
\end{abstract}

\section{Introduction}

In \cite{E}, Paul Erd{\H o}s posed the question: how few distinct distances are determined by $N$ points in the plane.
Erd{\H o}s checked that if the points are arranged in a square grid,
then the number of distinct distances is $\sim {N \over \sqrt{\log N}}$.  He conjectured that for any arrangement of $N$ points, the number of
distinct distances is $\gtrsim {N \over \sqrt{\log N}}$.  (Throughout this
paper, we use the notation $A \gtrsim B$ to mean that there is a universal constant $C > 0$ with $A > C B$.)

In the present paper, we prove

\begin{theorem} \label{main}  A set of $N$ points in the plane determines $\gtrsim {N \over \log N}$ distinct
distances. \end{theorem}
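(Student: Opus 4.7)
The plan is to follow the Elekes--Sharir framework to translate the distinct distance question into an incidence problem for lines in ${\bf R}^3$, and then to prove the required incidence bound via a combination of polynomial partitioning and classical ruled surface theory.

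First I would parametrize the group $G$ of orientation-preserving rigid motions of the plane as a three-dimensional real variety, and to each ordered pair $(p,q) \in P \times P$ associate
\[
L_{p,q} = \{\sigma \in G : \sigma(p) = q\},
\]
which, in suitable coordinates on $G$, is a line in ${\bf R}^3$. The key point is that $|p - p'| = |q - q'|$ if and only if there is a unique $\sigma \in G$ carrying $p \mapsto q$ and $p' \mapsto q'$, i.e.\ $L_{p,q} \cap L_{p',q'} \ne \emptyset$. Writing $f(r)$ for the number of pairs with $|p-q|=r$, Cauchy--Schwarz gives
\[
\sum_r f(r)^2 \;\gtrsim\; \frac{N^4}{d(P)},
\]
where $d(P)$ denotes the number of distinct distances. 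Theorem \ref{main} will therefore follow from an upper bound $\sum_r f(r)^2 \lesssim N^3 \log N$, which is exactly a bound on the number of intersecting pairs among the $N^2$ lines $L_{p,q}$.

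To prove that incidence bound I would stratify intersection points by the number $k$ of lines passing through them. For the ``$k$-rich'' points with $k$ large, I would build a partitioning polynomial of degree $D$ via the polynomial ham-sandwich theorem, whose zero set $Z$ subdivides ${\bf R}^3$ into $O(D^3)$ open cells each met by at most $O(N^2/D^3)$ of our lines. A Szemer\'edi--Trotter-type bound applied cell-by-cell controls the contribution of rich points lying in cell interiors; if instead most of the action happens on $Z$, one is reduced to studying lines inside the zero set of a polynomial whose degree is surprisingly small relative to the number of lines it contains.

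The main obstacle, and where I expect the argument to be most delicate, is the contribution from $2$-rich points, at which Szemer\'edi--Trotter gives nothing useful. The route I would take is purely algebraic: invoke Salmon's classical flecnode polynomial to show that an irreducible surface containing many lines each meeting many others must be ruled, peel off the ruled components of $Z$, and then exploit the one-parameter structure of a ruled surface (two lines in the same ruling meet in at most one point, and a generic ruling meets each other line at most once) to bound the $2$-rich intersections by $O(N^3)$. Before any of this works one must verify a non-degeneracy statement for the family $\{L_{p,q}\}$ itself---no plane or irreducible ruled surface in $G$ contains too many of these lines---which translates back to a well-understood statement about symmetries of $P$. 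Combining the cell decomposition, the flecnode/ruled-surface argument, and this non-degeneracy check will yield the target $O(N^3 \log N)$ bound on intersecting pairs, and hence $d(P) \gtrsim N/\log N$.
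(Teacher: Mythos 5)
Your proposal matches the paper's argument essentially step for step: the Elekes--Sharir reduction to intersections among the lines $L_{p,q}$ in a coordinatized rigid-motion group, Cauchy--Schwarz turning a bound on intersecting pairs into the distance lower bound, polynomial ham-sandwich partitioning for $k$-rich points with the dichotomy between cell interiors and the zero set, the flecnode polynomial and ruled-surface geometry to handle $2$-rich points, and the non-degeneracy check that few $L_{p,q}$ lie in a plane or regulus. This is the same route the paper takes, so nothing further to compare.
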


Various authors have proved lower bounds for the number of distinct distances. These include but are not limited to
\cite{M}, \cite{CSzT}, \cite{Sz}, \cite{SoTo},\cite{T}.  The most recent lower bound, in \cite{KT}, says that the number of distances
is $\gtrsim N^{.8641}$.  For a more thorough presentation of the history of the 
subject see the forthcoming book \cite{GIS}.

In \cite{ES}, Elekes and Sharir introduced a completely new approach to the distinct distance problem, which uses the symmetries of the problem in a novel way.  They laid out
a plan to prove Theorem \ref{main}, which we follow in this paper.  Their approach connects the distinct distance problem to
three-dimensional incidence geometry.  Using their arguments, Theorem 1.1 follows from the following estimate about the incidences
of lines in $\mathbb{R}^3$.

\begin{theorem} \label{introincid} Let $\frak L$ be a set of $N^2$ lines in $\mathbb{R}^3$.  Suppose that $\frak L$ contains
$\lesssim N$ lines in any plane or any regulus.  Suppose that $2 \le k \le N$.  Then the number of points that
lie in at least $k$ lines is $\lesssim {N^3 k^{-2}}$.
\end{theorem}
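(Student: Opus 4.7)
The plan is to apply the polynomial method, exploiting the two tools advertised in the abstract: polynomial ham sandwich cell decomposition and Salmon's flecnode polynomial. Let $S_k$ denote the set of points through which at least $k$ lines of $\frak L$ pass; the target is $|S_k| \lesssim N^3 k^{-2}$.

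First I would apply the polynomial ham sandwich theorem to obtain a polynomial $P$ of degree $D$, with $D$ to be optimized (ultimately $D \sim N/k$), such that $Z(P)$ partitions $\mathbb{R}^3 \setminus Z(P)$ into $O(D^3)$ open cells, each containing $O(|S_k|/D^3)$ points of $S_k$. Split $S_k$ into \emph{cellular} points (in the open cells) and \emph{algebraic} points (on $Z(P)$). In the cellular case, any line of $\frak L$ not contained in $Z(P)$ crosses at most $D+1$ cells, so $\sum_c |L_c| \lesssim N^2 D$, where $L_c$ is the set of lines meeting cell $c$. The Szemer\'edi--Trotter bound applied inside each cell (after a generic projection) bounds the number of $k$-rich cellular points there by $|L_c|^2 k^{-3} + |L_c| k^{-1}$; summing, using a convexity estimate on $\sum |L_c|^2$, and balancing with $D \sim N/k$ gives $|S_{k,\mathrm{cell}}| \lesssim N^3 k^{-2}$.

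For the algebraic case, lines not contained in $Z(P)$ meet $Z(P)$ in at most $D$ points, so they contribute at most $N^2 D/k \lesssim N^3 k^{-2}$ rich points. The heart of the argument is when many lines of $\frak L$ lie entirely in $Z(P)$. Here I would invoke Salmon's flecnode polynomial $\mathrm{Fl}(P)$, of degree $11D-24$, which vanishes at points of $Z(P)$ through which $Z(P)$ carries a line to third order. If enough lines of $\frak L$ lie in $Z(P)$, then $\mathrm{Fl}(P)$ vanishes identically on $Z(P)$, forcing every irreducible component of $Z(P)$ to be a ruled surface. Such components are planes, reguli, or higher-degree singly ruled surfaces: the first two are handled by the hypothesis that $\frak L$ contains $\lesssim N$ lines in any plane or regulus, and on a singly ruled component of degree $\geq 3$ a generic point lies on a unique ruling line, which forces the $k$-rich points to be confined to a curve of controlled degree whose incidences with $\frak L$ can be bounded directly.

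I expect the main obstacle to be the ruled-surface analysis on the algebraic side: carefully tracking degrees through the flecnode step when $Z(P)$ is reducible, and handling singular loci of ruled surfaces where the uniqueness of the ruling fails. A related difficulty is formulating the dichotomy between ``few lines in $Z(P)$'' (easy) and ``enough to force ruledness'' (the flecnode case) in a manner compatible with the sharp exponent $k^{-2}$; the hypothesis $k \leq N$ is used crucially here to make the algebraic and cellular bounds match.
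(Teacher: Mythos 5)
Your proposal correctly identifies the two tools advertised in the abstract, and the cellular half of your sketch tracks the paper's handling of the case $k \geq 3$. But you are welding the cell decomposition and the flecnode polynomial into a single argument, whereas the paper uses them for \emph{disjoint ranges of $k$} with different supporting machinery, and this merger leaves two genuine gaps. On the algebraic side of the cell decomposition for $k \geq 3$ the paper does \emph{not} invoke the flecnode polynomial. It instead uses the observation that a regular point of $Z$ lying in at least three lines contained in $Z$ must be a flat point (the second fundamental form vanishes, since it vanishes in three tangent directions), and that flatness is cut out by polynomials of degree $\lesssim 3d$; Bezout then bounds the number of critical and flat lines on the plane-free part of $Z$ by $\lesssim d^2$, and with $d \sim N/k$ this degree-squared count is precisely where the $k^{-2}$ factor comes from. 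Your ruled-surface replacement does not obviously recover this: the paper's analysis of singly-ruled surfaces (the exceptional-point/exceptional-line structure, and the Salmon-style argument counting $\leq N-1$ intersections per generator) yields that $\lesssim N^2$ lines in a plane-free, regulus-free ruled surface of degree $\lesssim N$ have $\lesssim N^3$ pairwise intersections. That is a bound on $2$-rich points with no $k$-dependence; the claim that ``$k$-rich points are confined to a curve of controlled degree'' is not backed by an argument extracting $N^3 k^{-2}$, and I do not see how ruled-surface geometry alone gives you that saving.

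The second gap is at $k = 2$, which is in the stated range $2 \leq k \leq N$. Your cell decomposition sets $D \sim N/k \sim N$, and then the flecnode argument needs more than $11D^2 - 24D \sim 11N^2$ lines inside $Z(P)$ to force ruled factors. You only have $N^2$ lines total, so the constants collide and nothing is forced. The paper handles $k = 2$ with no ham sandwich cell decomposition at all. Section 3 instead runs a probabilistic degree-reduction argument (random subsampling of lines, in the style of Guth--Katz on joints) inside a proof by contradiction: assuming a hypothetically large intersection count $Q N^3$, one produces a polynomial of degree $O(\alpha^{1/2} N / Q^{1/2})$, which is much smaller than $N$ for large $Q$, so the flecnode bound bites with room to spare. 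Even granting that, you also omit the iteration required to factor $p$ into ruled versus unruled pieces and the ruled pieces into plane/regulus versus singly-ruled pieces, and to control the intersections of lines assigned to different buckets; that bookkeeping, together with an ``optimality of $N$'' induction at the end, is the bulk of the $k=2$ proof.
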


Recently, there has been a lot of progress in incidence geometry coming from the polynomial method.  In \cite{D}, Dvir 
used the polynomial method to prove the finite field Kakeya conjecture, which can be considered as a problem in
incidence geometry over finite fields.  In \cite{GK}, the polynomial method was applied to incidence geometry problems in
$\mathbb{R}^3$, solving the joints problem.  The method was simplified and generalized in
\cite{EKS}, \cite{KSS}, and \cite{Q}.  Kaplan, Sharir, and Shustin (\cite{KSS}) and Quilodr\'an (\cite{Q}) solved the joints problem in higher dimensions.  
For context, we mention here the joints theorem in $n$ dimensions.

\begin{theorem} \label{jointsndim} (\cite{KSS}, \cite{Q}) Let $n \ge 3$.  Let $\frak L$ be a set of $L$ lines in $\mathbb{R}^n$.  A joint of $\frak L$ is a point that
lies in $n$ lines of $\frak L$ with linearly independent directions.  The number of joints of $\frak L$ is $\le C_n L^{\frac{n+1}{n}}$.
\end{theorem}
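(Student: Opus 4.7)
The plan is to use the polynomial method, in the style introduced for the three-dimensional joints problem in \cite{GK} and extended to all dimensions in \cite{KSS, Q}. Assume for contradiction that some configuration of $L$ lines admits more than $C_n L^{(n+1)/n}$ joints for a sufficiently large $C_n$, and among all such counterexamples select one minimizing $L$. Write $J$ for the number of joints.

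The first step is a parameter count. The space of polynomials in $n$ variables of degree at most $d$ has dimension $\binom{d+n}{n} \sim d^n/n!$, so vanishing at the $J$ joints imposes $J$ linear conditions and leaves a nonzero solution as soon as $d^n \gtrsim_n J$. Pick a polynomial $P$ of \emph{minimum} degree $d \lesssim_n J^{1/n}$ vanishing at every joint.

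Next, call a line $\ell \in \frak L$ \emph{rich} if it meets more than $d$ joints. Then $P|_\ell$ is a univariate polynomial of degree at most $d$ with more than $d$ zeros, so $P \equiv 0$ on every rich line. This furnishes the algebraic engine of the argument: at any joint $p$ all of whose $n$ incident lines are rich, $P$ vanishes identically along $n$ lines through $p$ with linearly independent directions, forcing every first-order directional derivative, and hence every partial derivative $\partial_i P$, to vanish at $p$. Thus $\nabla P(p) = 0$ at every joint whose incident lines are all contained in the zero set of $P$.

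The closing step is a counting balance. Each non-rich line carries at most $d$ joints, so the total joint-line incidence count coming from non-rich lines is at most $Ld$, while the joints themselves contribute at least $nJ$ incidences. When $J$ is appreciably larger than $Ld \lesssim L J^{1/n}$, a positive fraction of joints lies only on rich lines, so $\nabla P$ vanishes at that many joints. Each $\partial_i P$ is then a polynomial of degree at most $d-1$ vanishing on a large subset of joints. Either this contradicts the minimality of $d$ (if every $\partial_i P$ vanishes on all joints, then every $\partial_i P$ must be identically zero, forcing $P$ constant and hence zero), or one prunes the low-incidence lines — each removal destroys at most $d$ joints and so preserves a large joint count — and applies the inductive hypothesis on a strictly smaller configuration to derive a contradiction.

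The main obstacle is calibrating this counting balance: the interpolation degree $d \lesssim J^{1/n}$ must be chosen in concert with the richness threshold so that the non-rich incidence count $Ld$ is decisively beaten by $J$, and so that the subset of joints on which $\nabla P$ vanishes is large enough to violate the minimality of $d$ (or to support the inductive step after pruning). The algebraic ingredients — polynomial interpolation, univariate vanishing on rich lines, and the gradient identity at fully-rich joints — are standard once this bookkeeping is arranged, and it is the matching between the degree bound and the joint count that produces the target exponent.
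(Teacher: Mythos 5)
Your overall strategy (interpolate a minimum-degree polynomial $P$ at the joints, use the univariate vanishing bound to force $P\equiv 0$ on rich lines, deduce $\nabla P=0$ at any joint whose spanning lines are all rich, and close via minimality of $\deg P$) is exactly the polynomial method of \cite{KSS} and \cite{Q}. Two points need attention, one of them serious.

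First, the exponent. Carrying out the bookkeeping you defer: $d\lesssim_n J^{1/n}$, so the non-rich incidence count is at most $Ld\lesssim_n LJ^{1/n}$, and for this to be decisively beaten by the $\geq nJ$ total incidences one needs $LJ^{1/n}\ll J$, i.e.\ $J\gtrsim_n L^{n/(n-1)}$. So when the argument closes it proves $J\lesssim_n L^{n/(n-1)}$, and \emph{not} $L^{(n+1)/n}$. In fact the exponent $(n+1)/n$ in the statement appears to be a slip for $n/(n-1)$: for $n=3$ the latter reads $L^{3/2}$, matching the three-dimensional joints bound of \cite{GK}, and the grid built from $N$ axis-parallel hyperplanes in each of the $n$ coordinate directions has $L=nN^{n-1}$ lines and $J=N^{n}\sim L^{n/(n-1)}$ joints, so any bound with an exponent smaller than $n/(n-1)$ (such as $(n+1)/n$) is false. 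You should not expect the calibration to produce the stated exponent; it cannot.

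Second, the closing dichotomy is not airtight as written. Your incidence count shows only that \emph{some} joints lie exclusively on rich lines, so $\nabla P$ vanishes on a subset of the interpolated joints; that does not contradict the minimality of $\deg P$, which would require $\nabla P$ to vanish at \emph{all} of them. The clean fix, following \cite{Q}, is to reverse the order of the steps: first prove the lemma that if every line of $\frak L$ contains at least one joint then some line carries at most $c_n J^{1/n}$ joints (for if every line carried more, then every line would be rich, $P$ would vanish on all of $\frak L$, hence $\nabla P$ would vanish at every joint, and a nonzero partial derivative of $P$, having lower degree and vanishing at every joint, would violate minimality of $\deg P$; if every $\partial_i P\equiv 0$ then $P$ is constant, hence zero, a contradiction). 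Then iterate: repeatedly delete a line carrying the fewest joints, destroying at most $c_n J^{1/n}$ joints per deletion; after at most $L$ deletions no joints remain, so $J\leq c_n L J^{1/n}$, i.e.\ $J\leq (c_n L)^{n/(n-1)}$. This removes the ``positive fraction'' loose end and the minimal-$L$ counterexample entirely, and it makes explicit that the exponent produced is $n/(n-1)$.
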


One of the remarkable things about the polynomial method is how short the proofs are.  The finite field Kakeya problem and the joints
problem were considered to be very difficult, and many ideas were tried in both cases.  The proof of the finite field Kakeya result (\cite{D})
and the simplified proof of the joints theorem (\cite{KSS} or \cite{Q}) are each about one page long.  This simplicity
gives the feeling that these are the ``right" proofs for these theorems.

In \cite{EKS}, Elekes, Kaplan, and Sharir used the polynomial method to prove the case $k=3$ of Theorem \ref{introincid}.  (It is a special case of Theorem 9 of that paper.)
It remains to prove Theorem \ref{introincid} when $k=2$ and when $k$ is large.  This requires two new ideas.  When $k=2$, the key new
idea is an application of ruled surfaces.  When $k$ is large, the key new idea is an application of a ham sandwich theorem from topology.
Let's discuss these new ideas in more detail.

First we explain the extra difficulty that occurs when $k=2$, as opposed to $k=3$.  The fundamental idea of the polynomial method is to find
a polynomial $p$ of controlled degree whose zero set $Z$ contains the set of lines $\frak L$.  
Then one uses the geometry of $Z$ to study $\frak L$.  A point where three lines of 
$\frak L$ intersect is an unusual point of the surface $Z$: it is either a critical point of $Z$ or else a `flat' point of $Z$.  
One can use algebraic geometry to control the critical points and flat points of $Z$ in terms of the degree of $p$.  But a point of $Z$ where 
two lines of $\frak L$ intersect does not have to be either critical or flat, and we don't know of any special property of such a point.

Reguli play an important role in the case $k=2$.  If $l_1$, $l_2$, and $l_3$ are three pairwise-skew lines in ${\bf R}^3$,
there is a 1-parameter family of lines that intersects all three lines.  The union of the lines in the 1-parameter family is
a surface called a regulus.  A regulus is a degree 2 algebraic surface.  An example is the surface defined by the equation
$z = xy$.  A set of $N^2$ lines in a regulus can have $\sim N^4$ points of intersection.  A regulus is an example of a ruled
surface.  In this paper, a ruled surface means an algebraic surface which contains a line through each point.

We apply the theory of ruled surfaces to prove our estimate in the case $k=2$.  We first observe that if a surface $Z$ of controlled degree contains too many lines, then some component of the surface $Z$ must be ruled.  In this way, we can reduce to the
case of a set of lines contained in a ruled surface of controlled degree.  
Ruled surfaces have some special structure, and we use that structure to bound the intersections between
the lines.  A ruled surface is called singly-ruled if a generic point in the surface lies in only one line in the surface.  It is well known that
planes and reguli are not singly-ruled, but every other irreducible ruled surface is.  The reason
is that if a surface is not singly-ruled, it is easy to find three lines $l_1,l_2,$ and $l_3$
in the surface which meet infinitely many lines not at one of the possibly three points of intersection of $l_1,l_2,$ and $l_3$. This implies that the surface has a factor which is a
plane, if any two of $l_1,l_2,$ and $l_3$ is coplanar and a regulus if they are pairwise
skew. A point where two lines intersect inside of a singly-ruled surface must be critical - except for points lying in the union of a controlled number
of lines and a finite set of additional exceptions.  By using this type of result, the structure of ruled surfaces helps us to prove our estimate.

Next we try to explain the extra difficulty that occurs for large $k$.  An indication of the difficulty is that for large $k$, Theorem \ref{introincid} does
not hold over finite fields.  (When $k=2$ or $3$, it's an open question whether Theorem \ref{introincid} holds over finite fields, but we suspect that it does.)  
The counterexample occurs when one considers $\frak L$ to be all of the lines in $\mathbb{F}^3$.  (Here, $\mathbb{F}$ denotes a finite field.)
This situation is reminiscent of the situation for the Szemer\'edi-Trotter theorem.

The Szemer\'edi-Trotter incidence theorem (\cite{SzT}) is the most fundamental and important result in extremal incidence geometry.
It was partly inspired by Erd{\H o}s's distance problem, and it has played a role in all the recent work on the subject. 

\begin{theorem} \label{Szt} (Szemer\'edi-Trotter) Let $\frak L$ be a set of $L$ lines in $\mathbb{R}^2$.  Then the number of points
that lie in at least $k$ lines is $\le C (L^2 k^{-3} + L k^{-1})$.

\end{theorem}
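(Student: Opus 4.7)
The plan is to deduce the theorem from Székely's application of the crossing number inequality. Let $P$ denote the set of points lying in at least $k$ lines of $\frak L$, write $m = |P|$, and let $I$ be the total number of incidences between $P$ and $\frak L$, so that $I \ge km$. The target estimate $m \lesssim L^2 k^{-3} + L k^{-1}$ then amounts to bounding $m$ in terms of $L$ and $k$.

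First I would construct a graph $G$ drawn in the plane as follows: take $P$ as the vertex set, and along each line $\ell \in \frak L$ connect consecutive points of $P \cap \ell$ by the segment of $\ell$ between them. A line containing $t$ points of $P$ contributes $t - 1$ edges, so $|E(G)| = I - L$. Two edges can cross only if they lie on two different lines of $\frak L$, and two distinct lines meet in at most one point, so the number of crossings in this drawing is at most $\binom{L}{2} \le L^2/2$. The crossing number inequality asserts that any graph drawn in the plane with $|E| \ge 4|V|$ has at least $c\,|E|^3 / |V|^2$ crossings, for some absolute $c > 0$. Combining these two facts will do the job.

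Then I would split into cases. In the degenerate regime where $I \le 2L$ or $|E(G)| < 4|V(G)|$, one gets $I - L \lesssim m$, which combined with $I \ge km$ immediately yields $m \lesssim L/k$, the second term in the estimate. Otherwise $|E(G)| = I - L \gtrsim km$, and the crossing lemma gives $L^2 \gtrsim (km)^3/m^2 = k^3 m$, i.e. $m \lesssim L^2 k^{-3}$, the dominant term. The only substantive ingredient to quote is the crossing number inequality itself, which admits a short probabilistic proof by random vertex deletion; the rest is the double count $|E(G)| = I - L$ and an easy case split. An alternative in the spirit of the rest of the paper would be polynomial partitioning in $\mathbb{R}^2$: choose a polynomial of degree $D \sim m/\sqrt{L}$ via the ham sandwich theorem to split $P$ into $\sim D^2$ balanced cells, bound incidences inside each cell by the trivial estimate, and handle the points on the zero set using that each line not contained in it meets it in at most $D$ points. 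The main subtlety in either approach is balancing the parameters so that the two terms $L^2 k^{-3}$ and $L k^{-1}$ emerge from the two competing contributions.
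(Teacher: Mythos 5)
The paper does not actually prove Theorem \ref{Szt}: it is quoted as a known result from \cite{SzT} and used as a black box (for instance in the proof of Lemma \ref{pointsinsurface} and, via projection, as Theorem \ref{sthigh}). So there is no in-paper proof to compare against. Your argument is the standard Sz\'ekely crossing-number proof, and it is correct. Two small points worth tightening: the edge count should be $|E(G)| = \sum_{\ell}\max(|P\cap\ell|-1,0)\ge I-L$ (equality can fail if some line contains no point of $P$), but since you only need a lower bound on $|E(G)|$ the inequality is what you use anyway; and in the degenerate case one should note that for bounded $k$ the trivial bound $m\le\binom{L}{2}$ already gives $m\lesssim L^2 k^{-3}$, so the argument is only really needed once $k$ exceeds a fixed constant. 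Your remark that one could instead run a polynomial-partitioning proof in $\mathbb{R}^2$ is apt: that is exactly the two-dimensional shadow of the cell decomposition the paper builds in Theorem \ref{polycell} and exploits in Theorem \ref{incidence}, and the paper itself points to \cite{KMS} for a polynomial-partitioning proof of Szemer\'edi--Trotter. So while the crossing-number route is more elementary and self-contained, the partitioning route is the one that is genuinely in the spirit of the rest of the paper.
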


The Szemer\'edi-Trotter theorem is also false over finite fields: the counterexample occurs when one considers
$\frak L$ to be all of the lines in $\mathbb{F}^2$.  All of the proofs of the theorem involve in some way the topology of
$\mathbb{R}^2$.  One approach, which is important in our paper, is the cellular method introduced in the seminal
paper  \cite{CEGSW} by Clarkson, Edelsbrunner, Guibas, Sharir, and Welzl.  The cellular method is a kind of divide-and-conquer
argument.  One carefully picks some lines, which divide the plane into cells, and then one studies $\frak L$
inside of each cell.

The cellular method has been very successful for problems in the plane, but only partly successful in higher dimensions.
For example, in \cite{FS}, Feldman and Sharir attacked the (3-dimensional) joints problem using the cellular method
(among other tools).  They were able to prove that the number of joints determined by $L$ lines is $\lesssim L^{1.62}$.
(For contrast, the algebraic method gives $\lesssim L^{3/2}$.)

It seems to us that there are strong analogies between Theorem \ref{introincid} and the Szemer\'edi-Trotter theorem, and
also between Theorem \ref{introincid} and the joints theorem.  As in the Szemer\'edi-Trotter theorem,  topology must play some role. As in the joints theorem, it is natural for polynomials 
to play some role.

To prove Theorem \ref{introincid} when $k$ is large, we construct a cell decomposition where the walls of the cells form an algebraic surface $Z$ defined
by a polynomial $p$.
The polynomial is found by a topological argument, using the polynomial version of the ham sandwich theorem.
At this point, our argument involves a dichotomy.  Let $\frak S$ denote the points that lie in at least $k$ lines of $\frak L$.  
In one extreme case, the points of $\frak S$ are evenly distributed
among the open cells of our decomposition.  In this case, we prove our estimate by the cellular method, similar to arguments from 
 \cite{CEGSW}.  In another extreme case, the points of $\frak S$ all lie in $Z$.   
 In this case, it turns out that the lines of $\frak L$ also
 lie in $Z$.  In this case, we prove our estimate by the polynomial method, studying the critical and flat points of  $Z$
 as in \cite{GK} or \cite{EKS}.
 
In Section 2, we explain the plan laid out by Elekes and Sharir.  In particular, we explain how Theorem \ref{main} follows
from Theorem \ref{introincid}.  In Section 3, we prove Theorem \ref{introincid} in the case $k=2$ using the ruled surfaces
method.  We begin with the necessary background on ruled surfaces.  In Section 4, we prove Theorem \ref{introincid} for
$k \ge 3$ using the polynomial cell method.  We begin with background on the polynomial ham sandwich theorem.
In an appendix, we follow how our argument plays out when the set of points is a square grid.  This example shows that several of our estimates
are sharp up to constant factors, including Theorem \ref{introincid}.

{\bf Acknowledgements:} The first author is partially supported by NSERC, by NSF grant DMS-0635607, and by the Monell
Foundation.  The second author is partially supported by NSF grant DMS-1001607. He would like to
thank Michael Larsen for some very helpful discussions about algebraic geometry. He would also like to thank the
Institute of Advanced Study for the use of its magnificent duck pond during a visit which resulted in this paper. Both authors would like to thank the helpful referee because of whom the
exposition in the paper is significantly improved.

\section{Elekes-Sharir framework}

Elekes and Sharir \cite{ES} developed a completely new approach to the distinct distance problem, connecting it to 
incidence geometry in 3-dimensional space.  In this section, we present (a small variation of) their work.

Let $P \subset {\bf R^2}$ be a set of $N$ points.  We let $d(P)$ denote the set of non-zero distances among points of
$P$.

$$ d(P) := \{ d(p,q) \}_{p,q \in P, p \not= q}. $$

To obtain a lower bound on the size of $d(P)$, we will prove
an upper bound on a set of quadruples.  We let $Q(P)$ be the set of quadruples,  $(p_1,p_2,p_3,p_4) \in P^4$
satisfying

\begin{equation}  \label{quadruple} d(p_1,p_2)=d(p_3,p_4) \not= 0. \end{equation}

We refer to the elements of $Q(P)$ as distance quadruples.  If $d(P)$ is small, then $Q(P)$ needs to be large.
By applying the Cauchy-Schwarz inequality, we easily obtain the following inequality.

\begin{lemma}  \label{CS} For any set $P \subset {\bf R^2}$ with $N$ points, the following inequality holds.

$$ |d(P)| \geq {N^4 - 2N^3 \over |Q(P)|}.$$
 
\end{lemma}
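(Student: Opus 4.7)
The plan is a direct Cauchy--Schwarz argument on the distance multiplicity function. For each $\delta \in d(P)$, let
$$ n_\delta := \#\{(p,q) \in P \times P : p \neq q, \ d(p,q) = \delta\} $$
be the number of ordered pairs of distinct points realizing the distance $\delta$.

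First I would record the two basic counting identities. Summing $n_\delta$ over all realized distances counts all ordered pairs of distinct points in $P$, giving
$$ \sum_{\delta \in d(P)} n_\delta \;=\; N(N-1). $$
On the other hand, the definition \eqref{quadruple} of a distance quadruple groups each element of $Q(P)$ by the common value $d(p_1,p_2) = d(p_3,p_4)$, so
$$ \sum_{\delta \in d(P)} n_\delta^2 \;=\; |Q(P)|. $$

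Next I would apply Cauchy--Schwarz to the vectors $(n_\delta)_{\delta \in d(P)}$ and $(1)_{\delta \in d(P)}$:
$$ \bigl(N(N-1)\bigr)^2 \;=\; \Bigl(\sum_{\delta \in d(P)} n_\delta\Bigr)^2 \;\le\; |d(P)| \cdot \sum_{\delta \in d(P)} n_\delta^2 \;=\; |d(P)| \cdot |Q(P)|. $$
Rearranging and using $N^2(N-1)^2 = N^4 - 2N^3 + N^2 \ge N^4 - 2N^3$ yields the claimed bound
$$ |d(P)| \;\ge\; \frac{N^2(N-1)^2}{|Q(P)|} \;\ge\; \frac{N^4 - 2N^3}{|Q(P)|}. $$

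There is no real obstacle in this argument; it is a textbook Cauchy--Schwarz move, and the only minor point is to keep the bookkeeping consistent in choosing ordered pairs of distinct points (so that the pair count is exactly $N(N-1)$ and matches the definition of $Q(P)$ via distance quadruples excluding the zero distance).
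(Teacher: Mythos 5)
Your proof is correct and is essentially identical to the paper's: both define the multiplicity function $n_\delta$, observe that $\sum n_\delta = N^2 - N$ and $\sum n_\delta^2 = |Q(P)|$, and apply Cauchy--Schwarz. Your only addition is spelling out the final cosmetic inequality $N^2(N-1)^2 \ge N^4 - 2N^3$, which the paper leaves implicit.
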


\begin{proof} Consider the distances in $d(P)$, which we denote by $d_1, ..., d_m$ with $m = | d(P) |$.  
There are $N^2 - N$ ordered pairs $(p_i, p_j) \in P^2$ with $p_i \not= p_j$.  Let $n_i$ be the number of 
these pairs at distance $d_i$.  So $\sum_{i=1}^m n_i = N^2 - N$.

The cardinality $|Q(P)|$ is equal to $\sum_{i=1}^m n_i^2$.  But by Cauchy-Schwarz,

$$ (N^2 - N)^2 = \left( \sum_{i=1}^m n_i \right)^2 \le \left( \sum_{i=1}^m n_i^2 \right) m 
= |Q(P)| |d(P)| . $$

Rearranging, we see that $|d(P)| \ge (N^2 - N)^2 |Q(P)|^{-1}$.  \end{proof}

To prove Theorem \ref{main}, it suffices to prove the following upper bound on $|Q(P)|$.  

\begin{proposition} \label{quadbound} For any set $P \subset {\bf R^2}$ of $N$ points, the number of
quadruples in $Q(P)$ is bounded by $|Q(P)| \lesssim N^3 \log N$.
\end{proposition}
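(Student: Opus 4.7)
The plan is to convert the quadruple count $|Q(P)|$ into a line-incidence problem in $\mathbb{R}^3$ and then invoke Theorem~\ref{introincid}. Following the Elekes--Sharir recipe, I would associate to each ordered pair of distinct points $(a,b) \in P \times P$ the $1$-parameter family $\ell_{a,b}$ of orientation-preserving rigid motions $g$ of the plane satisfying $g(a) = b$. The $3$-dimensional space of such motions admits coordinates in which every $\ell_{a,b}$ becomes an affine line; I would establish this parameterization explicitly, using the fact that the center of such a rotation must lie on the perpendicular bisector of $\overline{ab}$ and that the rotation angle is then determined by the choice of center. This yields a set $\mathfrak{L}$ of at most $N(N-1) < N^2$ lines in $\mathbb{R}^3$.

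The payoff of the setup is a clean rewriting of $|Q(P)|$ as an incidence sum. A quadruple $(p_1,p_2,p_3,p_4) \in Q(P)$ determines, and is determined by, the unique orientation-preserving motion $g$ with $g(p_1)=p_3$ and $g(p_2)=p_4$, which is exactly the intersection of the distinct lines $\ell_{p_1,p_3}$ and $\ell_{p_2,p_4}$. Writing $k(g)$ for the number of lines of $\mathfrak{L}$ passing through a point $g \in \mathbb{R}^3$, we therefore have
\[
|Q(P)| \;=\; \sum_{g} k(g)\bigl(k(g) - 1\bigr).
\]
Moreover $k(g) \le N$ for every $g$: if $\ell_{a_1,b_1}, \ldots, \ell_{a_k,b_k}$ all pass through $g$ then the source points $a_i = g^{-1}(b_i)$ are forced to be distinct elements of $P$.

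Before summing, one must verify the hypotheses of Theorem~\ref{introincid}, namely that $\mathfrak{L}$ contains $\lesssim N$ lines in any plane and in any regulus. I expect this to be the principal technical obstacle of the reduction, since it requires a geometric analysis of which pair-configurations in $P$ can yield coplanar or coregular Elekes--Sharir lines. I would isolate this as a separate lemma and attack it via the explicit algebraic form of the parameterization, translating the geometric constraints on $\ell_{a,b}$ into relations on the points $a, b \in P$ and bounding the number of pairs that can satisfy them.

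Once the hypotheses are in place, a dyadic layer-cake argument finishes the proof. Setting $S_j := \{g : k(g) \ge j\}$ and using $k(k-1) = 2\sum_{j=2}^{k}(j-1)$,
\[
|Q(P)| \;=\; 2 \sum_{j=2}^{N} (j-1)\,|S_j| \;\le\; 2\sum_{j=2}^{N} j\,|S_j|.
\]
Theorem~\ref{introincid} gives $|S_j| \lesssim N^3 j^{-2}$ for $2 \le j \le N$, so the right-hand side is bounded by $\lesssim N^3 \sum_{j=2}^N 1/j \lesssim N^3 \log N$, establishing Proposition~\ref{quadbound}.
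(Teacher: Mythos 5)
Your proposal is, in essence, the Elekes--Sharir framework that the paper follows: convert each distance quadruple to a rigid motion, reinterpret the curves $\{g : g(a)=b\}$ as lines in $\mathbb{R}^3$ under a suitable coordinate change, verify the plane/regulus hypotheses, then sum the dyadic layer cake against the bound $|S_j| \lesssim N^3 j^{-2}$. The structure and the decisive identities (including $k(g)\le N$ and $k(k-1) = 2\sum_{j=2}^k (j-1)$) are exactly those in the paper, and the sum $\sum_{j=2}^N j \cdot N^3 j^{-2} \lesssim N^3 \log N$ is the same computation that gives the paper its $\log N$.

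Two details in your reduction need repair, though neither changes the outcome. First, the group of orientation-preserving rigid motions is $\mathbb{R}^2 \times S^1$, not $\mathbb{R}^3$, and the coordinate change that straightens the curves $\ell_{a,b}$ into lines (the paper sends the rotation with center $(x,y)$ and angle $\theta$ to $(x,y,\cot(\theta/2))$) is only defined on the open set of non-translations; each $\ell_{a,b}$ contains exactly one translation, which escapes to infinity under these coordinates. So Theorem~\ref{introincid} controls only the non-translational partial symmetries, and the translations require a separate (but elementary) argument showing $|G_k^{trans}(P)| \lesssim N^3 k^{-2}$, which the paper carries out directly from $p_3-p_1=p_4-p_2$. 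Second, by restricting to distinct pairs $a\neq b$ you get $N(N-1)$ lines, but then the claimed identity $|Q(P)| = \sum_g k(g)(k(g)-1)$ undercounts: a motion $g$ with a fixed point $p_0\in P$ contributes quadruples of the form $(p_0,p_j,p_0,q_j)$ that your $k(g)$ no longer sees (and the identity motion contributes $N(N-1)$ of them). The exact identity holds if you include all $N^2$ pairs, with $\ell_{a,a}$ the vertical line over $a$; alternatively, the missing quadruples all have $p_1=p_3$ or $p_2=p_4$, and there are only $O(N^3)$ of those, so the final bound survives either way. The plane/regulus verification you flag as a separate lemma is indeed the remaining technical point; the paper disposes of it by showing the lines $\{L_{pq}\}_{q}$ with $p$ fixed are pairwise skew (so at most $N$ per plane), and, via a tangent-vector-field argument, that at most two base points $p$ can contribute more than four lines to any one regulus.
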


This Proposition is sharp up to constant factors when $P$ is a square grid (see the appendix).

Elekes and Sharir study $Q(P)$ from a novel point of view related to the symmetries of the plane.  We let $G$ denote
the group of {\it positively oriented} rigid motions of the plane.  The first connection between $Q(P)$ and $G$ comes
from the following simple proposition.

\begin{proposition} \label{rigidmotion} Let $(p_1, p_2, p_3, p_4)$ be a distance quadruple in $Q(P)$.  Then there
is a unique $g \in G$ so that $g(p_1) = p_3$ and $g(p_2) = p_4$.
\end{proposition}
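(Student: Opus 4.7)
The plan is to use complex-number coordinates for the plane, which turns a positively oriented rigid motion into the simple affine map $g(z) = e^{i\theta}z + w$ parameterised by $(\theta, w) \in [0, 2\pi) \times \mathbb{C}$. In these coordinates, both existence and uniqueness reduce to solving a pair of linear equations in $(\theta, w)$, the solvability of which is controlled exactly by the distance condition $d(p_1,p_2) = d(p_3,p_4) \neq 0$.

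First I would translate the hypothesis into coordinates. Identifying $\mathbb{R}^2$ with $\mathbb{C}$ and writing $p_j$ for the corresponding complex number, the requirement $g(p_1) = p_3$ and $g(p_2) = p_4$ becomes the system
\begin{align*}
e^{i\theta} p_1 + w &= p_3, \\
e^{i\theta} p_2 + w &= p_4.
\end{align*}
Subtracting the first from the second gives $e^{i\theta}(p_2 - p_1) = p_4 - p_3$. Because $(p_1,p_2,p_3,p_4)$ is a distance quadruple with nonzero common distance, both sides are nonzero complex numbers of the same modulus. Hence there is a unique unit complex number $e^{i\theta} := (p_4 - p_3)/(p_2 - p_1)$ solving this equation, and then $w := p_3 - e^{i\theta} p_1$ is forced. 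This establishes uniqueness.

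For existence I would simply verify that the $g(z) = e^{i\theta}z + w$ built from the formulas above does the job: by construction $g(p_1) = p_3$, and $g(p_2) = e^{i\theta}(p_2 - p_1) + e^{i\theta}p_1 + w = (p_4 - p_3) + p_3 = p_4$. Since $|e^{i\theta}| = 1$, the map $g$ is a positively oriented rigid motion, i.e.\ an element of $G$.

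I do not expect a real obstacle here; the only subtlety worth flagging is the orientation-preserving hypothesis. If we had allowed all rigid motions (including reflections), there would have been a second solution to $\lambda(p_2 - p_1) = p_4 - p_3$ with $\bar{\lambda}$ in place of $e^{i\theta}$, destroying uniqueness. Restricting to $G$ picks out a single element, and that is precisely why the Elekes--Sharir set-up treats orientation-preserving rigid motions as the natural parameter space for distance quadruples.
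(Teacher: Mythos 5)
Your proof is correct and captures the same idea as the paper's, which decomposes the candidate motion as a translation taking $p_1$ to $p_3$ followed by a rotation about $p_3$, and observes that the distance condition pins down the rotation uniquely; you express exactly this in complex coordinates, where the decomposition becomes $g(z)=e^{i\theta}z+w$ and the distance condition becomes the unique solvability of $e^{i\theta}(p_2-p_1)=p_4-p_3$. The complex-number formulation is a clean way to present the same argument, and your remark about orientation is a correct explanation of why uniqueness would fail in the full isometry group.
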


\begin{proof}  All positively oriented rigid motions taking $p_1$ to $p_3$
can be obtained from the translation from $p_1$ to $p_3$ by applying a rotation $R$ about the point $p_3$.  Since $d(p_3, p_4)
= d(p_1, p_2) >  0$, there is a
unique such rotation sending  $p_2 + p_3 - p_1$ into $p_4$. \end{proof}

Using Proposition \ref{rigidmotion}, we get a map $E$ from $Q(P)$ to $G$, which associates to each distance quadruple $(p_1,
p_2, p_3, p_4) \in Q(P)$, the unique $g \in G$ with $g(p_1) = p_3$ and $g(p_2) = p_4$.  The letter $E$ here stands for
Elekes, who introduced this idea.

Our goal is to use the map $E$ to help us estimate $|Q(P)|$ by counting appropriate rigid motions.  It's important to note that
the map $E$ is not necessarily injective.  The number of quadruples in $E^{-1}(g)$ depends on the size of $P \cap gP$.
We make this precise in the following lemma.

\begin{lemma} \label{inverseimage} Suppose that $g \in G$ is a rigid motion and that $| P \cap g P | = k$.  Then the number of
quadruples in $E^{-1}(g)$ is $2 {k \choose 2}$.
\end{lemma}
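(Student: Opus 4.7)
The plan is to unpack the definition of $E^{-1}(g)$ and show that choosing an element of $E^{-1}(g)$ amounts to choosing an ordered pair of distinct points from a certain $k$-element set, then count.

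First I would observe that in the quadruple $(p_1, p_2, p_3, p_4) \in E^{-1}(g)$, the points $p_3$ and $p_4$ are already determined by $p_1$ and $p_2$: namely $p_3 = g(p_1)$ and $p_4 = g(p_2)$. So specifying an element of $E^{-1}(g)$ is the same as specifying an ordered pair $(p_1, p_2)$ subject to the conditions that $p_1, p_2 \in P$, that $g(p_1), g(p_2) \in P$, and that $d(p_1, p_2) \neq 0$. Because $g$ is an isometry, the equation $d(p_1, p_2) = d(p_3, p_4)$ from \eqref{quadruple} is automatic, so I can drop it.

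Next I would identify the set of admissible $p_1$'s (and $p_2$'s). The conditions $p_1 \in P$ and $g(p_1) \in P$ together say exactly that $p_1 \in P \cap g^{-1}(P)$. Since $g$ is a bijection of ${\bf R}^2$ and $g$ sends $P \cap g^{-1}(P)$ bijectively onto $gP \cap P$, the hypothesis $|P \cap gP| = k$ yields $|P \cap g^{-1}(P)| = k$ as well. The condition $d(p_1, p_2) \neq 0$ is equivalent to $p_1 \neq p_2$.

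Finally, I would count: the number of ordered pairs $(p_1, p_2)$ with $p_1, p_2 \in P \cap g^{-1}(P)$ and $p_1 \neq p_2$ is $k(k-1) = 2 \binom{k}{2}$, which is exactly the claimed count of $|E^{-1}(g)|$. There is no real obstacle here; the only subtle point is being careful to verify that $|P \cap g^{-1}(P)| = |P \cap gP|$ so that the underlying index set for the ordered pair count really does have size $k$.
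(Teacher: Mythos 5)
Your proof is correct and takes essentially the same approach as the paper: both arguments set up a bijection between $E^{-1}(g)$ and ordered pairs of distinct points in a $k$-element set, then count $k(k-1) = 2\binom{k}{2}$. The only cosmetic difference is that you parametrize by $(p_1,p_2) \in P \cap g^{-1}(P)$ whereas the paper parametrizes by $(q_i, q_j) \in P \cap gP$, which costs you the small extra observation that $|P \cap g^{-1}(P)| = |P \cap gP|$.
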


\begin{proof} Suppose that $P \cap gP$ is $\{ q_1, ..., q_k \}$.  Let $p_i = g^{-1}(q_i)$.  Since $q_i$ lies in $gP$, each point $p_i$
lies in $P$.  For any ordered pair $(q_i, q_j)$ with $q_i \not= q_j$, the set $(p_i, p_j, q_i, q_j)$ is a distance quadruple.  This assertion is easy to check.
We have seen that $p_i, p_j, q_i, q_j$ all lie in $P$.  Since $g$ preserves distances, $d(p_i, p_j) = d(q_i, q_j)$.  Since $q_i \not= q_j$,
the distance $d(q_i, q_j) \not= 0$.  

Now we check that every distance quadruple in $E^{-1}(g)$ is of this form.  Let $(p_1, p_2, p_3, p_4)$ be a distance quadruple
in $E^{-1}(g)$.  We know that $g(p_1) = p_3$ and $g(p_2) = p_4$.  So $p_3, p_4$ lie in $P \cap gP$.  Say $p_3 = q_i$ and
$p_4 = q_j$.  Now $p_1 = g^{-1} (p_3) = p_i$ and $p_2 = g^{-1}(p_4) = p_j$.  \end{proof}

Let $G_{=k}(P) \subset G$ be the set of $g \in G$ with $|P \cap gP| = k$.  Notice that $G_{=N}(P)$ is a subgroup of $G$.  It is the group of orientation-preserving
symmetries of the set $P$.  For other $k$, $G_{=k}(P)$ is not a group, but these sets can still be regarded as sets of
``partial symmetries" of $P$.  Since $P$ has $N$ elements, $G_{=k}(P)$ is empty for $k > N$.

By Lemma \ref{inverseimage}, we can count $|Q(P)|$ in terms of $|G_{=k}(P)|$.

$$ |Q(P) | = \sum_{k=2}^N 2 {k \choose 2} | G_{=k}(P) |. $$

Let $G_k(P) \subset G$ be the set of $g \in G$ so that $| P \cap g P | \ge k$.  We see that $|G_{=k}(P)|
= |G_k(P)| - |G_{k+1}(P)|$.  Plugging this into the last equation and rearranging, we get the following.

\begin{equation} \label{quadformula}  |Q(P)| =  \sum_{k=2}^N 2 {k \choose 2} \left( |G_{k}(P)| - |G_{k+1}(P)| \right)  = \sum_{k=2}^N (2k-2) |G_k(P) |. \end{equation}

We will bound the number of partial symmetries as follows.

\begin{proposition} \label{partsymmbound} For any set $P \subset {\bf R^2}$ of $N$ points, and any $2 \le k \le N$,
the size of $G_k(P)$ is bounded as follows

$$ |G_k(P)| \lesssim N^3 k^{-2}. $$

\end{proposition}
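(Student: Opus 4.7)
The plan is to realize $|G_k(P)|$ as a point-line incidence count in $\mathbb{R}^3$ and then apply Theorem \ref{introincid}. For each ordered pair $(p,q) \in P \times P$, the set $\{g \in G : g(p) = q\}$ is a $1$-parameter family inside the $3$-dimensional group $G$. I would fix the Elekes--Sharir parametrization of $G$ by $\mathbb{R}^3$ (for instance, sending a rotation of angle $\theta \ne 0$ about center $(a,b)$ to the point $(a, b, \cot(\theta/2))$, with pure translations handled as a limiting case in a suitable plane), chosen precisely so that each of these $1$-parameter families becomes an honest straight line $\ell_{p,q}$. This produces a collection $\mathfrak{L}$ of $N^2$ distinct lines in $\mathbb{R}^3$.

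Next, I would match the two sides of the correspondence. By definition, $g \in G_k(P)$ means there are at least $k$ distinct points $q_1,\dots,q_k \in P \cap gP$; writing $p_i := g^{-1}(q_i) \in P$, the point $g$ lies on each of the $k$ pairwise distinct lines $\ell_{p_i, q_i}$. Hence $|G_k(P)|$ is bounded above by the number of points of $\mathbb{R}^3$ lying in at least $k$ lines of $\mathfrak{L}$, which is exactly the quantity estimated by Theorem \ref{introincid}.

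To invoke that theorem I must verify the structural hypothesis: no plane and no regulus contains more than $\lesssim N$ lines of $\mathfrak{L}$. This is where the main work lies, and I expect it to be the central obstacle. The key is to understand what it means geometrically for a $2$-dimensional algebraic subvariety $V$ of the Elekes--Sharir parameter space to contain many lines $\ell_{p_i, q_i}$: the rigid motions parametrized by $V$ form a $2$-parameter subfamily of $G$ simultaneously sending many points $p_i \in P$ into points $q_i \in P$. I would classify the $2$-parameter subfamilies of $G$ whose Elekes--Sharir lifts are planes or reguli, show that each such family corresponds to a concrete underlying symmetry of $P$ (e.g., reflection across some axis, or rotations sharing a common center, or translations along a fixed direction), and then use a direct pigeonhole on $P$ (together with $|P| = N$) to conclude that at most $O(N)$ pairs $(p_i, q_i)$ drawn from $P \times P$ can be so related. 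The plane case should be reasonably transparent from the explicit formula for $\ell_{p,q}$; the regulus case is more delicate and will exploit the quadratic nature of reguli together with the specific algebraic form of the Elekes--Sharir lift.

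Once the plane/regulus count is established, the proposition follows immediately: Theorem \ref{introincid} applied to $\mathfrak{L}$ with $|\mathfrak{L}| = N^2$ gives $|G_k(P)| \lesssim N^3 k^{-2}$ for all $2 \le k \le N$.
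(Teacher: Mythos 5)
Your overall framework matches the paper exactly: lift $G$ to $\mathbb{R}^3$ via the $\cot(\theta/2)$ parametrization, turn the curves $S_{pq}$ into lines $L_{pq}$, identify $G_k(P)$ with the set of $k$-rich points, and invoke Theorem~\ref{introincid}. But two pieces of the proposal are not actually worked out, and one is misdescribed.

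First, the translations. You say they are ``handled as a limiting case in a suitable plane,'' but that is not how it works: under $\rho(x,y,\theta)=(x,y,\cot(\theta/2))$ the translations correspond to $\theta\to 0$, i.e.\ $\cot(\theta/2)\to\infty$, so they do not land anywhere in $\mathbb{R}^3$. The paper deals with this by splitting $G_k(P)=G_k'(P)\cup G_k^{trans}(P)$ and bounding $|G_k^{trans}(P)|\lesssim N^3k^{-2}$ by a completely separate elementary count (Lemma~\ref{transbound}): a translation quadruple is determined by three of its four points, giving at most $N^3$ quadruples, and one then divides by $2\binom{k}{2}$. This step is easy, but it does need to be done separately rather than folded into the line picture.

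Second, and more seriously, the plane/regulus hypothesis (Proposition~\ref{genericity}) is where the real work is, and your sketch --- classify the two-parameter subfamilies of $G$ whose lifts are planes or reguli and match them to ``underlying symmetries of $P$'' (reflections, common centers, common translation direction) --- is both vague and pointed in the wrong direction. For one thing, $G$ consists of orientation-preserving motions, so reflections are not even in it. More importantly, the paper's actual argument does not classify anything by symmetry of $P$; it exploits the structure of the families $\mathfrak{L}_p=\{L_{pq}\}_{q\in P}$ for a \emph{fixed} source point $p$. For planes this is immediate: the lines in $\mathfrak{L}_p$ are pairwise skew (they have distinct directions by \eqref{Param} and they cannot intersect since $S_{pq}\cap S_{pq'}=\emptyset$ for $q\ne q'$), so a plane contains at most one line from each $\mathfrak{L}_p$, hence at most $N$ lines of $\mathfrak{L}$. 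For reguli the argument is genuinely harder and not a pigeonhole at all: one builds a degree-$2$ polynomial vector field $V$ tangent to the lines of $\mathfrak{L}_p'$, applies Bezout to $f$ and $V\cdot\nabla f$ to show that a regulus containing five lines of $\mathfrak{L}_p'$ must contain a full ruling from $\mathfrak{L}_p'$ (Lemma~\ref{Lpruled}), and then uses disjointness of $\mathfrak{L}_{p_1}'$ and $\mathfrak{L}_{p_2}'$ plus the fact that a regulus has only two rulings. As you wrote, you ``expect it to be the central obstacle'' --- it is, and your proposal stops exactly where the difficulty begins, so the plane/regulus verification remains a genuine gap.
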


When $P$ is a square grid, this estimate is sharp up to constant factors for all $2 \le k \le N$ (see the appendix).  
Plugging this bound into equation \ref{quadformula}, we get $|Q(P) | \lesssim N^3 \log N$, proving Proposition \ref{quadbound}.
This in turn implies our main theorem, Theorem \ref{main}.  So it suffices to prove Proposition \ref{partsymmbound}.

Next Elekes and Sharir related the sets $G_k(P)$ to an incidence problem involving certain curves in $G$.
For any points $p, q \in {\bf R^2}$, define the set $S_{pq}  \subset G$ given by

$$S_{pq} = \{  g \in G:  g(p) = q \}.$$

Each $S_{pq}$ is a smooth 1-dimensional
curve in the 3-dimensional Lie group $G$.  The sets $G_k(P)$ are closely related to the curves $S_{pq}$.

\begin{lemma} \label{gkincid} A rigid motion $g$ lies in $G_k(P)$ if and only if it lies in at least $k$ of the curves
$\{ S_{pq} \}_{p,q \in P}$.

\end{lemma}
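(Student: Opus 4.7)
The plan is to establish the equivalence by setting up an explicit bijection between the set $P \cap gP$ and the set of pairs $(p,q) \in P \times P$ with $g \in S_{pq}$. Once this bijection is in place, the statement that $|P \cap gP| \ge k$ is by definition the statement that $g$ lies in at least $k$ of the curves $S_{pq}$ with $p, q \in P$.

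First I would unwind the definitions. A point $q$ lies in $P \cap gP$ precisely when $q \in P$ and $q = g(p)$ for some $p \in P$; the element $p$ is unique because $g$ is a bijection of $\mathbb{R}^2$. So I would define a map $\phi$ from $P \cap gP$ to the set of pairs $(p,q) \in P \times P$ with $g \in S_{pq}$ by $\phi(q) := (g^{-1}(q), q)$. This is well-defined: the first coordinate lies in $P$ by the characterization of $P \cap gP$, and $g$ sends $g^{-1}(q)$ to $q$, so $g \in S_{g^{-1}(q),q}$. Conversely, I would define $\psi(p,q) := q$; the output lies in $P \cap gP$ because $q \in P$ by hypothesis and $q = g(p) \in gP$ since $g \in S_{pq}$. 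A direct check shows $\psi \circ \phi = \mathrm{id}$ and $\phi \circ \psi = \mathrm{id}$, using the defining relation $g(p) = q$ in the second case to identify $p$ with $g^{-1}(q)$.

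To convert the count of pairs into a count of curves, I would observe that distinct pairs in $P \times P$ index distinct curves $S_{pq}$. Indeed, if $p = p'$ and $q \ne q'$, then $S_{pq} \cap S_{pq'} = \emptyset$ because no function sends a single point to two different images; and if $p \ne p'$, then by composing any $g \in S_{pq}$ with a small rotation fixing $p$, one obtains elements of $S_{pq}$ sending $p'$ to arbitrarily many different targets, so $S_{pq}$ cannot coincide with any single $S_{p'q'}$. Hence the number of pairs $(p,q) \in P \times P$ with $g \in S_{pq}$ equals the number of curves in $\{S_{pq}\}_{p,q \in P}$ containing $g$, and by the bijection above this number equals $|P \cap gP|$. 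The lemma then follows immediately from the definition of $G_k(P)$. There is no real obstacle here; the only point to be slightly careful about is that the curves are indexed by ordered pairs and that this indexing is injective.
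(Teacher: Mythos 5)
Your proposal is correct and takes essentially the same route as the paper: the paper's forward direction assigns to each $q_i \in P \cap gP$ the pair $(g^{-1}(q_i), q_i)$, and its backward direction recovers the $q_i$ from distinct pairs using that $g$ is a bijection, which is exactly your $\phi$ and $\psi$ written out as a two-way correspondence. The one place you are more careful than the paper is in explicitly verifying that distinct pairs $(p,q)$ index distinct curves $S_{pq}$ (the rotation argument for $p \ne p'$); the paper asserts the distinctness of the $k$ curves without comment, so this is a welcome bit of extra rigor but not a different argument.
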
 

\begin{proof} First suppose that $g$ lies in $G_k(P)$.  By definition, $|P \cap gP | \ge k$.  Let $q_1, ..., q_k$ be
distinct points in $P \cap gP$.  Let $p_i = g^{-1}(q_i)$.  Since $q_i \in g P$, we see that $p_i$ lies in $P$.  Since
$g(p_i) = q_i$, we can say that $g$ lies in $S_{p_i q_i}$ for $i = 1, ..., k$.  Since the $q_i$ are all distinct, these
are $k$ distinct curves.

On the other hand, suppose that $g$ lies in the curves $S_{p_1 q_1}, ..., S_{p_k q_k}$, where we assume that the pairs $(p_1, q_1), ...
, (p_k, q_k)$ are all distinct.  We claim that $q_1, ..., q_k$ are distinct points.  To see this, suppose that $q_i = q_j$.  
Since $g$ is a bijection, we see that $p_i = g^{-1} (q_i) = g^{-1} (q_j) = p_j$, and this gives a contradiction.  But the points $q_1, ..., q_k$
all lie in $P \cap g P$.  \end{proof}

Bounding $G_k(P)$ is a problem of incidence geometry about the curves $\{ S_{pq} \}_{p,q \in P}$ in the group $G$.  By making
a careful change of coordinates, we can reduce this problem to an incidence problem for lines in ${\bf R^3}$.
(Our change of coordinates is slightly nicer than the one in \cite{ES}.  In the coordinates of \cite{ES}, the curves $\{ S_{pq} \}$ become
helices - a certain of class of degree 2 curves in ${\bf R^3}$.)

Let $G^{\prime}$ denote the open subset of the orientable rigid motion group $G$ given by rigid motions which are
not translations.  We can write $G$ as a disjoint union $G^{\prime} \cup G^{trans}$,  where $G^{trans}$ denotes the translations.
We then divide $G_k(P) = G_k^{\prime} \cup G_k^{trans}$.  Translations are a very special class of rigid motions, and
it is fairly easy to bound $|G_k^{trans}(P)| \lesssim N^3 k^{-2}$.  We carry out this minor step at the end of this Section.  
The main point is to bound $|G_k^{\prime}(P)|$.  To do this, we pick a nice set of coordinates $\rho: G' \rightarrow {\bf R^3}$.

Each element of $G^{\prime}$ has a unique fixed point $(x,y)$ and an angle $\theta$ of rotation about the
fixed point with $0 < \theta < 2\pi$.  We define the map

$$\rho :  G'  \longrightarrow  {\bf R}^3$$

by

$$\rho(x,y,\theta)=(x,y,\cot {\theta \over 2}).$$

\begin{proposition} \label{they're lines} Let $p=(p_x,p_y)$ and $q=(q_x,q_y)$ be points in ${\bf R}^2$. Then
with $\rho$ as above, the set $\rho(S_{pq} \cap G^{\prime})$ is a line in ${\bf R}^3$.
\end{proposition}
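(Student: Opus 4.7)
The plan is to parametrize $S_{pq} \cap G'$ by a single real variable $t$ and check that each coordinate of $\rho$ is an affine function of $t$.

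First I would use the geometric characterization of rotations. Any non-trivial orientation-preserving rigid motion $g \in G'$ is a rotation about its unique fixed point $(x,y)$ through an angle $\theta \in (0, 2\pi)$, and $g$ sends $p$ to $q$ if and only if $(x,y)$ is equidistant from $p$ and $q$, i.e.\ lies on the perpendicular bisector $\ell_{pq}$ of the segment $\overline{pq}$. Conversely, for any point $(x,y) \in \ell_{pq}$ (distinct from the case $p=q$, handled below) there is a unique $\theta \in (0,2\pi)$ such that rotation about $(x,y)$ by $\theta$ sends $p$ to $q$. Thus the map $(x,y) \mapsto (x,y,\theta)$ identifies $S_{pq} \cap G'$ with the line $\ell_{pq} \subset \mathbb{R}^2$.

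Next I would introduce coordinates adapted to $\ell_{pq}$. Let $m = (p+q)/2$ and let $n$ be the unit vector obtained by rotating $q-p$ by $+\pi/2$. Parametrize $\ell_{pq}$ by signed distance from $m$, writing $(x,y) = m + tn$ for $t \in \mathbb{R}$. It remains to express $\cot(\theta/2)$ in terms of $t$. The point $(x,y)$ is the apex of an isoceles triangle with base $\overline{pq}$, so the half-angle $\theta/2$ at the apex satisfies $\tan(\theta/2) = (|q-p|/2)/|t|$ in magnitude; chasing the orientation convention (e.g., checking that $t \to +\infty$ forces $\theta \to 0^+$ and $t \to -\infty$ forces $\theta \to 2\pi^-$) gives the signed identity
\[
\cot(\theta/2) \;=\; \frac{2t}{|q-p|}.
\]
Substituting into $\rho$ yields
\[
\rho(x,y,\theta) \;=\; \bigl(m_x + t n_x,\; m_y + t n_y,\; 2t/|q-p|\bigr),
\]
which is affine in $t$ with nonzero direction vector $(n_x, n_y, 2/|q-p|)$. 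So $\rho(S_{pq} \cap G')$ is a line.

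Finally I would handle the degenerate case $p = q$ separately: here $S_{pp} \cap G'$ consists of all non-trivial rotations fixing $p$, so $\rho$ carries it to $\{(p_x, p_y, s) : s \in \mathbb{R}\}$ as $\theta$ ranges over $(0,2\pi)$ and $\cot(\theta/2)$ ranges over $\mathbb{R}$; this is the vertical line through $(p_x, p_y, 0)$.

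There is no serious obstacle here; the proposition is essentially a computation, and the whole point of choosing the third coordinate to be $\cot(\theta/2)$ rather than $\theta$ itself is precisely to turn the dependence on the position along $\ell_{pq}$ into a linear function. The only care needed is bookkeeping of the orientation to get the sign of $\cot(\theta/2)$ right as $(x,y)$ crosses $m$.
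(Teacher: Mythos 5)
Your proof is correct and follows the same route as the paper: observe that the fixed point of any $g\in S_{pq}\cap G'$ lies on the perpendicular bisector of $\overline{pq}$, parametrize that bisector by a real parameter $t$, and check that all three coordinates of $\rho$ — in particular $\cot(\theta/2)$ — are affine in $t$. The paper simply writes down the explicit parametrization and says ``the reader will easily verify,'' whereas you actually carry out the half-angle computation, flag the orientation convention that fixes the sign of $\cot(\theta/2)$, and handle the degenerate case $p=q$ separately; this is just a fleshed-out version of the same argument.
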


\begin{proof}  Noting that the fixed point of any transformation taking $p$ to $q$ must lie on the
perpindicular bisector of $p$ and $q$, the reader will easily verify that the set $\rho(S_{pq} \cap G^{\prime})$
can be parametrized as
\begin{equation} \label{Param} ({p_x + q_x \over 2}, {p_y + q_y \over 2},0)  +
t  (  {q_y -p_y \over 2}, {p_x-q_x \over 2}, 1).  \end{equation}

\end{proof}

For any $p,q \in {\bf R^2}$, let $L_{pq}$ denote the line $\rho(S_{pq} \cap G^{\prime})$.  The line $L_{pq}$ is parametrized by
equation \ref{Param}.  Let $\frak L$ be the set of lines $\{ L_{pq} \}_{p, q \in P}$.  By examining the
parametrization in equation \ref{Param}, it's easy to check that these are $N^2$ distinct lines.  If $g$ lies in
$G_k'(P)$, then $\rho(g)$ lies in at least $k$ lines of $\frak L$.  In the remainder of the paper, we will
study the set of lines $\frak L$ and estimate the number of points lying in $k$ lines.

We would like to prove that there are $\lesssim N^3 k^{-2}$ points that lie in at least $k$ lines of $\frak L$.  
Such an estimate does not hold for an arbitrary set of $N^2$ lines.  For example, if all the lines of $\frak L$
lie in a plane, then one may expect $\sim N^4$ points that lie in at least 2 lines.  This number of intersection
points is far too high.  There is another important example, which occurs when all the lines lie in a regulus.  
Recall that a regulus is a doubly-ruled surface, and each line from one ruling intersects all the lines from the
other ruling.  If $\frak L$ contained $N^2/2$ lines in each of the rulings, then we would have $\sim N^4$ points that lie in at least
2 lines.  Because of this example, we have to show that not too many lines of $\frak L$ lie in a plane or a regulus.

\begin{proposition} \label{genericity} 
No more than $N$ lines of $\frak L$ lie in a single plane.  No more than $O(N)$ lines of
$\frak L$ lie in a single regulus. \end{proposition}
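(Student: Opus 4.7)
My plan is to translate the conditions $L_{pq}\subset\Pi$ and $L_{pq}\subset R$ into polynomial equations in the four coordinates of $(p,q)\in\mathbb{R}^2\times\mathbb{R}^2$, and then to bound the size of the resulting algebraic locus intersected with $P\times P$. A first useful remark is that the map $(p,q)\mapsto L_{pq}$ is injective: every rigid motion on the $1$-parameter family $L_{pq}$ may be written as $g_\theta(x)=R_\theta(x-p)+q$, and if $L_{pq}=L_{p'q'}$ then $g_\theta(p')=q'$ for every $\theta$, which forces $R_\theta(p'-p)=q'-q$ for all $\theta$ and hence $p=p'$, $q=q'$. Thus a bound on the number of pairs in $P\times P$ satisfying the equations translates directly into a bound on the number of lines of $\frak L$ in $\Pi$ or $R$.

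For a plane $\Pi=\{ax+by+cz=d\}$, substituting the parametrization \eqref{Param} into the defining equation and equating the coefficients of $t^0$ and $t^1$ to zero gives the two linear conditions
\begin{align*}
a(p_x+q_x)+b(p_y+q_y) &= 2d,\\
b(p_x-q_x)-a(p_y-q_y) &= -2c.
\end{align*}
If $(a,b)=(0,0)$ the plane is horizontal, and no $L_{pq}$ can lie in it because every $L_{pq}$ has direction vector with $z$-component $1$. Otherwise the linear map $p\mapsto(ap_x+bp_y,\;bp_x-ap_y)$ has determinant $-(a^2+b^2)\ne 0$, so the two equations determine $p$ uniquely from $q$. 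Hence at most $|P|=N$ pairs satisfy them, giving at most $N$ lines of $\frak L$ inside $\Pi$.

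For a regulus $R$ cut out by a polynomial $F$ of degree $2$, the same substitution yields $F$ evaluated along the parametrization of $L_{pq}$, a polynomial in $t$ of degree $\le 2$ whose three coefficients are polynomials of degree $\le 2$ in $(p_x,p_y,q_x,q_y)$. Setting them all to zero defines a real algebraic variety $V\subset\mathbb{R}^4$. By the injectivity above combined with the fact that $R$ contains only the lines of its two rulings (a $1$-dimensional family), $\dim V\le 1$, and standard bounds on the real zero set of three quadrics in $\mathbb{R}^4$ show that $V$ has at most $O(1)$ irreducible components.

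To bound $|V\cap(P\times P)|$, I handle each irreducible component $C$ of $V$ separately via the projection $\pi_p:C\to\mathbb{R}^2$ onto the first factor. If $\pi_p(C)$ is a single point $p^\ast$, then $C$ contributes at most $|P|=N$ pairs (the one point $p^\ast$ paired with at most $|P|$ choices of $q$). If $\pi_p(C)$ is a $1$-dimensional algebraic curve $\gamma_p$, then $\pi_p\colon C\to\gamma_p$ is a generically finite map of degree bounded by a constant depending only on $\deg R$, so $C$ contributes at most $O(|P\cap\gamma_p|)=O(N)$ pairs. Summing over the constantly many components gives the desired $O(N)$ bound. The main step to justify carefully is the dimension and component count for $V$; this is a routine application of real-algebraic-geometry bounds for three quadrics in four variables, combined with the injectivity of $(p,q)\mapsto L_{pq}$ established at the outset.
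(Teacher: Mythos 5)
Your proof is correct and takes a genuinely different route from the paper's, so let me compare the two.

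For the plane case, your argument is a direct linear-algebra computation: you substitute the parametrization \eqref{Param} into the plane equation, get two linear conditions in $(p,q)$, and observe that the $2\times 2$ matrix $\left(\begin{smallmatrix}a & b\\ b & -a\end{smallmatrix}\right)$ is invertible when $(a,b)\neq(0,0)$, so $p$ is a function of $q$. The paper instead fixes $p$ and shows that the lines in $\frak{L}_p=\{L_{pq}\}_{q\in P}$ are pairwise skew (disjoint, with distinct directions), so at most one line from each $\frak{L}_p$ can lie in the plane. These are dual bookkeeping schemes and either works cleanly.

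For the regulus case the approaches diverge more substantially. The paper builds, for each fixed $p$, an explicit polynomial vector field $V$ of degree $\le 2$ tangent to the fibration $\frak{L}_p'=\{L_{pq}\}_{q\in\mathbb{R}^2}$ and argues via Lemma~\ref{Bezoutlines} that if a regulus $R$ contains five lines of $\frak{L}_p'$ then a whole ruling of $R$ sits inside $\frak{L}_p'$; since $R$ has only two rulings and the families $\frak{L}_{p_1}'$, $\frak{L}_{p_2}'$ are disjoint for $p_1\neq p_2$, at most two values of $p$ contribute more than four lines, giving $\le 2N+4(N-2)$ total. Your version instead sets up the incidence variety $V\subset\mathbb{R}^4$ of pairs $(p,q)$ with $L_{pq}\subset R$, uses injectivity of $(p,q)\mapsto L_{pq}$ plus the fact that the lines of $R$ form a one-dimensional family to get $\dim V\le 1$, and then counts $V\cap(P\times P)$ component by component. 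This is more ``soft'' and general: it would apply equally to any fixed algebraic surface whose family of lines is one-dimensional and of bounded degree, not just reguli. What the paper's construction buys in return is self-containedness --- it needs only the statement of Lemma~\ref{Bezoutlines}, whereas your argument leans on a few facts (the variety cut out by three quadrics in $\mathbb{R}^4$ has $O(1)$ irreducible components each of degree $O(1)$, and the fiber of the projection $\pi_p$ over any point of $\gamma_p$ has size bounded by the degree of the component) that are standard but would need to be stated and cited with more care, especially because one is working over $\mathbb{R}$ rather than $\mathbb{C}$. The one place I would ask you to tighten is the phrase ``generically finite map of degree bounded by a constant'': you are implicitly invoking a uniform fiber bound over \emph{all} points of $\gamma_p$, not just generic ones; this does follow because an irreducible curve of bounded degree meets any affine $2$-plane not containing it in boundedly many points, but you should say so, as ``generically finite'' alone does not give the pointwise bound you need.
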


\begin{proof}  For each $p \in P$, we consider the subset $\frak L_p \subset \frak L$ given by
$$\frak L_p = \{ L_{pq} \}_{q \in P}.$$

Notice that if $q \not= q'$, then $L_{pq}$ and $L_{p q'}$ cannot intersect.  So the lines of
$\frak L_p$ are disjoint.  From
equation \ref{Param}, it follows that the lines of $\frak L_p$ all have different directions.  So the lines
of $\frak L_p$ are pairwise skew, and no two of them lie in the same plane. Therefore, any plane
contains at most $N$ lines of $\frak L$.

The situation for reguli is more complicated because all $N$ lines of $\frak L_p$ may lie in
a single regulus.  But we will prove that this can only occur for at most two values of $p$.
To formulate this argument, we define $\frak L_p' := \{ L_{pq} \}_{q \in {\bf R^2}}$, so that $\frak L_p \subset \frak L_p'$.

\begin{lemma} \label{Lpruled} Suppose that a regulus $R$ contains at least five lines of
$\frak L_p'$.  Then all the lines in one ruling of $R$ lie in $\frak L_p'$.
\end{lemma}

Given this lemma, the rest of the proof of Proposition \ref{genericity} is straightforward.  
 If a regulus $R$ contains at least five lines of $\frak L_p$, then all the lines in one ruling
of $R$ lie in $\frak L_p'$.  But if $p_1 \not= p_2$, then $\frak L'_{p_1}$ and $\frak L'_{p_2}$ 
are disjoint, which we can check from the explicit formula
in equation \ref{Param}.   Since a regulus has only two rulings, there are at most two values of $p$ such that $R$ contains $\ge 5$ lines of $\frak L_p$.  
These two values of $p$ contribute $\le 2N$ lines of $\frak L$ in the surface $R$.  The other $N-2$ values of
$p$ contribute at most $4 (N-2)$ lines of $\frak L$ in the surface $R$.  
Therefore, the surface $R$ contains at most $2 N + 4 (N-2) \lesssim N$ lines of  $\frak L$. 

\begin{proof}[Proof of Lemma \ref{Lpruled}]  We fix the value of $p$.  We'll check below
that each point of ${\bf R^3}$ lies in exactly one line of $\frak L_p'$.  We will construct a non-vanishing vector field
$V = (V_1, V_2, V_3)$ on ${\bf R^3}$ tangent to the lines of $\frak L_p'$.  
Moreover, the coefficients $V_1, V_2$ and $V_3$ are all polynomials in $(x,y,z)$ of degree $\le 2$.
This construction is slightly tedious, but straightforward.  We postpone it to the end of the proof.

The regulus $R$ is defined by an irreducible polynomial $f$ of degree 2.  
Now suppose that a line $L_{pq}$ lies in $R$.  At each point $x \in L_{pq}$, the vector $V(x)$ points tangent to the line $L_{pq}$, and so
the directional derivative of $f$ in the direction $V(x)$ vanishes at the point $x$.  In other words the dot product 
$V \cdot \nabla f$ vanishes on the
line $L_{pq}$.  Since $f$ has degree 2, the dot product $V \cdot \nabla f$ is a degree 2 polynomial.

Suppose that $R$ contains five lines of $\frak L_p'$.  We know that $f$ vanishes on each line, and the previous paragraph
shows that $V \cdot \nabla f$ vanishes on each line.  By Bezout's theorem (see Lemma \ref{Bezoutlines}), $f$ and $V \cdot \nabla f$ 
must have a common factor.  Since $f$ is irreducible, we must have that $f$ divides $V \cdot \nabla f$.  In other words, $V \cdot \nabla
f$ vanishes on the surface $R$, and so $V$ is tangent to $R$ at every point of $R$.  If $x$ denotes any point in $R$, and we let $L$ be the line of $\frak L_p'$ containing $x$, then
we see that this line lies in $R$.  In this way, we get a ruling of $R$ consisting of lines from $\frak L_p'$.

It remains to define the vector field $V$.  We begin by checking that each point $(x,y,z)$ lies in exactly one line of
$\frak L_p'$.  By equation \ref{Param}, $(x,y,z)$ lies in $L_{pq}$ if and only if the following equation holds for some $t$.

$$ ({p_x + q_x \over 2}, {p_y + q_y \over 2},0)  +
t  (  {q_y -p_y \over 2}, {p_x-q_x \over 2}, 1) = (x, y, z). $$

Given $p$ and $(x,y,z)$, we can solve uniquely for $t$ and $(q_x, q_y)$.  First of all, we see that $t = z$.  Next we get
a matrix equation of the following form:

$$\left( \begin{array}{cc}
1 & z \\
-z & 1 \end{array} \right)   \left( \begin{array}{cc} q_x \\ q_y \end{array} \right) = a(x,y,z) . $$

\noindent In this equation, $a(x,y,z)$ is a vector whose entries are polynomials in $x,y, z$ of degree $\le 1$.  (The polynomials also depend on
$p$, but since $p$ is fixed, we suppress the dependence.)  Since the determinant of the matrix on the left-hand side is $1 + z^2 > 0$, we can solve this equation for $q_x$ and $q_y$.  The solution has
the form

\begin{equation} \label{qeq}  \left( \begin{array}{cc} q_x \\ q_y \end{array} \right) = (z^2 + 1)^{-1} b(x,y,z). \end{equation}

\noindent In this equation, $b(x,y,z)$ is a vector whose entries are polynomials in $x,y,z$ of degree $\le 2$.

The vector field $V(x,y,z)$ is $(z^2 + 1)  (  {q_y -p_y \over 2}, {p_x-q_x \over 2}, 1)$.  Recall that $p$ is fixed, and $q_x$ and $q_y$
can be expressed in terms of $(x,y,z)$ by the equation above.  By equation \ref{Param}, this
vector field is tangent to the line $L_{pq}$.  After multiplying out, the third entry of $V$ is $z^2 + 1$, so $V$ is non-vanishing.  Plugging
in equation \ref{qeq} for $q_x$ and $q_y$ and multiplying out, we see that the entries of $V(x,y,z)$ are polynomials
of degree $\le 2$.  \end{proof}

This concludes the proof of Proposition \ref{genericity}.  \end{proof}

We have now connected the distinct distance problem to the incidence geometry problem we mentioned in the introduction.  We know
that $\frak L$ consists of $N^2$ lines with $\lesssim N$ lines in any plane or regulus.  We now state our two results on incidence
geometry.

\begin{theorem}  \label{theorem1}  Let $\frak L$ be any set of $N^2$ lines in ${\bf R}^3$ for which no more than $N$ lie
in a common plane and no more than $O(N)$ lie in a common regulus.  Then the number of points of
intersection of two lines in $\frak L$ is $O(N^3)$.
\end{theorem}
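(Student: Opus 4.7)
My plan follows the ruled-surfaces strategy outlined in the introduction.

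First, by a standard parameter count, there exists a nonzero polynomial $p$ of degree $D \leq C_{0} N$ vanishing on every line of $\frak L$: the space of degree-$D$ polynomials in three variables has dimension $\binom{D+3}{3} \sim D^{3}$, while requiring vanishing along a single line imposes only $D+1$ linear conditions, so $D$ proportional to $N$ suffices to kill $N^{2}$ lines. Decompose $p = q_{1} \cdots q_{m}$ into distinct irreducible factors of degrees $d_{j}$, with $\sum_{j} d_{j} \leq D$, and let $L_{j}$ denote the number of lines of $\frak L$ contained in $Z(q_{j})$.

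Second, to each $q_{j}$ I attach Salmon's flecnode polynomial $\operatorname{Flec}(q_{j})$ of degree $11 d_{j} - 24$, which vanishes at every point of $Z(q_{j})$ admitting a line in fourth-order contact, and so in particular on every line contained in $Z(q_{j})$. Bezout applied to $q_{j}$ and $\operatorname{Flec}(q_{j})$ gives the dichotomy: either $Z(q_{j})$ contains only $O(d_{j}^{2})$ lines, or $q_{j} \mid \operatorname{Flec}(q_{j})$, in which case the classical Cayley--Salmon theorem forces $Z(q_{j})$ to be a ruled surface.

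Third, I further classify each ruled irreducible factor as a plane, a regulus, or a singly-ruled surface. A plane factor carries at most $N$ lines of $\frak L$ by hypothesis, contributing $O(N^{2})$ intersection points; a regulus factor carries $O(N)$ lines, again contributing $O(N^{2})$; since there are at most $D \lesssim N$ linear factors and at most $D/2 \lesssim N$ quadratic factors, these two families together contribute $O(N^{3})$. For a singly-ruled factor of degree $d_{j}$, the decisive structural fact announced in the introduction is that through each regular point of the surface passes exactly one line of the surface, so two lines of $\frak L$ lying inside $Z(q_{j})$ can meet only on the critical locus of $q_{j}$, a singular curve of degree $O(d_{j}^{2})$. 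Separating the few exceptional lines of $\frak L$ that happen to lie inside the critical curve from the rest — the former treated directly, the latter constrained by Bezout against the critical curve — should yield a per-component bound of the form $O(L_{j} d_{j})$, which sums to $O(N^{3})$ via $\sum_{j} L_{j} \lesssim N^{2}$ and $\max_{j} d_{j} \leq D \lesssim N$.

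The main obstacle is the singly-ruled case. A naive Bezout count gives each line of $\frak L$ up to $O(d_{j}^{2})$ intersections with the critical curve and hence $O(L_{j} d_{j}^{2})$ per component, which sums only to $O(N^{4})$ and is insufficient; one must exploit the rigid one-parameter structure of the ruling, and the fact that the critical curve is crossed by only a bounded family of rulings at a time, to sharpen the estimate by a factor of $d_{j}$. A subsidiary bookkeeping issue, handled by further Bezout comparisons between distinct factors $q_{j}$, is that a single line of $\frak L$ can lie in more than one irreducible component $Z(q_{j})$, so the components must be reassembled carefully to avoid double counting.
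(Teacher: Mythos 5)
Your high-level plan --- auxiliary polynomial, Cayley--Salmon/flecnode dichotomy, separate treatment of planes, reguli, and singly-ruled components --- matches the paper's strategy, but there are three substantive gaps that, as written, would sink the argument.

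\textbf{The unruled components are not handled.} With a polynomial of degree $D \sim N$, the Cayley--Salmon dichotomy tells you that an unruled irreducible factor $q_j$ contains $O(d_j^2)$ lines, and $\sum_j d_j^2 \le D^2 \sim N^2$. So the unruled part of $Z(p)$ can contain \emph{all} $N^2$ lines of $\frak L$, and you have said nothing about their mutual intersections; the bound $O(d_j^2)$ on the number of lines does not translate into a bound on intersections inside $Z(q_j)$. This is precisely why the paper does \emph{not} take a polynomial of degree $\sim N$. Instead it runs a degree-reduction argument (restrict to lines with $\gtrsim QN$ intersections, take a random thinning, interpolate) to obtain a polynomial of degree $O(N/\sqrt{Q})$ vanishing on the relevant lines. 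Then the unruled factor contains only $O(N^2/Q)$ lines, and the paper closes the loop by a contradiction/optimality argument (it assumes an extremal counterexample with $QN^3$ intersections and bootstraps): after cleaning out any planes or reguli that are now over-populated relative to the smaller line count, the residual family violates optimality. Without this bootstrapping step, the degree-$N$ approach stalls.

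\textbf{The singly-ruled intersection count is wrong as stated.} It is not true that through every regular point of an irreducible singly-ruled surface there passes exactly one line of the surface; ``singly-ruled'' only says this for a \emph{generic} point. The paper's Lemma~\ref{exceptional} and Corollary~\ref{takingexception} isolate the exceptional structure: each irreducible non-planar, non-regulus component has at most one exceptional point (a cone point, through which infinitely many generators pass) and at most two exceptional lines (met by infinitely many generators at non-exceptional points). These contribute $\lesssim N^3$ intersections by a direct count. For the remaining non-exceptional lines meeting at non-exceptional points, the paper does \emph{not} invoke the critical locus at all. Instead it uses Salmon's plane-section argument: a plane $\pi$ through a generator $l$ meets $Z(p)$ in $l$ together with a residual curve $c$ of degree $\deg p - 1 < N$, and a continuity argument with nearby generators forces every admissible intersection point on $l$ to lie in $l \cap c$. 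This gives at most $N-1$ such intersections per line, hence $O(N^3)$ overall. Your proposed route through the critical curve is both incorrect (regular non-critical points can carry two generators near the exceptional loci) and, as you yourself note, off by a factor of $d_j$ in the naive Bezout count; the plane-section argument bypasses the issue entirely and delivers the needed linear-in-degree bound per line.

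\textbf{The ``vague fix'' is the missing theorem.} You flag that the critical-curve count gives $O(L_j d_j^2)$ and that you need $O(L_j d_j)$, and you gesture at exploiting the rigidity of the ruling. That gain of a factor of $d_j$ is not a bookkeeping refinement --- it is the content of Lemma~\ref{ruled}, and the Salmon argument above is how one actually obtains it. Without it, the singly-ruled case contributes $O(N^4)$ and the theorem fails.

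In short: the flecnode idea and the plane/regulus hypothesis are used correctly, but the proof needs (i) degree reduction and an inductive/optimality closing argument to dispose of the unruled part, and (ii) the exceptional-point/exceptional-line structure together with Salmon's plane-section count to handle singly-ruled components.
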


\begin{theorem} \label{theorem2}  Let $\frak L$ be any set of $N^2$ lines in ${\bf R}^3$ for which no more than $N$ lie
in a common plane, and let $k$ be a number $3 \leq k  \leq N$.  Let $\frak S_k$ be the set of points where at least $k$ lines
meet.  Then

$$|\frak S_k| \lesssim N^3 k^{-2}.$$
\end{theorem}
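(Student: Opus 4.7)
The plan is to build a polynomial cell decomposition adapted to $\frak S_k$ via the polynomial ham sandwich theorem, then dichotomize according to whether most of $\frak S_k$ lies in the open cells or on the decomposition surface itself. Concretely, apply polynomial partitioning to $\frak S_k$ at degree $D \sim N$: this produces a polynomial $p$ of degree $\le D$ whose zero set $Z = Z(p)$ cuts $\mathbb{R}^3 \setminus Z$ into $O(D^3)$ open cells, each containing at most $|\frak S_k|/D^3$ points of $\frak S_k$. Set $\frak S_k^{\mathrm{in}} := \frak S_k \cap Z$ and $\frak S_k^{\mathrm{out}} := \frak S_k \setminus Z$; one of these has size at least $|\frak S_k|/2$, which splits the argument into a cellular case and an algebraic case.

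In the cellular case (say $|\frak S_k^{\mathrm{out}}| \ge |\frak S_k|/2$), for each open cell $c$ let $l_c$ be the number of lines of $\frak L$ that meet $c$ and $s_c := |\frak S_k \cap c|$. Every point of $\frak S_k \cap c$ accounts for $\binom{k}{2}$ pairs of distinct lines meeting in $c$, so $s_c \lesssim l_c^2/k^2$; separately, the partition gives $s_c \le |\frak S_k|/D^3$. A line not contained in $Z$ meets $Z$ in at most $D$ points by Bezout and hence enters at most $D+1$ cells, so $\sum_c l_c \lesssim N^2 D$. Split the cells into fat ($l_c \ge C N^2/D^2$) and thin. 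The pairs bound controls thin contributions by roughly $(N^2/D^2)(N^2 D)/k^2 = N^4/(D k^2)$, while the partition bound applied to the at most $D^3/C$ fat cells absorbs their contribution into $|\frak S_k|/C$ for $C$ large. Optimizing $D \sim N$ then yields $|\frak S_k| \lesssim N^3/k^2$.

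In the algebraic case, the target is to show that most lines of $\frak L$ incident to $\frak S_k^{\mathrm{in}}$ lie in $Z$, and then apply the polynomial method of \cite{GK} and \cite{EKS} to the surface $Z$. A line not in $Z$ meets $Z$ in at most $D \sim N$ points, contributing at most $N^3$ total incidences to $\frak S_k^{\mathrm{in}}$; once $|\frak S_k|$ exceeds the target by a large factor, this forces the bulk of the $\ge k|\frak S_k|/2$ incidences at $\frak S_k^{\mathrm{in}}$ to come from lines actually lying in $Z$. At a point of $Z$ where three or more such lines meet, the point must be either a critical point of $p$ (where $\nabla p$ vanishes) or a flat point of $Z$ (where the second fundamental form of $Z$ vanishes); both loci are cut out by auxiliary polynomials of degree $O(D)$ and so are amenable to Bezout-type bounds. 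Planar components of $Z$ are treated separately via the hypothesis that no plane contains more than $N$ lines of $\frak L$, and since $k \ge 3$ reguli do not intrude (a regulus carries only doubly incident points). The main obstacle, as I see it, is entirely in this algebraic case: tracking the incidences cleanly from ``many lines in $Z$'' through the degeneracies that arise when $Z$ is reducible, and combining these with the flat/critical point count, is the technically delicate step, and I would invoke rather than rederive the algebraic-geometry machinery developed in \cite{GK}.
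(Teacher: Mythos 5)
Your broad outline — polynomial partitioning via the ham sandwich theorem, a dichotomy between the cellular case and the ``all points on the partitioning surface'' case, then critical/flat point analysis on the surface — is exactly the strategy the paper uses in Section 4 (Theorem \ref{incidence} with $L=N^2$, $B=N$). But the choice of partitioning degree $D\sim N$ is wrong, and it breaks the algebraic case.

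The paper takes $d\sim \theta L^2 S^{-1}k^{-3}$, where $S=|\frak S_k|$ (Lemma \ref{pointsinsurface}). Near the target value $S\sim N^3 k^{-2}$ with $L=N^2$, that is $d\sim N/k$, much smaller than your $D\sim N$. The reason this matters is the step that forces lines into $Z$: in the algebraic case you must conclude that many lines of $\frak L$ lie inside $Z$, and for that you need the degree to be strictly less than the typical number of $\frak S_k$-points on a line, namely $\sim Sk/L$. With $S\sim N^3 k^{-2}$ one has $Sk/L\sim N/k$, so only a degree $\lesssim N/k$ lets you run the argument ``a line meeting $>d$ points of $Z$ lies in $Z$.'' With $D\sim N$, a typical line has only $\sim N/k$ of its points on $Z$, far fewer than $D$, so you cannot force lines into $Z$; correspondingly, your incidence count ``$k|\frak S_k|/2 \gg N^2\cdot D$'' only kicks in when $|\frak S_k|\gg N^3/k$, which is a factor of $k$ above the target and leaves the whole range $N^3 k^{-2}\lesssim |\frak S_k|\lesssim N^3 k^{-1}$ uncovered. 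This is not a technical bookkeeping issue; with $D\sim N$ the algebraic case of your dichotomy has no content.

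There is a genuine trade-off hiding here that is worth seeing. Your crude per-cell bound $s_c\lesssim l_c^2 k^{-2}$ (counting pairs of lines) does make the cellular case close with $D\sim N$. But the paper's smaller degree $d\sim L^2 S^{-1}k^{-3}$ makes the pairs bound useless in the cellular case; the paper instead applies Szemer\'edi--Trotter ($s_c\lesssim l_c^2 k^{-3}+l_c k^{-1}$) inside each cell, and the extra factor of $k$ from the $k^{-3}$ is precisely what compensates for the smaller degree and still yields $S\lesssim L^{3/2}k^{-2}$. So the ``right'' degree is determined by the algebraic side, and one pays for it by needing the full Szemer\'edi--Trotter bound rather than the pairs bound on the cellular side. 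Finally, the paper also passes through two routine uniformization reductions (Propositions \ref{uniform} and \ref{incidence-}: restrict to points with between $k$ and $2k$ incidences, and arrange that a constant fraction of lines carry about the average number of marked points) before the partitioning argument; you will need something like this to make ``average line meets $\sim Sk/L$ points'' actually usable, rather than just a heuristic.
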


Elekes and Sharir essentially conjectured these two theorems (Conjecture 1 in \cite{ES}).  (The difference is that they used
different coordinates, so their conjectures are about helices.)
In the case $k=3$, Theorem \ref{theorem2} was proven in \cite{EKS}.

Combining these theorems with the coordinates $\rho$ and Proposition \ref{genericity}, we get bounds for $|G_k'(P)|$.  Theorem 
\ref{theorem1} shows that $|G_2'(P)| \lesssim N^3$.  Theorem \ref{theorem2} shows that $|G_k'(P)| \lesssim N^3 k^{-2}$ for $3 \le k \le N$.

We now prove similar bounds for $|G_k^{trans}(P)|$.  These bounds are completely elementary

\begin{lemma} \label{transbound} Let $P$ be any set of $N$ points in ${\bf R}^2$.  The number of quadruples in $E^{-1}(G^{trans})$
is $\le N^3$.  Moreover, $|G^{trans}_k(P)| \lesssim N^3 k^{-2}$ for all $2 \le k \le N$.
\end{lemma}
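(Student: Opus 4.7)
The plan is to exploit the rigidity of translations: once a translation sends one point to another, it is completely determined. Concretely, a quadruple $(p_1,p_2,p_3,p_4) \in E^{-1}(G^{trans})$ iff the unique $g \in G$ with $g(p_1)=p_3$, $g(p_2)=p_4$ is a translation, and this happens iff $p_3-p_1 = p_4-p_2$. Given any three of the four points, the fourth is then uniquely determined, so the number of such quadruples is at most $N^3$. That disposes of the first assertion.

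For the second assertion, I would set $r(v) := |P \cap (P-v)|$, i.e.\ the number of $p \in P$ with $p+v \in P$. If $g$ is the translation by $v$, then $|P \cap gP| = r(v)$, so $G^{trans}_k(P)$ is precisely the set of translation vectors $v$ with $r(v) \ge k$. The key estimate is a second-moment bound: the sum $\sum_v r(v)^2$ counts the quadruples $(a_1,b_1,a_2,b_2) \in P^4$ with $b_1-a_1 = b_2-a_2$, and by the same ``three points determine the fourth'' observation as above, this is bounded by $N^3$.

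Combining these two facts via the trivial Chebyshev-type inequality
\[ k^2 \cdot |\{ v : r(v) \ge k\}| \le \sum_v r(v)^2 \le N^3 \]
gives $|G^{trans}_k(P)| \le N^3 k^{-2}$, as desired. Since every step reduces either to counting configurations with one free parameter removed or to a one-line Markov inequality, there is no real obstacle here; the translation case is genuinely much easier than the case of honest rotations, and that is precisely why it can be handled elementarily outside the main incidence-geometric machinery of Theorems \ref{theorem1} and \ref{theorem2}.
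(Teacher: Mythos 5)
Your proof is correct and follows essentially the same route as the paper. The first part is identical. For the second part, the paper routes through Lemma \ref{inverseimage} to write $|E^{-1}(G^{trans})| = \sum_k 2\binom{k}{2}|G^{trans}_{=k}(P)|$ and then peels off the tail at level $k$, whereas you work directly with the second moment $\sum_v r(v)^2$ and a Chebyshev inequality; but since $\sum_v r(v)^2$ is exactly $|E^{-1}(G^{trans})|$ plus the $N^2$ degenerate quadruples with $p_1 = p_2$, and both bounds rest on the same ``three points determine the fourth'' count, this is a repackaging rather than a genuinely different argument. Both are fine.
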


\begin{proof} Suppose that $(p_1, p_2, p_3, p_4)$ is a distance quadruple in $E^{-1}(G^{trans})$.  By definition, there is a
{\it translation} $g$ so that $g(p_1) = p_3$ and $g(p_2) = p_4$.  Therefore, $p_3 - p_1 = p_4 - p_2$.  This equation allows us
to determine $p_4$ from $p_1, p_2, p_3$.  Hence there are $\le N^3$ quadruples in $E^{-1}(G^{trans})$.

By Proposition \ref{inverseimage} we see that

$$| E^{-1}(G^{trans}) | = \sum_{k=2}^N 2 {k \choose 2} |G^{trans}_{=k}(P)|. $$

Noting that $|G_k^{trans}(P)| = \sum_{l \ge k} |G_{=l}^{trans}(P)|$, we see that

$$ N^3 \ge | E^{-1}(G^{trans}) | \ge 2 {k \choose 2} |G_k^{trans}(P)| . $$

This inequality shows that $|G_k^{trans}(P)| \lesssim N^3 k^{-2}$ for all $2 \le k \le N$. \end{proof}

This substantially ends Section 2 of the paper.  To conclude, we give a summary and make some comments.

The new ingredients in this paper are Theorems \ref{theorem1} and \ref{theorem2}, which we prove in Sections 
3 and 4.  These theorems allow us to bound the partial symmetries of $P$ in $G'$: they imply that $|G'_k(P)| \lesssim
N^3 k^{-2}$ for all $2 \le k \le N$.  An elementary argument in Lemma \ref{transbound} shows the same estimates
for $|G^{trans}_k(P)|$.  Combining these, we see that $|G_k(P)| \lesssim N^3 k^{-2}$ for $2 \le k \le N$, proving
Proposition \ref{partsymmbound}.  Now the number of quadruples in $Q(P)$ is expressed in terms of $|G_k(P)|$
in equation \ref{quadformula}.  Plugging in our bound for $|G_k(P)|$, we get that $|Q(P)| \lesssim N^3 \log N$,
proving Proposition \ref{quadbound}.  Finally, the number of distinct distances is related to $|Q(P)|$ by Lemma \ref{CS}.  
Plugging in our bound for $|Q(P)|$, we see that $|d(P)| \gtrsim N (\log N)^{-1}$, proving our main theorem.

The group $G$ acts as a bridge connecting the original problem on distinct distances to the incidence
geometry of lines in ${\bf R^3}$.  The distance set $d(P)$ is related to the set of quadruples $Q(P)$ which is related
to the partial symmetries $G_k(P)$, which correspond to $k$-fold intersections of the lines in $\frak L$.  The group $G$ is a 
natural symmetry group for the problem of distinct distances, but this way of using the symmetry group is new and rather surprising.

Our estimates show that sets with few distinct distances must have many partial symmetries.  For example,
if $G_3(P)$ is empty, then our results show that $|Q(P)| \lesssim N^3$ and $|d(P)| \gtrsim N$.  Also, any set with
$|d(P)| \lesssim N (\log N)^{-1/2}$ must have a partial symmetry with $k \ge \exp (c \log^{1/2} N)$, for a universal constant
$c > 0$.  Any set with $|d(P)| \lesssim N (\log N)^{-1}$ must have a partial symmetry with $k \ge N^c$ for a universal $c > 0$.

\section{Flecnodes}

Our goal in this section is to prove Theorem \ref{theorem1}. We will do this by purely algebraic methods
following essentially the proof strategy of \cite{GK}. That is, we will show that an important subset of our
lines lies in the zero set of a fairly low degree polynomial $p$. What requires a new idea is the next step. We
need a polynomial $q$ derived from $p$ with similar degree on which the lines also vanish. With that information
we will apply a variant of Bezout's lemma.

\begin{lemma} \label{Bezoutlines}  Let $p(x,y,z)$ and $q(x,y,z)$ be polynomials on ${\bf R}^3$ of degrees
$m$ and $n$ respectively. If there is a set of $mn+1$ distinct lines simultaneously contained in the
zero set of $p$ and the zero set of $q$ then $p$ and $q$ have a common factor. \end{lemma}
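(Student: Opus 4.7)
The plan is to establish the contrapositive: assuming $p$ and $q$ share no common factor, I will show that $Z(p) \cap Z(q)$ contains at most $mn$ lines, contradicting the existence of $mn+1$ distinct common lines. The strategy is to eliminate the variable $z$ via a resultant, converting the three-dimensional problem into a degree bound on a polynomial in two variables where each line in the intersection is forced to contribute its own linear factor.

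First I would apply a generic linear change of coordinates so that: (i) the leading coefficients of $p$ and $q$ in $z$ are nonzero constants (so $\deg_z p = m$ and $\deg_z q = n$); (ii) none of the $mn+1$ lines is parallel to the $z$-axis; and (iii) the projections $\pi(L) \subset \mathbb{R}^2$ of the $mn+1$ lines $L$ under $\pi(x,y,z) := (x,y)$ are pairwise distinct. All three conditions hold on a Zariski-open set of coordinate choices. Then I would form the Sylvester resultant
$$R(x,y) := \operatorname{Res}_z\bigl(p(x,y,z),\, q(x,y,z)\bigr) \in \mathbb{R}[x,y].$$
Standard properties of the resultant over the UFD $\mathbb{R}[x,y,z]$ give that $R$ has total degree at most $mn$, and that $R \not\equiv 0$: the latter uses that $p$ and $q$ share no common factor of positive degree in $z$, which by (i) is equivalent to sharing no common factor at all.

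The key geometric step is that for any line $L \subseteq Z(p) \cap Z(q)$, the resultant $R$ vanishes identically on $\ell := \pi(L)$. Indeed, for $(x_0, y_0) \in \ell$ choose $z_0$ with $(x_0, y_0, z_0) \in L$; then $p(x_0, y_0, \cdot)$ and $q(x_0, y_0, \cdot)$ share the root $z_0$ as polynomials in $z$, forcing $R(x_0, y_0) = 0$. Since $R$ vanishes along the entire line $\ell$, the linear form defining $\ell$ must divide $R$. By (iii), distinct lines $L$ give pairwise distinct linear factors of $R$, so the number of lines is at most $\deg R \le mn$.

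The main obstacle is really bookkeeping: verifying the classical total-degree bound $\deg R \le mn$ in the form used here, and confirming that a single linear change of coordinates can simultaneously achieve all of (i)--(iii). Both are standard but require some care. A cleaner algebraic alternative would be to factor $p = \prod p_i^{a_i}$ and $q = \prod q_j^{b_j}$ into irreducibles, bound the number of lines in each $Z(p_i) \cap Z(q_j)$ by $\deg(p_i)\deg(q_j)$ using the fact that a curve of degree $d$ in $\mathbb{R}^3$ contains at most $d$ lines (a generic plane meets such a curve in at most $d$ points), and sum to obtain the total bound $mn$.
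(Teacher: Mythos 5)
The paper states Lemma~\ref{Bezoutlines} without proof, treating it as a standard variant of B\'ezout's theorem, so there is no in-paper argument to compare against; but your proof is correct and self-contained. The resultant route is sound: after a generic real linear change of coordinates enforcing (i)--(iii), the Sylvester resultant $R=\operatorname{Res}_z(p,q)$ has total degree at most $mn$ (the $(r,k)$ entry of the Sylvester matrix has degree at most $k-i(r)$, and summing over any permutation gives exactly $mn$), it is not identically zero because (i) forces any common factor of $p$ and $q$ to have positive $z$-degree, and---again because of (i)---the leading coefficients never drop, so $R$ specializes correctly and vanishes along the projection of each common line. Then $mn+1$ distinct real linear forms would all divide $R$, a contradiction. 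The one point worth spelling out is the specialization step (that $R(x_0,y_0)$ really equals $\operatorname{Res}(p(x_0,y_0,\cdot),q(x_0,y_0,\cdot))$), which is exactly why you need the $z$-leading coefficients to be nonzero constants; you noted (i), so this is fine. Your ``cleaner alternative''---that $Z(p)\cap Z(q)$ is a curve of degree at most $mn$ when $\gcd(p,q)=1$, and a degree-$d$ space curve contains at most $d$ lines---is essentially the textbook B\'ezout argument and is likely what the authors had in mind; the two approaches are interchangeable here, with the resultant version being more elementary but requiring the coordinate-genericity bookkeeping, and the curve-degree version requiring slightly more algebraic-geometry input (irreducible decomposition and generic plane sections).
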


Thus we will conclude that $p$ and the derived polynomial $q$ must have a common factor and we will arrive
at some geometrical conclusion from this based on the way that $q$ was derived. In the paper \cite{GK}, the derived
polynomials that we used were the gradient of $p$ and the algebraic version of the second fundamental form
of the surface given by $p=0$. These were good choices because when three or more lines were incident at each
point, we knew on geometric grounds that one or the other would vanish at each point, because the point would
be either critical or flat. However, here we are faced with points at which only two lines intersect, and so
we must make a more clever choice of the derived polynomial.

We begin with the definition of a flecnode. Given an algebraic surface in ${\bf R}^3$ given by the equation
$p(x,y,z)=0$ where p is a polynomial of degree $d$ at least 3, a flecnode is a point $(x,y,z)$ where a line agrees
with the surface to order three.
To find all such points, we might solve the system of equations:

$$p(x,y,z)=0; \quad  \nabla_v p(x,y,z)=0; \quad \nabla_v^2 p(x,y,z)=0; \quad \nabla_v^3 p(x,y,z)=0.$$

These are four equations for six unknowns,  $(x,y,z)$ and the components for the direction $v$. However the last three equations
are homogeneous in $v$ and may be viewed as three equations in five unknowns (and the whole system as 4 equations in 5 unknowns.)
We may reduce  the last three equations
to a single equation in three unknowns $(x,y,z)$. We write the reduced equation as
$$\operatorname{Fl}(p) (x,y,z) = 0.$$
The polynomial $\operatorname{Fl}(p)$ is of degree $11d-24$. It is called the flecnode polynomial of $p$ and vanishes at any
flecnode of any level set of $p$. (See \cite{Salm}  Art. 588 pages 277-78.)

The term flecnode was apparently first coined by Cayley. The polynomial 
$\operatorname{Fl}(p)$ was discovered by the
Rev. George Salmon, but its most important property to us was communicated to him by Cayley.

\begin{proposition} \label{flecnode} The surface $p=0$ is ruled if and only if 
$\operatorname{Fl}(p)$ is everywhere vanishing
on it. \end{proposition}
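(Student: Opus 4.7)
The two implications are handled separately.

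The direction $(\Rightarrow)$ is immediate. If $S := \{p=0\}$ is ruled, then through every smooth point $x_0 \in S$ there passes a line $\ell \subset S$; picking a direction vector $v$ along $\ell$, the restriction $\phi(t) := p(x_0 + tv)$ vanishes identically in $t$, so $\nabla_v^j p(x_0) = \phi^{(j)}(0) = 0$ for every $j \geq 0$, in particular for $j = 1, 2, 3$. Hence $(x_0,v)$ satisfies the flecnodal system and $\operatorname{Fl}(p)(x_0) = 0$. Since the smooth locus is Zariski dense in $S$, $\operatorname{Fl}(p)$ vanishes on all of $S$.

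For the harder direction $(\Leftarrow)$, assume $\operatorname{Fl}(p)$ vanishes on $S$. I form the incidence variety
\[
\Phi := \{(x,[v]) \in S \times {\bf P}^2 : \nabla_v p(x) = \nabla_v^2 p(x) = \nabla_v^3 p(x) = 0\},
\]
so that the hypothesis says the first projection $\Phi \to S$ is surjective onto the smooth locus of $S$. I pick an irreducible component $\Phi_0 \subseteq \Phi$ dominating an irreducible component $S_0$ of $S$; over a dense open $U \subseteq S_0$ this produces, after choosing a branch, an algebraic field of flecnodal directions $x \mapsto [v(x)]$. Write $\ell(x)$ for the line through $x$ in direction $v(x)$, and form the constructible set
\[
R := \bigcup_{x \in U} \ell(x) \subseteq {\bf R}^3.
\]
Each $\ell(x)$ contains $x$, so $S_0 \subseteq \overline{R}$. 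The plan is to show $\overline{R} = S_0$: this exhibits $S_0$ as swept out by a one-parameter family of lines, hence as ruled. Running the argument over all components of $S$ then gives that $S$ itself is ruled.

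The heart of the matter is to prove $\ell(x) \subset S_0$ for generic $x \in U$ -- that is, to bootstrap pointwise fourth-order contact into full containment. The plan is to differentiate the identities $\nabla_{v(x)}^j p(x) = 0$ ($j=1,2,3$) in tangent directions to $S_0$, producing additional polynomial relations among the coefficients of the univariate polynomial $\phi_x(t) := p(x + tv(x))$. These relations, combined with Lemma \ref{Bezoutlines} applied to the two-parameter family of flecnodal lines (each of which, if not contained in $S_0$, meets $S_0$ with multiplicity at least $4$ at its base point), should force the remaining coefficients of $\phi_x(t)$ to vanish, yielding $\phi_x \equiv 0$ and hence $\ell(x) \subset S_0$. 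This is the classical Cayley-Salmon argument referenced in Salmon's treatise.

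The main obstacle is precisely this bootstrap: turning pointwise order-$4$ contact at every point of $S$ into containment of the entire flecnodal line in $S$. All the other ingredients -- the easy direction, the construction of $\Phi$, and the reduction to $\overline{R} = S_0$ -- are formal.
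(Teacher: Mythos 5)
Your forward direction is correct and matches the paper's one-line observation exactly: a line contained in $S$ through $x_0$ makes $t\mapsto p(x_0+tv)$ vanish identically, hence all the flecnodal derivatives vanish, and density of smooth points does the rest.

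For the reverse direction there is a genuine gap, which to your credit you name explicitly: the bootstrap from pointwise order-$4$ contact at every point of $S$ to containment of the flecnodal line $\ell(x)$ in $S$ is the entire content of the Cayley--Salmon theorem, and the plan you sketch (``differentiate the flecnodal identities in tangent directions and apply Lemma~\ref{Bezoutlines} to the two-parameter family of flecnodal lines'') does not carry it out and, as stated, would not carry it out. Bezout's lemma controls \emph{common zero sets} of two polynomials; it does not by itself promote fourth-order tangency of a moving line to containment. The actual argument (Salmon, Art.~588, and Segre's paper cited in the text) works by showing that $\operatorname{Fl}(p)\equiv 0$ on $S$ encodes a differential equation on $p$ whose integral is the condition that $S$ is ruled, and the proof proceeds by differentiating the flecnodal conditions \emph{along the flecnodal direction field itself} and propagating vanishing of successive coefficients of $\phi_x(t)$ — not by a Bezout-style intersection count. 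There is also a secondary technical point you should be aware of: vanishing of the resultant $\operatorname{Fl}(p)(x)$ guarantees a \emph{complex} flecnodal direction, so surjectivity of $\Phi\to S$ over the smooth real locus with a real branch $[v(x)]$ requires an argument.

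You should know, however, that the paper itself does not prove this direction: it states that the reverse implication ``is more computational,'' refers to the footnote in Salmon Art.~588 p.~278, and characterizes $\operatorname{Fl}(p)=0$ as a differential equation implying ruledness, also citing Segre and Landsberg. So your proposal is at the same level of completeness as the paper for the hard direction, and your incidence-variety scaffolding is a reasonable modern reformulation of the setup — but the proof you outline would need to be replaced by the classical differential-equation argument (or a substitute) to be a proof at all.
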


An algebraic surface (in ${\bf R}^3$) is ruled if it contains a line passing through every point. The set of all
lines contained in an algebraic surface (of some degree N) is an algebraic set of lines. (This
is because a line is contained in the surface if and only if it is contained to
order $N+1$ at one of its points. So a line is contained in the surface if and only if $N+1$ polynomial equations
in the parameters of the line are satisfied.) The set of lines contained in a surface may have two
dimensional components, one-dimensional components and zero-dimensional components. It is easy to see
that an algebraic surface in ${\bf R}^3$ contains a two dimensional set of lines only if it has a plane as a factor.
(The way to see this is to find a regular point of the surface with an infinite number of lines going through it. Then
the surface must contain the tangent plane to this point.) 
 Thus an algebraic surface which is ruled and plane-free will
contain both a 1-dimensional set of lines (the generators) and possibly a 0-dimensional set
of lines. A detailed classical treatment of ruled surfaces is given in \cite{Salm}  Chapter XIII
Part 3.

One important example of a ruled surface is a regulus.  A regulus is actually doubly-ruled: 
every point in the regulus lies in two lines in the regulus.  A ruled surface is called singly-ruled if
a generic point in the surface lies in only one line in the surface.  (Some points in a singly-ruled
surface may lie in two lines.)  Except for reguli and planes, every irreducible ruled
surface (in ${\bf R}^3$) is singly-ruled.  (See the explanation in Section 1).

One direction of Proposition \ref{flecnode} is obvious. If the surface is ruled, there is a line contained in the surface
at every point. If the line is contained in the surface, it certainly agrees to order 3. The reverse direction is
more computational. It is indicated in a footnote to \cite{Salm}  Art. 588 page 278. One sees that setting $\operatorname{Fl}(p)=0$
is a way of rewriting a differential equation on $p$ which implies ruledness.  Proposition \ref{flecnode} was
used in a famous paper of Segre \cite{Seg}. For a generalization to manifolds in higher dimensions see \cite{Land}.

An immediate corollary of the proposition is

\begin{corollary} \label{flecbound} Let $p=0$ be a degree $d$ hypersurface in ${\bf R}^d$. Suppose that
the surface contains more than $11d^2 -24d$ lines. Then $p$ has a ruled factor. \end{corollary}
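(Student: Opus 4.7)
The plan is to apply Lemma \ref{Bezoutlines} to $p$ together with its flecnode polynomial $\operatorname{Fl}(p)$, and then invoke Proposition \ref{flecnode} on the resulting common factor.

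First I would observe that every line $\ell$ lying in the surface $\{p=0\}$ consists entirely of flecnodes. Indeed, if $v$ is a direction vector for $\ell$, then $p$ vanishes identically on $\ell$, so all directional derivatives $\nabla_v^k p$ vanish along $\ell$; in particular the three conditions $\nabla_v p = \nabla_v^2 p = \nabla_v^3 p = 0$ hold at every point of $\ell$ with witness direction $v$. Hence $\operatorname{Fl}(p)$ vanishes identically on $\ell$. Now $\deg p = d$ and $\deg \operatorname{Fl}(p) = 11d - 24$, so the hypothesis of more than $11d^2 - 24d = d\cdot(11d-24)$ lines in $\{p=0\}$ produces more than $\deg(p)\deg(\operatorname{Fl}(p))$ lines simultaneously contained in the zero sets of $p$ and $\operatorname{Fl}(p)$. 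Lemma \ref{Bezoutlines} then furnishes a nontrivial common factor, from which I extract an irreducible common factor $q$ of $p$ and $\operatorname{Fl}(p)$.

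Finally I would conclude that $\{q=0\}$ is ruled, which is what the statement demands. Since $q\mid p$ and $q\mid \operatorname{Fl}(p)$, the flecnode polynomial $\operatorname{Fl}(p)$ vanishes identically on the irreducible surface $\{q=0\}$. To translate this into vanishing of $\operatorname{Fl}(q)$ on $\{q=0\}$ — which is what Proposition \ref{flecnode} actually requires — I would factor $p = qr$ and work at a generic point of $\{q=0\}$ where $r$ does not vanish. At such a point, Leibniz's rule shows that each derivative $\nabla_v^k p$ reduces to $r\,\nabla_v^k q$ (plus terms that vanish because of lower-order vanishing of $q$), so a witness direction $v$ realizing the flecnode condition for $p$ at the point also realizes it for $q$. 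Therefore $\operatorname{Fl}(q)$ vanishes on a Zariski-dense open subset of $\{q=0\}$, and hence on all of $\{q=0\}$. Proposition \ref{flecnode} then implies that $\{q=0\}$ is ruled, so $q$ is the desired ruled factor of $p$.

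The step I expect to be the main obstacle is precisely this last one: transferring the vanishing of $\operatorname{Fl}(p)$ on an irreducible component of $\{p=0\}$ into the vanishing of $\operatorname{Fl}(q)$ on that component. Salmon's theorem is attached to the defining polynomial, and the flecnode polynomial is not well behaved under factorization, so some care is needed in the derivative computation at points where the complementary factor $r$ is nonzero. The rest of the argument is a clean application of Bezout's lemma with the flecnode degree bound $11d-24$.
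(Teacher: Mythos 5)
Your proof follows the paper's approach exactly: apply Lemma \ref{Bezoutlines} to $p$ and $\operatorname{Fl}(p)$ (both of which vanish on the more than $d(11d-24)$ lines in the surface), extract a common factor $q$, and argue that $\operatorname{Fl}(q)$ vanishes on $\{q=0\}$ so that Proposition \ref{flecnode} applies. Your Leibniz computation at a generic point of $\{q=0\}$ where the complementary factor $r$ is nonvanishing is precisely the content of the paper's terse assertion that ``at every regular point of the surface $q=0$, there is a line which meets the surface to order 3''; you have correctly flagged and filled in the one delicate step that the paper leaves implicit.
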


\begin{proof} By lemma \ref{Bezoutlines}, since both $p$ and $\operatorname{Fl}(p)$ vanish on the same set of more than
$11d^2-24d$ lines, they must have a common factor $q$. Since $q$ is a factor of $p$
and $\operatorname{Fl}(p)$ vanishes on the surface $q=0$, it
must be that at every regular point of the surface $q=0$, there is a line which meets the surface
to order 3. Thus $\operatorname{Fl}(q)=0$ which implies by Proposition \ref{flecnode}
that $q$ is ruled.
 \end{proof}

Now we would like to consider ruled surfaces of degree less than $N$. Thus our surfaces are the sets
$$p(x,y,z)=0$$
for a polynomial $p$ (which we may choose square free) of degree less than $N$. We may uniquely factorize the
polynomial into irreducibles:
$$p=p_1 p_2 \dots p_m.$$
We say that $p$ is plane-free and regulus-free if none of the zero sets of the factors is a plane or a regulus.
Thus if $p$ is plane-free and regulus-free, the zero-set of each of the factors is an irreducible algebraic
singly-ruled surface. We now state the main geometrical lemma for proving Theorem \ref{theorem1}.

\begin{lemma} \label{ruled}  Let $p$ be a polynomial of degree less than $N$ so that $p=0$ is ruled
and so that $p$ is plane-free and regulus-free. Let $\frak L_1$ be a set of lines contained in the
surface $p=0$ with $| \frak L_1| \lesssim N^2$.  Let $Q_1$ be the set of points of intersection
of lines in $\frak L_1$.  Then
$$|Q_1| \lesssim N^3.$$
\end{lemma}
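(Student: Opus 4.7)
The plan is to factor $p$ into irreducibles and treat each (singly-ruled) component separately, handling cross-component intersections by a direct B\'ezout count and intra-component intersections by exploiting the singly-ruled structure.

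Write $p=p_1\cdots p_m$ with $p_i$ irreducible of degree $d_i$, so $\sum_i d_i<N$, and set $S_i=\{p_i=0\}$. Since $p$ is plane-free and regulus-free, each $S_i$ is an irreducible singly-ruled surface; through a generic point of $S_i$ passes exactly one line in $S_i$ (its generator), and $S_i$ contains only finitely many ``non-generator'' lines. Split $Q_1$ into intra-component points (both intersecting lines live on the same $S_i$) and inter-component points (the two lines live on distinct components).

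For inter-component points, if $\ell\subset S_i$ meets some $\ell'\subset S_j$ with $j\neq i$, then the intersection lies on $S_i\cap S_j$. Crudely, $\ell\cap(\bigcup_{j\neq i}S_j)$ consists of at most $\sum_{j\neq i}d_j\leq N$ points (provided $\ell$ is not already contained in some other component, which is a controlled degenerate case). Summing over the $\lesssim N^2$ lines of $\frak L_1$ gives $\lesssim N^3$ inter-component intersections, well within the desired budget.

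For intra-component points the key geometric input, sketched in the introduction, is that two lines of $S_i$ can cross only at a singular point of $p_i$, except for crossings lying in the union of a controlled (finite) list of exceptional lines on $S_i$---the non-generators together with a bounded number of distinguished generators. The singular locus $S_i\cap\{\nabla p_i=0\}$ is a curve on $S_i$ whose degree, for a singly-ruled surface, is $O(d_i)$. Hence a line in $\frak L_1\cap S_i$ not contained in that locus meets it in $O(d_i)$ points, so the number of intra-component intersections on $S_i$ is $\lesssim L_i d_i$ where $L_i=|\frak L_1\cap S_i|$. Summing, $\sum_i L_i d_i\leq(\max_i d_i)\sum_i L_i\leq N\cdot|\frak L_1|\lesssim N^3$.

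The main obstacle is the intra-component step: showing that interior crossings are confined (up to the exceptional lines) to $\{\nabla p_i=0\}$, and establishing a linear-in-$d_i$ bound on the degree of that curve on a singly-ruled surface. The first part uses uniqueness of the generator at a generic point --- two distinct lines through a common non-exceptional point would yield two distinct generators there, forcing it onto the curve where generator uniqueness fails, which in turn sits in the singular locus. The second part is the classical feature distinguishing singly-ruled surfaces from generic surfaces of the same degree (B\'ezout applied naively only gives $O(d_i^2)$, which is too weak), and is where the structure of ruled surfaces genuinely enters the proof.
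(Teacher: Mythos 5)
Your overall skeleton matches the paper's: factor $p$, identify a controlled collection of exceptional objects, handle them separately, and then bound intersections among the ``generic'' generators. The inter-component step (B\'ezout counting: each line meets the other irreducible components in $\lesssim N$ points, giving $\lesssim N^3$ total) is fine and is implicitly present in the paper's argument. The bookkeeping of exceptional points and exceptional lines (at most one exceptional point and two exceptional lines per irreducible factor, hence $\lesssim N$ of each, hence $\lesssim N^3$ intersections involving them) also parallels what the paper does via Corollary \ref{takingexception}.

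The gap is exactly where you flag the ``main obstacle,'' and it is not a small one. Your intra-component step rests on two claims that you state but do not establish: first, that a crossing of two non-exceptional generators at a non-exceptional point is forced to be a \emph{singular} point of $S_i$; and second, that the singular locus of an irreducible singly-ruled surface of degree $d_i$ is a curve of degree $O(d_i)$ rather than the $O(d_i^2)$ that B\'ezout gives. The first claim is in fact true, but the quick argument you give (``two generators through a point forces it onto the locus where generator uniqueness fails, which sits in the singular locus'') does not actually prove it; showing that the generator-uniqueness locus is contained in the singular locus itself requires an argument. The second claim is the real obstruction: you correctly identify that naive B\'ezout is off by a factor of $d_i$, and you are essentially assuming the conclusion you want (that the ``double curve'' of a ruled surface is linear in the degree) as an input. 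As stated, your proof would need a separate lemma about double curves of ruled surfaces that you do not supply.

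The paper sidesteps both issues with an argument that makes no reference to the singular locus or its degree. For a non-exceptional generator $l$ (lying in the zero set of the full polynomial $p$ of degree $<N$), choose any plane $\pi\supset l$. Then $\pi\cap\{p=0\}$ is a plane curve of degree $<N$ with $l$ as one component; let $c$ denote the residual curve, of degree $\leq N-1$. One then shows directly --- by pushing the $1$-parameter family of generators near any crossing generator $l'$ into $\pi$ and using that $q$, $l$, and the points near $q$ are all non-exceptional --- that every non-exceptional crossing point $q\in l$ lies on $c$, and hence on $l\cap c$, which has at most $N-1$ points. This gives the per-line bound of $O(N)$ directly, summing to $\lesssim N^3$ without ever computing or bounding the degree of the singular curve. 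In particular, the paper gets away with a local (per-generator) estimate rather than a global degree bound on a subvariety, which is why it needs less machinery than your proposal. If you want to pursue your route, you would need to prove the linear degree bound for the double curve of a singly-ruled surface; otherwise the argument as written does not close.
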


Before we begin in earnest the proof of Lemma \ref{ruled}, we will nail down a few delicate points of the geometry
of irreducible singly-ruled surfaces.

We let $p(x,y,z)$ be an irreducible polynomial so that $p(x,y,z)=0$ is a ruled surface which is not a plane or a regulus.
In other words, the surface $S=\{(x,y,z): p(x,y,z)=0\}$ is irreducible and singly-ruled. We say that a point $(x_0,y_0,z_0) \in S$ 
 is an {\it exceptional point} of the surface, if it lies on infinitely many lines contained in the surface. We say that a line
$l$ contained in $S$ is an {\it exceptional line} of the surface if there are infinitely many lines in $S$ which intersect $l$
at non-exceptional points. We prove a structural lemma about exceptional points and exceptional lines of irreducible singly-ruled surfaces.

\begin{lemma} \label{exceptional} Let $p(x,y,z)$ be an irreducible polynomial. Let $S=\{(x,y,z): p(x,y,z)=0 \}$ be an irreducible
surface which is neither a plane nor a regulus.
\begin{enumerate}
\item  Let $(x_0,y_0,z_0)$ be an exceptional point of $S$. Then every other point $(x,y,z)$ of $S$ is on a line $l$ which is
contained in $S$ and which contains the point $(x_0,y_0,z_0)$.
\item  Let $l$ be an exceptional line of $S$. Then there is an algebraic curve $C$ so that every point of $S$ not lying on $C$ is contained
in a line contained in $S$ and intersecting $l$.
\end{enumerate}
\end{lemma}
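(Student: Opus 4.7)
For Part 1, the plan is to form the cone of lines in $S$ through $(x_0, y_0, z_0)$ and show that this cone equals $S$. Concretely, consider the set of directions $v \in \mathbb{P}^2$ for which the line $t \mapsto (x_0, y_0, z_0) + t v$ lies in $S$. Expanding $p((x_0, y_0, z_0) + t v)$ in powers of $t$ and demanding each coefficient vanish gives a finite system of polynomial conditions on $v$, so the admissible directions form an algebraic subset $B \subset \mathbb{P}^2$. By hypothesis $B$ is infinite, so it contains a $1$-dimensional component. The cone $K$ over $B$ with vertex $(x_0, y_0, z_0)$ is then a $2$-dimensional algebraic surface with $K \subseteq S$. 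Since $S$ is irreducible of dimension $2$, this forces $K = S$, so every point of $S$ lies on a line of $K$, which by construction passes through $(x_0, y_0, z_0)$.

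For Part 2, I would exploit the structure of lines on an irreducible singly-ruled surface that is neither a plane nor a regulus: the set $\mathcal{F}$ of lines contained in $S$ decomposes as a $1$-dimensional irreducible ruling component $\mathcal{F}_0$ together with at most finitely many extra lines. Let $\mathcal{F}_l \subseteq \mathcal{F}$ be the closed algebraic subvariety of lines meeting $l$. Since $l$ is exceptional, there are infinitely many lines in $S$ meeting $l$ at non-exceptional points of $S$, so $\mathcal{F}_l$ is infinite; because only finitely many extra lines exist, $\mathcal{F}_l \cap \mathcal{F}_0$ is infinite, and the irreducibility of $\mathcal{F}_0$ then forces $\mathcal{F}_l \cap \mathcal{F}_0 = \mathcal{F}_0$. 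In other words, every ruling line of $S$ meets $l$. The union of all ruling lines is a closed $2$-dimensional algebraic subset of the irreducible surface $S$ (the image of the proper incidence correspondence $I \subseteq \mathcal{F}_0 \times S$ under the second projection), so it covers all of $S$ except for a proper, hence at most $1$-dimensional, algebraic subset $C$. Every point of $S$ outside $C$ lies on a ruling line, which meets $l$.

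The principal obstacle is the structural input for Part 2: that the $1$-dimensional part $\mathcal{F}_0$ of $\mathcal{F}$ is irreducible. This follows from $S$ being irreducible and singly-ruled, since two distinct irreducible $1$-parameter families of lines in $S$ would each sweep out the whole surface, giving $S$ two rulings and contradicting the singly-ruled hypothesis. A secondary technical point is transferring the complex-algebraic equality ``union of ruling lines $= S$ as varieties'' to the real-geometric statement that the real complement is contained in a real algebraic curve; this reduces to a standard Zariski-dimension count.
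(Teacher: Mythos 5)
Your proposal is correct in outline but takes a genuinely different, more structural route than the paper. For Part~1, your set $B$ of admissible directions is just the projectivization of the paper's set $Q$ (the points $q \neq 0$ for which the line $\overline{0q}$ lies in $S$), so the underlying objects agree; the difference is the mechanism for identifying the cone with $S$. The paper shows that $Q$ is cut out away from the origin by finitely many polynomials, observes that each of these vanishes on the infinitely many lines of $S$ through the origin, and then applies the line-counting B\'ezout lemma (Lemma~\ref{Bezoutlines}) together with irreducibility of $p$ to conclude that $p$ divides each such polynomial, hence $Q \supseteq S \setminus \{0\}$. You instead form the cone over a positive-dimensional component of $B$ and run a dimension-and-irreducibility count. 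For Part~2 the divergence is larger: the paper works directly at a regular point $(x,y,z)$ of $S$ with $\partial p/\partial x \neq 0$, uses the tangent plane to pin down the unique candidate intersection point on $l$ as a rational function with denominator a power of $\partial p/\partial x$, clears denominators to obtain polynomial conditions, and again invokes B\'ezout; the exceptional curve is $C = \{p = \partial p/\partial x = 0\}$. You instead pass to the Fano-type moduli space $\mathcal{F}$ of lines in $S$, argue that its one-dimensional part $\mathcal{F}_0$ is a single irreducible curve, and deduce $\mathcal{F}_l \supseteq \mathcal{F}_0$. Both are sound, but note what the paper's route buys: staying entirely at the level of polynomial divisibility sidesteps the real-versus-complex dimension subtleties you flag at the end (over $\mathbb{R}$ the image of the incidence correspondence need not be a Zariski-closed algebraic subset, so your ``union of ruling lines is a closed algebraic subset of $S$'' step needs care), and it also avoids having to establish the irreducibility of the ruling component $\mathcal{F}_0$ as a separate structural input.
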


We proceed to give an elementary proof of Lemma \ref{exceptional}:

\begin{proof} 
To prove the first part, we observe that by a change of coordinates we can move $(x_0,y_0,z_0)$ to the origin.
We let $Q$ be the set of points $q$ different from the origin so that the line from $q$ to the origin is contained in $S$.
We observe that $Q$ is
the intersection of an algebraic set with the complement of the origin. That is, there is a finite set of polynomials $E$ so that
a point $q$ different from the origin lies in $Q$ if and only if each polynomial in $E$ vanishes at $q$. This is because if $d$ is
the degree of $p$, to test whether $q \in Q$, we need only check that the line containing $q$ and the origin is tangent to $S$
to degree $d+1$ at $q$. Now by assumption, the zero set of each polynomial in $E$ contains the union of infinitely many lines 
contained in $S$. Thus by Lemma \ref{Bezoutlines} and by the irreducibility of $p$, it must be that each polynomial in $E$ has $p$
as a factor.  Therefore $Q$ is all of $S$ except the origin. We have proved the first part.

Now to prove the second part, we observe that by a change of coordinates, we may choose $l$ to be the coordinate line $y=0; z=0$.
We let $Q$ be the set of points $q$ not on $l$ so that there is a line from $q$ to a non-exceptional point of $l$ which is contained in
$S$. We would like to claim that $Q$ is the intersection of an algebraic set with the complement of an algebraic curve. If we
are able to show this, we will prove the second claim in the same way that we proved the first. To do this, 
for points $(x,y,z)$ on $S$ outside of an algebraic curve, we will identify the point at which the line containing $(x,y,z)$ intersects $l$.

Consider a point $(x,y,z)$ on $S$ for which ${\partial p \over \partial x} (x,y,z) \neq 0$. In particular, the point $(x,y,z)$ is
a regular point of $S$. Since ${\partial p \over \partial x} (x,y,z) \neq 0$, there is a unique point $(x^{\prime},0,0)$ of $l$
which lies in the tangent plane to $S$ at the point $(x,y,z)$. In fact, we can solve for $x^{\prime}$ as a rational function of
$(x,y,z)$ with only the polynomial ${\partial p \over \partial x}$ in the denominator. Thus we can find a set $E$ of rational functions
having only powers of ${\partial p \over \partial x}$ in their denominators, so that for any $(x,y,z)$ at which
${\partial p \over \partial x}$ does not vanish, we have that $(x,y,z) \in Q$ if and only if, every function in $E$ vanishes on $(x,y,z)$.

In order for the previous paragraph to be useful to us, we need to know that ${\partial p \over \partial x}$ does not vanish
identically on $S$. Suppose that it did. Since ${\partial p \over \partial x}$ is of lower degree than $p$ and $p$ is irreducible,
it must be that ${\partial p \over \partial x} $ vanishes identically as a polynomial so that $p$ depends only on $y$ and $z$. In this
case, since $S$ contains $l$ and it contains a line $l^{\prime}$ intersecting $l$, it must contain all translates of $l^{\prime}$ in the 
$x$-direction. Thus it contains a plane which is a contradiction.

Thus, we let $C$ be the algebraic curve where both $p$ and ${\partial p \over \partial x}$ vanish. Away from $C$, there is a 
finite set of polynomials $F$ (which we obtain from $E$ by multiplying by a large enough power of ${\partial p \over \partial x} $)
so that a point $(x,y,z)$ of $S$ outside of $C$  is in $Q$  if and only if each polynomial in $F$ vanishes at $(x,y,z)$. Since
we know that $p$ is irreducible and $Q$ contains an infinite number of lines, it must be that each polynomial in $F$ has $p$
as a factor. Thus every point of $S$ which is outside of $C$ lies in $Q$ which was to be shown.

\end{proof}

Now that we have established our structural result, Lemma \ref{exceptional}, we may use it to obtain a corollary
giving quantitative bounds on the number of exceptional points and lines.  

\begin{corollary} \label{takingexception} Let $p(x,y,z)$ be an irreducible polynomial. Let $S=\{(x,y,z): p(x,y,z)=0 \}$ be an irreducible
surface which is neither a plane nor a regulus. Then $S$ has at most one exceptional point and at most two exceptional lines.
\end{corollary}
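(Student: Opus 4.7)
The plan is to play the structural content of Lemma~\ref{exceptional} against the singly-ruled property of $S$ (every generic point of $S$ lies on a unique line of $S$), so as to derive a contradiction if there were two exceptional points or three exceptional lines.

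\emph{At most one exceptional point.} Suppose for contradiction that $P_1\neq P_2$ are both exceptional, and let $L$ be the line through them. Since $S$ is a surface rather than a line, we may pick a generic $q\in S$ lying off $L$ and off the proper subvariety of $S$ consisting of points through which more than one line of $S$ passes (this subvariety is proper precisely because $S$ is singly-ruled). By Lemma~\ref{exceptional}(1) applied to $P_1$ and then to $P_2$, there are lines $m_1,m_2\subset S$ through $q$ containing $P_1$ and $P_2$ respectively. If $m_1=m_2$ then this common line contains $P_1,P_2,q$, forcing it to equal $L$ and hence $q\in L$, contradicting our choice. Thus $m_1\neq m_2$, and $q$ lies on two lines of $S$, which again contradicts our choice of $q$.

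\emph{At most two exceptional lines.} Suppose $l_1,l_2,l_3$ are three distinct exceptional lines, with associated algebraic curves $C_1,C_2,C_3$ produced by Lemma~\ref{exceptional}(2). Pick a generic $q\in S$ lying outside $C_1\cup C_2\cup C_3$, outside the $l_i$, and outside the proper subvariety of $S$ where more than one line of $S$ meet. Then $q$ lies on a unique line $m\subset S$, and by the lemma $m$ must meet each $l_i$ at a non-exceptional point. As $q$ varies over a dense open subset of $S$, the corresponding $m$ sweep out a dense subset of $S$, so $S$ is contained in the Zariski closure of the union $T$ of all lines transversal to $l_1,l_2,l_3$.

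Now one makes a short case analysis on the configuration of $l_1,l_2,l_3$. If the three lines are pairwise skew, then classically $T$ is precisely the unique regulus $R$ through them; by irreducibility of $S$ one concludes $S=R$, contradicting that $S$ is not a regulus. Otherwise at least two of the $l_i$ are coplanar, and a direct geometric check shows that $T$ is contained in a union of at most two planes, whence $S$ must itself be one of those planes, contradicting that $S$ is not a plane. The delicate subcase is when $l_1,l_2,l_3$ are concurrent at a common point $p$: the non-exceptional-endpoint requirement means we must forbid $m$ from passing through $p$ (since $p$ is exceptional as soon as three lines of $S$ meet there), and a short linear-algebra argument shows that when the $l_i$ are non-coplanar no such $m$ can exist at all, while if the $l_i$ are coplanar every such $m$ lies in their common plane. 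The main obstacle in executing the plan is carrying out this case analysis cleanly while keeping track of the non-exceptional-endpoint condition, but in every branch the conclusion is that $S$ is forced to be either a plane or a regulus, contradicting the hypothesis.
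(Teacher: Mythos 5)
Your approach is the same as the paper's: combine Lemma~\ref{exceptional} with the singly-ruled property of $S$, then case-split on the configuration of $l_1,l_2,l_3$. The exceptional-point argument is essentially identical to the paper's. The exceptional-line argument is also in the same spirit, but one justification is incorrect as written: you assert ``$p$ is exceptional as soon as three lines of $S$ meet there,'' whereas the paper's definition requires \emph{infinitely many} lines of $S$ through a point before it is exceptional. The correct reasoning in the concurrent subcase is that if infinitely many of the generators $m$ passed through $p$, then $p$ would become exceptional and Lemma~\ref{exceptional}(2) would then forbid $m$ from meeting the $l_i$ at $p$; and if only finitely many $m$ pass through $p$, those can simply be discarded. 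Either way one retains infinitely many generators meeting all three $l_i$ away from their pairwise intersection points, which is exactly the reduction the paper performs before its short dichotomy (two of the $l_i$ coplanar forces infinitely many lines of $S$ into a single plane; pairwise skew forces infinitely many lines of $S$ into a regulus). Relatedly, your claim that $T$ lies in a union of at most two planes is only valid after this restriction is in place: without it, if $l_1\cap l_2=\{P\}$ then $T$ also contains the whole cone of lines through $P$, which is not planar. These are fixable slips rather than structural gaps; once corrected, your proof coincides with the paper's.
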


We now prove Corollary \ref{takingexception}

\begin{proof} Let $(x_0,y_0,z_0)$ and $(x_1,y_1,z_1)$ be distinct exceptional points of $S$. Since $S$ is singly-ruled, the generic point
of $S$ is contained in only a single line $l$ contained in $S$. Thus by Lemma \ref{exceptional}, if the point is different 
from $(x_0,y_0,z_0)$ and $(x_1,y_1,z_1)$, this line $l$ must contain both $(x_0,y_0,z_0)$ and $(x_1,y_1,z_1)$ . But there is
only one such line, and that is a contradiction.

Now let $l_1,l_2,l_3$ be exceptional lines of $S$. There are curves $C_1,C_2,$ and $C_3$ so that the generic point in the complement of
$C_1,C_2,$ and $C_3$ lies on only one line contained in $S$ and this line must intersect each of $l_1,l_2,$ and $l_3$. Thus there
are infinitely many lines contained in $S$ which intersect each of $l_1,l_2,$ and $l_3$. (Moreover, since the lines are exceptional, there
must be an infinite set of lines which intersect the three away from the possible three points of intersection of any two of $l_1,l_2,$ and $l_3$.)
If the any two of the three lines are coplanar, this means there is an infinite set of lines contained in $S$ which lie in one plane.
This contradicts the irreducibility and nonplanarity of $S$. If contrariwise, the three lines $l_1,l_2,$ and $l_3$
are pairwise skew, then the set of all lines which intersect all three are one ruling of a regulus. In this case, $S$ contains
infinitely many lines of a regulus which contradicts the fact that $S$ is irreducible and not a regulus.

\end{proof}

For context, we remark that an irreducible singly-ruled surface with an exceptional point is often referred to as a cone
and the exceptional point is referred to as the cone point.  Irreducible ruled surfaces with two exceptional lines do exist:
one way of constructing a ruled surface with two exceptional lines is to
choose a curve in the two-dimensional set of lines which intersect a pair of skew lines.

At last, we may begin the proof of Lemma \ref{ruled}.

\begin{proof}
We say that a point $(x,y,z)$ is exceptional for the surface $p=0$, if it is exceptional
for $p_j=0$ where $p_j$ is one of the irreducible factors of $p$. We say that a line $l$
is exceptional for the surface $p=0$ if it is exceptional for $p_j=0$ where $p_j$ is
one of the irreducible factors of $p$. Thus, in light of Corollary \ref{takingexception},
there are no more than $N$ exceptional points and $2N$ exceptional lines for $p=0$. 
Thus
there are $\lesssim N^3$ intersections between exceptional lines and lines of $\frak L_1$.
Thus to prove the lemma, we need only consider intersections between nonexceptional lines
of $\frak L_1$ at nonexceptional points.

We note that any line contained in a ruled surface which is not a generator must be an 
exceptional line since each point of the line will have a generator going through it.  (The definition
of a ruled surface is that every point lies in a line in the surface.  Since there are only finitely many
non-generators, almost every point must lie in a generator.  But in fact every point lies in a generator
by a limiting argument.  Let $q$ be a point in the ruled surface and let $q_i$ be a sequence of points
that converge to $q$ with $q_i$ lying in a generator $l_i$.  By taking a subsequence, we can arrange
that the directions of the $l_i$ converge, and so the lines $l_i$ converge to a limit line $l$ which contains
$q$ and lies in the surface.  This line is a limit of generators, and so it is a generator.)

Let $l$ be a non-exceptional line in the ruled surface.  In particular $l$ is a generator.  We claim
that there are at most $N-1$ non-exceptional points in $l$ where $l$ intersects another non-exceptional
line in the ruled surface.  This claim implies that there are at most $(N-1) N^2$ non-exceptional points where two
non-exceptional lines intersect, proving the bound we want.

To prove the claim, we repeat
an argument found in \cite{Salm} Art 485 pages 88-89.  Choose a plane $\pi$
through the generator $l$.  The plane intersects the surface in a curve of degree $N$. One component is the generator
itself. The other component is an algebraic curve $c$ of degree $N-1$.  There are at most $N-1$ points of intersection
between $l$ and $c$.  Suppose that $l'$ is another non-exceptional line and that $l'$ intersects $l$ at a non-exceptional
point $q$.  It suffices to prove that $q$ lies in the curve $c$.  Since $l'$ is a generator, it lies in a continuous 1-parameter family
of other generators.  Consider a small open set of generators around $l'$.  These generators intersect the plane $\pi$.  So each of them intersects either $l$ or $c$.  Since $q$ is non-exceptional, only finitely many of them intersect $q$.  Since there are only
finitely many exceptional points, we can arrange that each generator in our small open set intersects $\pi$ in a non-exceptional point.
Since $l$ is non-exceptional, only finitely many of our generators can intersect
$l$.  Therefore, almost all of our generators must intersect $c$.  This is only possible if $q$ lies in $c$.
 \end{proof}

Now we are ready to begin the proof of Theorem \ref{theorem1}. We assume we have a set $\frak L$ of at most
$N^2$ lines for which no more than $N$ lie in a plane and no more than $N$ lie in a regulus. We  suppose, by
way of contradiction, that for $Q$, a positive real number sufficiently large, there are $Q N^3$ points of
intersection of lines of $\frak L$ and we assume that this is an optimal example, so that for no $M < N$ do we
have a set of $M^2$ lines so that no more than $M$ lie in a plane and no more than $M$ lie in a regulus
is it the case that there are more than $QM^3$ intersections. ($N$ need not be an integer.)

We now apply a degree reduction argument similar to the one in \cite{GK}. We let $\frak L^{\prime}$
be the subset of $\frak L$ consisting of lines which intersect other lines of $\frak L$ in at least ${QN \over 10}$ 
different points.
The lines not in $\frak L^{\prime}$ participate in at most ${QN^3 \over 10}$ points of intersection. Thus there
are at least ${9 Q N^3 \over 10}$ points of intersection between lines of  $\frak L^{\prime}$. We define a number
$\alpha$ with $0 < \alpha \le 1$ so that $\frak L^{\prime}$
has $\alpha N^2$ lines.

Now we select a random subset $\frak L^{\prime \prime}$ of the lines of $\frak L^{\prime}$ choosing lines
independently with probability ${100 \over Q}$. With positive probability, there will be no more than
${200 \alpha N^2 \over Q}$ lines in $\frak L^{\prime \prime}$ and each line of $\frak L^{\prime}$ will
intersect lines of $\frak L^{\prime \prime}$ in at least $N$ different points.  Now pick ${R \sqrt{\alpha} N \over \sqrt{Q}}$
points on each line of $\frak L^{\prime \prime}$. ($R$ is a constant which is sufficiently large but universal.)
Call the set of all of the points $\frak S$. There are
$O({R \alpha^{{3 \over 2}} N^3 \over Q^{{3 \over 2}}})$ points in $\frak S$, so we may find a polynomial $p$ of
degree $O({ R^{{1 \over 3}}\alpha^{{1 \over 2}} N \over Q^{{1 \over 2}}})$ which vanishes on every point of $\frak S$.
With $R$ sufficiently large, $p$ must vanish identically on every line of  $\frak L^{\prime \prime}$.
Since each line of $\frak L^{\prime}$ meets $\frak L^{\prime \prime}$ at $N$ different points, it must be that
$p$ vanishes identically on each line of $\frak L^{\prime}$. Thus ends the degree reduction argument and we
will now study the relatively low degree polynomial $p$.

We may factor $p=p_1 p_2$ where $p_1$ is the product of the ruled irreducible factors of $p$ and $p_2$ is the
product of unruled irreducible factors of $p$. Each of $p_1$ and $p_2$ is of degree
$O({\alpha^{{1 \over 2}} N \over Q^{{1 \over 2}}})$. (We have suppressed the $R$ dependence since $R$ is universal.)
We break up the set of lines of $\frak L^{\prime}$ into the disjoint subsets $\frak L_1$ consisting of those
lines in the zero set of $p_1$ and $\frak L_2$ consisting of all the other lines in $\frak L^{\prime}$.

There are no more than $O(N^3)$ points of intersection between lines of $\frak L_1$ and $\frak L_2$ since each line
of $\frak L_2$ contains no more than $O({\alpha^{{1 \over 2}} N \over Q^{{1 \over 2}}})$ points where
$p_1$ is zero.  Thus we are left with two (not mutually-exclusive) cases which cover all possibilities. There
are either ${3 Q N^3 \over 10}$ points of intersection between lines of $\frak L_1$  or there are
${3 Q N^3 \over 10}$ points of intersection between lines of $\frak L_2$. We will handle these separately.

Suppose there are ${3 Q N^3 \over 10}$ intersections between lines of $\frak L_1$. We factor $p_1=p_3 p_4$
where $p_3$ is plane-free and regulus-free and $p_4$ is a product of planes and reguli. We break
$\frak L_1$ into disjoint sets $\frak L_3$ and $\frak L_4$, with $\frak L_3$ consisting of
lines in the zero set of $p_3$  and $\frak L_4$ consisting of all other lines of $\frak L_1$. As before
there $O(N^3)$ points of  intersection between lines of $\frak L_3$ and $\frak L_4$ since lines of $\frak L_4$
are not in the zero set of $p_3$. Moreover there are at most $O(N^3)$ points of intersection between
lines of $\frak L_4$ because they lie in at most $N$ planes and reguli each containing at most $N$ lines.
(We just see that each line has at most $O(N)$ intersections with planes and reguli it is not contained in
and there are at most $O(N^2)$ points of intersection between lines internal to each plane and regulus.)
However there cannot be more than $O(N^3)$ points of intersection between lines of $\frak L_3$ by applying the
key lemma \ref{ruled}. (Here we used that $p_3$ is plane-free and regulus-free.)

Thus we must be in the second case, where many of the points of intersection are between lines of $\frak L_2$,
all of which lie in the zero set of $p_2$ which is totally unruled. Recall that $p_2$ is of degree
$O({\alpha^{{1 \over 2}} N \over Q^{{1 \over 2}}})$. Thus by Corollary \ref{flecbound}, its zero-set
contains no more than $O({\alpha N^2 \over Q})$ lines. We would like to now invoke the fact that the example we
started with was optimal and reach a contradiction. But we can't quite do that. Our set
$\frak L_2$ has $\beta N^2$ lines with $\beta=O({\alpha \over Q})$  and we only know that there are no more than
$N$ lines in any plane or regulus, whereas we need to know that there are no more than $\sqrt{\beta} N$ lines.
If this is the case we are done. If not we construct a subset $\frak L_5$ as follows. If there is a plane
or regulus containing more than $\sqrt{\beta} N$ lines of $\frak L_2$, we put those lines in $\frak L_5$ and
remove them from $\frak L_2$. We repeat as needed labelling the remaining lines $\frak L_6$. Since we removed
$O(N)$ planes and reguli, there are $O(N^3)$ points of intersection between lines of $\frak L_5$. Since
no lines of $\frak L_6$ belong to any plane or regulus of $\frak L_5$ there are fewer than $O(N^3)$ points
of intersection between lines of $\frak L_5$ and $\frak L_6$.  Now we apply optimality of our original
example to rule out more than $O({N^3 \over Q^{{1 \over 2}}})$ points of intersection between lines of
$\frak L_6$. Thus we have reached a contradiction.

\section{Cell decompositions}

In this section, we construct a new type of cell decomposition of ${\bf R^n}$, where the walls of the cells
are the zero set of a polynomial.  We use this type of cell decomposition to prove an incidence theorem for lines
in ${\bf R^3}$ when not too many lines lie in a plane.  The cell decomposition is described in the following
theorem.

\begin{theorem} \label{polycell} If $\frak S$ is a set of $S$ points in ${\bf R^n}$ and $J \ge 1$ is an integer,
then there is a polynomial surface $Z$ of degree $d \lesssim 2^{J/n}$ with the following property.  The complement ${\bf R^n}
\setminus Z$ is the union of $2^J$ open cells $O_i$, and each cell contains $\le 2^{-J} S $ points of $\frak S$.
\end{theorem}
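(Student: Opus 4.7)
The plan is to iterate the polynomial ham sandwich theorem. Recall that the polynomial ham sandwich theorem (Stone--Tukey, via the Borsuk--Ulam theorem applied to the Veronese embedding) states that given $M$ finite Borel measures on ${\bf R^n}$, there exists a nonzero polynomial $p$ of degree $d$ with $\binom{n+d}{n} - 1 \ge M$ such that each of the two open sets $\{p > 0\}$ and $\{p < 0\}$ has at most half the mass of every one of the $M$ measures. Since $\binom{n+d}{n} \sim d^n/n!$, we may always take $d \lesssim M^{1/n}$.

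I would construct $Z$ as a product $Z = p_1 p_2 \cdots p_J$ of polynomials chosen inductively. The invariant at stage $k$ is that the $k-1$ polynomials $p_1,\dots,p_{k-1}$ partition ${\bf R^n}$ into at most $2^{k-1}$ open sign cells (one for each sign vector in $\{+,-\}^{k-1}$), and that each such cell contains at most $2^{-(k-1)}S$ points of $\frak S$. Applying polynomial ham sandwich to the $2^{k-1}$ point measures obtained by restricting counting measure on $\frak S$ to each current cell, I get a polynomial $p_k$ of degree
\[
d_k \;\lesssim\; \bigl(2^{k-1}\bigr)^{1/n} \;=\; 2^{(k-1)/n}
\]
whose sign simultaneously bisects the point count in every existing cell. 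This refines the partition into at most $2^k$ open sign cells, each containing at most $2^{-k}S$ points of $\frak S$, maintaining the invariant.

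After $J$ stages, set $Z = \{p_1 p_2 \cdots p_J = 0\}$. The degree bound is
\[
\deg Z \;=\; \sum_{k=1}^{J} d_k \;\lesssim\; \sum_{k=1}^{J} 2^{(k-1)/n} \;=\; \frac{2^{J/n}-1}{2^{1/n}-1} \;\lesssim_n\; 2^{J/n},
\]
since the geometric series of ratio $2^{1/n}>1$ is dominated by its largest term up to an $n$-dependent constant. The complement of $Z$ is by definition the union of the (at most) $2^J$ open sign cells of $p_1 \cdots p_J$, and by the invariant each one contains at most $2^{-J}S$ points of $\frak S$, as required.

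The one delicate step is handling points of $\frak S$ that happen to lie on the zero set of some $p_k$: the ham sandwich guarantee is that each open halfspace has at most half the mass, but points on the surface are dropped from both subsequent cells. This is harmless for our statement, which only asserts a bound on the number of points inside each open cell and makes no claim about points on $Z$; discarding points only decreases the counts. If one prefers a cleaner bisection at every intermediate stage, the standard remedy is to replace each point by a small ball of mass $1$ and take a compactness limit, which I regard as routine rather than as the main obstacle. The real content of the argument is the geometric degree accounting above, which is driven entirely by the dimension count $\binom{n+d}{n} \gtrsim d^n$ in the ham sandwich input.
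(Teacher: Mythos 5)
Your proof is correct and follows essentially the same iterated polynomial ham sandwich construction as the paper, including the degree accounting via the geometric series $\sum 2^{(k-1)/n}$ and the product $p_1\cdots p_J$. The "delicate step" you flag at the end is exactly what the paper addresses in its Corollary 4.4 (the finite-set version of ham sandwich), and your proposed remedy (thickening points to small balls and passing to a limit) is the one the paper uses.
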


Remark: Some or all of the points of $\frak S$ may lie inside the surface $Z$.  Recall that $Z$ is not part of any of the open
sets $O_i$.  So there are two extreme cases in Theorem \ref{polycell}.  In one extreme, all the points of $\frak S$ lie in the open
cells $O_i$, and there are exactly $2^{-J} S$ points in each cell.  In the other extreme, all the points of $\frak S$ lie in the surface
$Z$. When the points all lie in $Z$, the theorem does not give any information about where in $Z$ they lie.

The proof of Theorem \ref{polycell} is based on the polynomial ham sandwich theorem of Stone and Tukey \cite{ST}.  
For context, we first recall the original ham sandwich theorem.

\begin{theorem} \label{origham} (Ham sandwich theorem) If $U_1, ..., U_n \subset {\bf R^n}$ are finite volume
open sets, then there is a hyperplane which bisects each set $U_i$.
\end{theorem}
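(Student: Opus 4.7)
The plan is to deduce the ham sandwich theorem from the Borsuk--Ulam theorem, following the classical argument. First I would parameterize the set of oriented affine hyperplanes in ${\bf R}^n$ by the unit sphere $S^n \subset {\bf R}^{n+1}$: to a point $a = (a_0, a_1, \ldots, a_n) \in S^n$ associate the oriented hyperplane
$$ H_a = \{ x \in {\bf R}^n : a_1 x_1 + \cdots + a_n x_n = a_0 \}, $$
with positive half-space $H_a^+ = \{ x : a_1 x_1 + \cdots + a_n x_n \ge a_0 \}$ and negative half-space $H_a^-$ defined by the reverse inequality. The antipodal point $-a$ gives the same geometric hyperplane but with the two half-spaces swapped.

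Next, for each $i = 1, \ldots, n$, I would define
$$ f_i(a) := \operatorname{vol}(U_i \cap H_a^+) - \operatorname{vol}(U_i \cap H_a^-). $$
Finite volume of $U_i$ plus dominated convergence gives continuity of $f_i$ on $S^n$, and the swapping of half-spaces under $a \mapsto -a$ gives $f_i(-a) = -f_i(a)$. Thus the map $F : S^n \to {\bf R}^n$ defined by $F(a) = (f_1(a), \ldots, f_n(a))$ is continuous and odd.

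Now I would apply the Borsuk--Ulam theorem to $F$: there exists $a^* \in S^n$ with $F(a^*) = F(-a^*)$, and since $F$ is odd this forces $F(a^*) = 0$. By the definition of $F$, the hyperplane $H_{a^*}$ simultaneously bisects every $U_i$. The only subtlety to check is that $a^*$ actually corresponds to a genuine hyperplane, i.e.\ that $(a_1^*, \ldots, a_n^*) \ne 0$; if instead $a^* = (\pm 1, 0, \ldots, 0)$, then one of $H_{a^*}^\pm$ is empty and the other is all of ${\bf R}^n$, so $|f_i(a^*)| = \operatorname{vol}(U_i) > 0$ for every $i$ of positive volume, contradicting $F(a^*) = 0$. (Sets $U_i$ of zero volume are trivially bisected by any hyperplane, so one may assume they all have positive volume.) The main and only real obstacle is the appeal to Borsuk--Ulam itself; everything else is bookkeeping about the parameterization and continuity of volumes under motion of the half-space.
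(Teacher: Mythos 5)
Your proof is correct and is the standard Borsuk--Ulam argument. The paper does not actually prove Theorem~\ref{origham}; it cites it as a classical fact and notes precisely this lineage (Banach's $n=3$ proof via Borsuk--Ulam, generalized by Stone and Tukey), so your argument matches the method the paper points to. The parameterization by $S^n$, the oddness of $F$ under the antipodal map, the continuity via dominated convergence (using finiteness of the volumes), and the degenerate-pole check at $a^* = (\pm 1, 0, \ldots, 0)$ are all handled correctly.
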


The ham sandwich theorem was proven in the case $n=3$ by Banach in the late 30's, using the Borsuk-Ulam theorem.
In 1942, Stone and Tukey generalized Banach's proof to all dimensions.  They also observed that the same
argument applies to many other situations.  In particular, they proved the following polynomial version of the ham 
sandwich theorem.  

We say that an algebraic hypersurface $p(x_1,\dots,x_n)=0$ bisects a finite volume open set $U$ if

$$ Vol ( U \cap \{p < 0\} ) = Vol (U \cap \{p > 0 \}) = (1/2) Vol (U). $$

\begin{theorem} \label{ham} (Stone-Tukey, \cite{ST}) For any degree $d \ge 1$, the following holds.  
Let $U_1,\dots, U_M$ be any finite volume open sets in ${\bf R}^n$, 
with $M={n+d \choose n} - 1$. Then there is a real algebraic hypersurface of degree at most $d$
that bisects each $U_i$. \end{theorem}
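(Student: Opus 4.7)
The plan is to mimic Banach's classical proof of the ordinary ham sandwich theorem (Theorem~\ref{origham}) via the Borsuk--Ulam theorem, with the observation (due to Stone and Tukey) that the argument uses only the odd symmetry of the volume functionals under $p \mapsto -p$ and the finite dimensionality of the parameter space, not anything particular to linear functions. So I would simply replace the $(n+1)$-dimensional space of affine functions by the $(M+1)$-dimensional space of polynomials of degree $\le d$.

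Let $V$ denote the real vector space of polynomials in $x_1,\dots,x_n$ of degree at most $d$; a monomial basis shows $\dim V = \binom{n+d}{n} = M + 1$. Fix any inner product on $V$ and let $S^M \subset V$ be the unit sphere, so that antipodal points of $S^M$ correspond to polynomials that differ by a sign. For each $i = 1,\dots,M$ define
$$ f_i : S^M \to \mathbb{R}, \qquad f_i(p) = \mathrm{Vol}\bigl(U_i \cap \{p > 0\}\bigr) - \mathrm{Vol}\bigl(U_i \cap \{p < 0\}\bigr). $$
Each $f_i$ is odd because any $p \in S^M$ is a nonzero polynomial, so its zero set $\{p = 0\}$ has Lebesgue measure zero, and thus swapping $p \mapsto -p$ swaps the two open sets on the right-hand side. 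For continuity, suppose $p_k \to p$ in $V$; then $\mathbf{1}_{\{p_k > 0\}} \to \mathbf{1}_{\{p > 0\}}$ almost everywhere on $U_i$ (again because $\{p = 0\}$ has measure zero), and since $\mathrm{Vol}(U_i) < \infty$, the dominated convergence theorem gives continuity of each volume term.

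Now assemble the continuous odd map $F = (f_1,\dots,f_M) : S^M \to \mathbb{R}^M$. The Borsuk--Ulam theorem produces a point $p_\ast \in S^M$ with $F(p_\ast) = 0$; then $p_\ast$ is a nonzero polynomial of degree $\le d$ whose zero hypersurface simultaneously bisects every $U_i$, which is exactly the conclusion.

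The only real subtlety I expect is verifying that each $f_i$ is continuous on all of $S^M$, and this rests on the elementary but crucial fact that the zero set of a nonzero polynomial in $\mathbb{R}^n$ has Lebesgue measure zero. Once continuity and oddness are in hand, the proof reduces to a one-line application of Borsuk--Ulam; the Stone--Tukey insight is precisely that nothing in Banach's original argument required the bisecting family to be linear, only that it be a vector space of continuous functions carrying the symmetry $p \mapsto -p$.
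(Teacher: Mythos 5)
Your proposal is correct and is precisely the Borsuk--Ulam argument that the paper attributes to Banach and Stone--Tukey; the paper itself does not reprove the theorem but simply cites \cite{ST} and \cite{G} for the same approach. The dimension count $\dim V = \binom{n+d}{n} = M+1$, the oddness and continuity of the volume-difference functionals (the latter resting on the fact that a nonzero polynomial vanishes on a Lebesgue-null set), and the application of Borsuk--Ulam on $S^M$ are all exactly right.
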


(For a recent exposition of the proof, see \cite{G}.)

We now adapt Theorem \ref{ham} to finite sets of points. Instead of open sets $U_i$, we will have finite
sets $S_i$.  We say that
a polynomial $p$ bisects a finite set $S$ if at most half the points in $S$ are in $\{ p > 0 \}$ and at most
half the points in $S$ are in $\{ p < 0\}$. Note that $p$ may vanish on some or all of the points of $S$.

\begin{corollary} \label{spam} Let $S_1,\dots, S_M$ be finite sets of points in  ${\bf R}^n$
 with $M={n+d \choose n} - 1$. Then there is a real algebraic hypersurface of degree at most $d$
that bisects each $S_i$. \end{corollary}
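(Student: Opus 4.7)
The plan is to deduce Corollary \ref{spam} from the open-set version, Theorem \ref{ham}, via an approximation argument: thicken each finite set $S_i$ into a finite-volume open set by replacing each point with a small ball, apply the polynomial ham sandwich theorem to these open sets, and pass to a limit in the space of polynomials. The heuristic is that a polynomial that volume-bisects the union of small balls centered at the points of $S_i$ should, in the limit of shrinking balls, count-bisect the points of $S_i$ themselves.

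In detail, for each sufficiently small $\epsilon > 0$ (smaller than half the minimum distance between any two distinct points in the same $S_i$), set $U_i^\epsilon := \bigcup_{x \in S_i} B(x,\epsilon)$, a disjoint union of balls of total volume $|S_i| \cdot \mathrm{vol}(B_\epsilon)$. By Theorem \ref{ham}, there is a nonzero real polynomial $p_\epsilon$ of degree at most $d$ whose zero set bisects each $U_i^\epsilon$ by volume. Normalize $p_\epsilon$ to lie on the unit sphere of the finite-dimensional vector space $V_d$ of real polynomials on $\mathbf{R}^n$ of degree at most $d$. Compactness of this sphere yields a sequence $\epsilon_k \to 0$ along which $p_{\epsilon_k}$ converges (uniformly on compact sets, since all norms on $V_d$ are equivalent) to a limit polynomial $p \in V_d$ of unit norm; in particular $p$ is nonzero and has degree at most $d$.

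It remains to check that $p$ bisects each $S_i$ in the combinatorial sense. Fix $i$ and let $a_i$, $b_i$ denote the number of points of $S_i$ on which $p$ is positive, respectively negative. For any $x \in S_i$ with $p(x) > 0$, the continuity of $p$ yields $r, \delta > 0$ with $p > \delta$ on $B(x,r)$; uniform convergence $p_{\epsilon_k} \to p$ on $B(x,r)$ combined with $\epsilon_k \to 0$ then forces $p_{\epsilon_k} > 0$ on the entire ball $B(x, \epsilon_k)$ for all large $k$. Since the balls constituting $U_i^{\epsilon_k}$ are disjoint, this yields
\[
\mathrm{vol}\bigl( U_i^{\epsilon_k} \cap \{p_{\epsilon_k} > 0\} \bigr) \ge a_i \cdot \mathrm{vol}(B_{\epsilon_k}),
\]
while the bisection property of $p_{\epsilon_k}$ on $U_i^{\epsilon_k}$ gives the upper bound $\tfrac12 |S_i| \cdot \mathrm{vol}(B_{\epsilon_k})$. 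Dividing through yields $a_i \le |S_i|/2$, and the symmetric argument applied to $\{p_{\epsilon_k} < 0\}$ gives $b_i \le |S_i|/2$, proving that $p$ bisects $S_i$.

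The only real subtlety is ensuring that the subsequential limit $p$ is not the zero polynomial, since then the bisection conclusion would be vacuous (or false, depending on convention). This is exactly why we normalize the $p_\epsilon$ onto the unit sphere of the finite-dimensional space $V_d$ before extracting a subsequence: compactness of the sphere forces the limit to have unit norm, and hence to be a genuine nonzero polynomial of degree at most $d$.
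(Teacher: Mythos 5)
Your proof is correct and follows essentially the same route as the paper: thicken the finite sets into disjoint unions of small balls, apply the Stone--Tukey polynomial ham sandwich theorem, normalize the resulting polynomials in the finite-dimensional space of polynomials of degree at most $d$, extract a convergent subsequence, and verify the count-bisection in the limit by a uniform-convergence argument. The paper phrases the final verification as a proof by contradiction while you argue directly with the inequality $a_i \le |S_i|/2$, but this is a cosmetic difference.
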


\begin{proof} For each $\delta > 0$, define $U_{i, \delta}$ to be the union of $\delta$-balls centered at
the points of $S_i$.  By the polynomial ham sandwich theorem, Theorem \ref{ham}, we can find a non-zero
polynomial $p_\delta$ of degree $\le d$ that bisects each set $U_{i, \delta}$.  

We want to take a limit of the polynomials $p_\delta$ as $\delta \rightarrow 0$.  To help make this work,
we pick a norm $\| \|$ on the space of polynomials of degree $\le d$.  Any norm will do - to be definite,
let $\| p \|$ denote the maximal absolute value of the coefficients of $p$.  By scaling $p_\delta$, we can
assume that $\| p_\delta \| = 1$ for all $\delta$.  Now we can find a sequence $\delta_m \rightarrow 0$
so that $p_{\delta_m}$ converges in the space of degree $\le d$ polynomials.  We let $p$ be the limit
polynomial and observe that $\| p \| =1$.  In particular, $p$ is not 0.  Since the coefficients of $p_{\delta_m}$
converge to the coefficients of $p$, it's easy to check that $p_\delta$ converges to $p$ uniformly on compact
sets.

We claim that $p$ bisects each set $S_i$.  We prove the claim by contradiction.  Suppose instead that
$p > 0$ on more than half of the points of $S_i$.  (The case $p < 0$ is similar.)  Let $S_i^+ \subset S_i$ denote the
set of points of $S_i$ where $p > 0$.  By choosing $\epsilon$
sufficiently small, we can assume that $p > \epsilon$ on the $\epsilon$-ball around each point 
of $S_i^+$.  Also, we can choose $\epsilon$ small enough that the $\epsilon$-balls around the points of $S_i$
are disjoint.  Since $p_{\delta_m}$ converges to $p$ uniformly on compact sets, we can find $m$ large enough
that $p_{\delta_m} > 0$ on the $\epsilon$-ball around each point of $S_i^{+}$.  By making $m$ large, we can also
arrange that $\delta_m < \epsilon$.  Therefore, $p_{\delta_m} > 0$ on the $\delta_m$-ball around each point of $S_i^+$.
But then $p_{\delta_m} > 0$ on more than half of $U_{i, \delta_m}$.  This contradiction proves that $p$ bisects $S_i$.
\end{proof}

Using this finite polynomial ham sandwich theorem, we can quickly prove Theorem \ref{polycell}.

\begin{proof}[Proof of Theorem \ref{polycell}]  We do the construction in $J$ steps.  In the first
step, we pick a linear polynomial $p_1$ that bisects $\frak S$.  We let $\frak S^+$ and $\frak S^-$
be the sets where $p_1$ is positive and negative, respectively.  In the second step, we find a polynomial
$p_2$ that bisects $\frak S^+$ and $\frak S^-$.  And so on.  At each new step, we use Corollary \ref{spam}
to bisect the sets from the previous step.

We now describe the inductive procedure a little more precisely.  At the end of step $j$, we have defined
$j$ polynomials $p_1, ..., p_j$.  We define $2^j$ subsets of $\frak S$ by looking at the points where the polynomials
$p_1, ..., p_j$ have specified signs.  Then we use Corollary \ref{spam} to bisect each of these $2^j$ sets.  It follows 
by induction that each subset contains $\le 2^{-j} S$ points.

Finally, we let $p$ be the product $p_1 ... p_J$, and we let $Z$ denote the zero set of $p$.

First we estimate the degree of $p$.  By Corollary \ref{spam}, the degree of $p_j$ is $\lesssim 2^{j/n}$.  Hence
the degree of $p$ is $d \lesssim \sum_{j=1}^J 2^{j/n} \lesssim 2^{J/n}$.

Now we define the $2^J$ open sets $O_i$ as the sets where the polynomials $p_1, ..., p_J$ have specified signs.
For example, one of the sets $O_i$ is defined by the inequalities $p_1(x) > 0, p_2(x) < 0, p_3 (x) > 0, ..., p_J(x) > 0$.  
The sets $O_i$ are open and disjoint.  Their union is exactly the complement of $Z$.  As we saw above, the number of points in 
$\frak S \cap O_i$ is at most $2^{-J} S$. \end{proof}

Using this type of cell decomposition, we will prove an estimate for incidences of lines when not too many lines lie in a plane.

\begin{theorem} \label{incidence}  Let $k \ge 3$.  Let $\frak L$ be a set of $L$ lines in ${\bf R^3}$ with
at most $B$ lines in any plane.  Let $\frak S$ be the set of points in ${\bf R^3}$ intersecting
at least $k$ lines of $\frak L$.  Then the following inequality holds:

$$ | \frak S | \le C [ L^{3/2} k^{-2} + L B k^{-3} + L k^{-1}]. $$

\end{theorem}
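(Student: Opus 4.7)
The plan is to apply the polynomial cell decomposition of Theorem \ref{polycell} to the set $\frak S$ and then split the analysis between a cellular case and an algebraic case based on where the points lie.

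Set $S := |\frak S|$, pick a parameter $d$, and apply Theorem \ref{polycell} with $2^J \sim d^3$; this produces a polynomial $p$ of degree $\le d$ with zero set $Z$, together with $\sim d^3$ open cells $O_i$ each containing at most $\sim S/d^3$ points of $\frak S$. Split $\frak S = \frak S_{\text{cell}} \sqcup \frak S_Z$ according to whether a point lies in an open cell or on $Z$. In the cellular case ($|\frak S_{\text{cell}}| \ge S/2$), I would bound incidences cell by cell. For each cell $O_i$ let $L_i$ denote the lines of $\frak L$ meeting $O_i$; since lines not in $Z$ meet $Z$ in $\le d$ points and so enter $\le d+1$ cells, $\sum_i L_i \le L(d+1)$. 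The pair bound (each $k$-rich point accounts for $\binom{k}{2}$ intersecting pairs, while the total number of intersecting pairs in a cell is $\le \binom{L_i}{2}$) gives $|\frak S \cap O_i| \lesssim L_i^2/k^2$, while Theorem \ref{polycell} gives a cap $|\frak S \cap O_i| \lesssim S/d^3$. Dyadically separating cells by which of these bounds is tighter and summing yields $|\frak S_{\text{cell}}| \lesssim L\sqrt S/(k\sqrt d)$, hence $S \lesssim L^2/(k^2 d)$ in this case.

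In the algebraic case ($|\frak S_Z| \ge S/2$), I would classify each point of $\frak S_Z$ by how many of its incident lines lie in $Z$. Points with $\ge k/2$ incident lines not in $Z$ form $\frak S_Z^a$, and since lines outside $Z$ contribute $\le Ld$ total incidences to $Z$, $|\frak S_Z^a| \le 2Ld/k$. Points with $\ge k/2$ incident lines in $Z$ form $\frak S_Z^b$. To handle these, factor $p = p_{\text{pl}} \, p_{\text{np}}$ into the product of its linear factors and the rest. Planar components contribute via the Szemerédi-Trotter theorem (Theorem \ref{Szt}) applied in each of the $\lesssim d$ planes using the $\le B$-lines-per-plane hypothesis; summing over planes, and using that their total line count is $\le L$, yields $\lesssim LB/k^3 + L/k$. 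Non-planar components are controlled by the flecnode apparatus of Section 3: by Corollary \ref{flecbound}, either a non-planar irreducible component contains $\le 11 d^2 - 24 d$ lines (the pair bound handles these), or it is ruled, in which case after pruning away regulus factors (which for $k \ge 3$ carry few rich points since a generic point of a regulus lies on only two lines), Lemma \ref{ruled} yields a rich-point count $\lesssim d^3/k^2$.

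The principal obstacle is balancing these contributions by the choice of $d$. Taking $d \sim L^{1/2}$ sends both the cellular bound and the ruled-surface bound to $L^{3/2}/k^2$ and the planar contribution is already within the target, but the $\frak S_Z^a$ bound $Ld/k$ degrades to $L^{3/2}/k$, short of the target by a factor of $k$. My plan to resolve this is induction on $L$: if $\frak L_Z$ is nonempty, the points in $\frak S_Z^a$ are $(k/2)$-rich for the strictly smaller line set $\frak L \setminus \frak L_Z$ (which still satisfies the $\le B$-per-plane hypothesis), so the inductive hypothesis bounds them by the right form up to an absolute constant loss from replacing $k$ by $k/2$. The remaining degenerate case $\frak L_Z = \frak L$ (all lines lie in $Z$) is handled by an internal algebraic argument on $Z$ using the critical/flat-point analysis of \cite{GK,EKS}, precisely as flagged in the introduction. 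The hardest part is orchestrating this induction and the various algebraic sub-cases so that all constants remain absolute and combine cleanly to give the claimed bound $|\frak S| \lesssim L^{3/2}/k^2 + LB/k^3 + L/k$.
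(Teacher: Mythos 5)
Your high-level plan---apply the polynomial cell decomposition from Theorem~\ref{polycell} and split into a cellular case and an algebraic case---is indeed the skeleton of the paper's argument, but the execution diverges at a point that turns out to be decisive, and the gap you flag at the end is not an orchestration issue but a missing idea.

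The decisive difference is the choice of the degree~$d$. You fix $d \sim L^{1/2}$ up front, which balances the cellular bound $L^2/(k^2 d)$ against the naive algebraic bound $d^3/k^2$, but then the ``leakage'' term $|\frak S_Z^a| \lesssim Ld/k$ coming from incidences between $Z$ and lines not contained in $Z$ evaluates to $L^{3/2}/k$, off by a factor of $k$. The paper instead takes $d \sim L^2 S^{-1} k^{-3}$, where $S = |\frak S|$ is the unknown being bounded, and runs a proof by contradiction: assume $S \ge A L^{3/2} k^{-2} + Lk^{-1}$ and deduce that a single plane must contain $\gtrsim S k^3 L^{-1}$ lines, i.e.\ $B \gtrsim S k^3 L^{-1}$. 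With that $S$-dependent degree, the leakage term $Ld/k \sim L^3 S^{-1} k^{-4}$ becomes $\lesssim A^{-2} S k L^{-1} \cdot L / k = A^{-2} S$---a tiny \emph{fraction} of $S$ once $A$ is large (this is exactly Lemma~\ref{degest1} and Lemma~\ref{pointsinsurface'} in the paper). Your proposed fix---induct on $L$ after discarding $\frak L_Z$---does not close: if $\frak L_Z$ happens to be empty (which the hypotheses permit), there is nothing to discard and $\frak S_Z^a = \frak S_Z$ with the same bad $L^{3/2}/k$ bound; and even when $\frak L_Z$ is nonempty but small, the inductive hypothesis on $L' = L - |\frak L_Z|$ returns essentially the same right-hand side with a constant degraded by $(k/(k/2))^2 = 4$, so the induction blows up unless $\frak L_Z$ is a definite fraction of $\frak L$, which you have not established. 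The paper's $S$-dependent degree sidesteps the whole issue.

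A second, related mismatch: for the algebraic case you reach for the flecnode/ruled-surface machinery of Section~3, but that is the $k=2$ tool and it is not what the paper uses here. For $k \ge 3$ the paper observes that a regular point of $Z$ lying on three or more lines \emph{in}~$Z$ must be a flat point (Lemma~\ref{critflat}), and then bounds critical lines by $d^2$ and flat lines of the plane-free part of $Z$ by $3d^2$ (the lemma imported from \cite{EKS}); with $d \lesssim L^2 S^{-1} k^{-3}$ and the contradiction hypothesis in force, $d^2 \ll L$, so almost all of the $\gtrsim L$ lines you have forced into $Z$ must be flat lines lying in planar components, and pigeonholing over the $\le d$ planes gives the $B \gtrsim S k^3 L^{-1}$ conclusion. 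Your route through Corollary~\ref{flecbound} and Lemma~\ref{ruled} would additionally require $p$ to be plane-free and regulus-free, and the regulus pruning you sketch (``a generic point of a regulus lies on only two lines'') is not by itself enough to control $3$-rich points that sit on a regulus with extra lines of $\frak L$ passing through them from outside. Finally, note that the paper does use two inductions, but of a different kind: Proposition~\ref{incidence-} inducts on $|\frak L|$ to remove a \emph{line-uniformity} hypothesis (that a positive fraction of lines carry roughly the average number of rich points), and the final reduction to Theorem~\ref{incidence} is a dyadic decomposition of $\frak S$ by multiplicity. Both of those steps genuinely close; the induction you proposed is replacing the missing Lemma~\ref{pointsinsurface'} and does not.
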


Theorem \ref{incidence} implies Theorem \ref{theorem2} by setting $L = N^2$ and $B = N$.

This theorem is sharp up to constant factors in a number of cases.  These examples help to give a
sense of the right-hand side.

{\bf Example 1.} Choose $L/k$ points.  Let $\frak L$ consist of $k$ lines through each point.
The set $\frak L$ has a k-fold incidence at each of the $L/k$ points.  (We can also arrange that no three lines lie in a plane.)

{\bf Example 2.} Choose $L/B$ planes.  Put $B$ lines in each of the planes.  The $B$ lines in each
plane can be arranged to create $B^2 k^{-3}$ k-fold incidences.  (See the examples in \cite{SzT}.)  This set of lines has a total of $L B k^{-3}$
k-fold incidences.

{\bf Example 3.} Let $G_0$ denote the integer lattice $\{ (a, b, 0) \}$ with $1 \le a, b \le L^{1/4}$.  Let
$G_1$ denote the integer lattice $\{ (a, b, 1) \}$ with $1 \le a,b \le L^{1/4}$.  Let $\frak L$ denote all the lines
from a point of $G_0$ to a point of $G_1$.  The horizontal planes $z=0$ and $z=1$ do not contain any lines of
$\frak L$.  Any other plane contains at most $L^{1/4}$ points of each $G_i$, and so
at most $L^{1/2}$ lines of $\frak L$.  We will prove in the appendix that there are $\sim L^{3/2} k^{-2}$
points that lie in $\ge k$ lines of $\frak L$ for each $k$ in the range $2 \le k \le L^{1/2}/400$.

For context, we should compare Theorem \ref{incidence} to the Szemer\'edi-Trotter theorem, which holds in all
dimensions as we now recall.

\begin{theorem} \label{sthigh} If $\frak L$ is a set of $L$ lines in ${\bf R^n}$, and $\frak S$ denotes the set of points lying
in at least $k$ lines of $\frak L$, then

$$ | \frak S | \lesssim L^2 k^{-3} + L k^{-1}. $$
\end{theorem}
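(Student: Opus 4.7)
The plan is to reduce Theorem \ref{sthigh} to the planar Szemer\'edi--Trotter theorem (Theorem \ref{Szt}) by means of a generic linear projection. If $\pi: {\bf R^n} \to {\bf R^2}$ is chosen appropriately, the image $\pi(\frak L)$ will consist of $L$ distinct lines in the plane, the image $\pi(\frak S)$ will consist of $|\frak S|$ distinct points, and every point of $\pi(\frak S)$ will lie on at least $k$ of these projected lines. Theorem \ref{Szt} applied to the resulting planar configuration then yields the stated bound immediately.

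To carry this out, I would first verify that a generic linear map $\pi: {\bf R^n} \to {\bf R^2}$ satisfies the three conditions: (a) $\pi$ is non-constant on each line $\ell \in \frak L$, so that $\pi(\ell)$ is itself a line; (b) for any two distinct lines $\ell \neq \ell'$ in $\frak L$, the projected lines $\pi(\ell)$ and $\pi(\ell')$ are distinct; and (c) for any two distinct points $p \neq p'$ of $\frak S$, the images $\pi(p)$ and $\pi(p')$ are distinct. The space of linear maps ${\bf R^n} \to {\bf R^2}$ is a $2n$-dimensional real vector space, and each of these finitely many ``bad'' conditions (one for each line, each pair of lines, and each pair of points) cuts out a proper algebraic subvariety of that parameter space; condition (a) is the vanishing of $\pi$ on a fixed direction vector, condition (c) is the vanishing of $\pi$ on a fixed nonzero vector $p - p'$, and condition (b) unpacks into the statement that the directions of $\ell$ and $\ell'$ and the translation between any chosen points on them all map into the same line in ${\bf R^2}$, which is again a positive-codimension condition regardless of whether $\ell,\ell'$ are parallel, concurrent, or skew. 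Since $\frak L$ and $\frak S$ are finite, the union of these exceptional subvarieties is still proper, and we may select $\pi$ in its complement.

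With such a $\pi$ fixed, any point $p \in \frak S$ lying on $k$ lines $\ell_1,\dots,\ell_k$ of $\frak L$ maps to a point $\pi(p)$ lying on the $k$ distinct projected lines $\pi(\ell_1),\dots,\pi(\ell_k)$, and distinct points of $\frak S$ map to distinct points. Thus $\pi(\frak S)$ is a set of exactly $|\frak S|$ points, each incident to at least $k$ of the $L$ lines of $\pi(\frak L)$, and Theorem \ref{Szt} gives
$$|\frak S| = |\pi(\frak S)| \le C \bigl(L^2 k^{-3} + L k^{-1}\bigr),$$
as required. I do not expect any serious obstacle in this plan: the only routine work is the codimension count in the genericity step, and the reason nothing goes wrong conceptually is that linear projection can only create new coincidences among lines and points, never destroy them, so no incidence information from the higher-dimensional configuration is lost in passing to ${\bf R^2}$.
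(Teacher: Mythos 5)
Your proof is correct and is exactly the argument the paper gives (in abbreviated form) immediately after the theorem statement: take a generic linear projection to ${\bf R}^2$ so that the $L$ lines and the points of $\frak S$ remain distinct, note that incidences are preserved, and invoke the planar Szemer\'edi--Trotter theorem. Your codimension count making ``generic'' precise is a reasonable way to fill in the routine details the paper leaves implicit.
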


\noindent The higher-dimensional case follows easily from the two-dimensional case by taking a generic projection from
${\bf R^n}$ to ${\bf R^2}$.  The set of lines $\frak L$ will project to $L$ distinct lines in ${\bf R^2}$, 
and the points of $\frak S$ project to distinct points in ${\bf R^2}$.

Theorem \ref{incidence} is a refinement of Theorem \ref{sthigh}.  When $B = L$, Theorem \ref{incidence}
is Theorem \ref{sthigh}.  Theorem \ref{incidence} tells us how much we can improve the Szemer\'edi-Trotter
theorem if we know in addition that not too many lines lie in a plane.

We will use Theorem \ref{sthigh} in our proof.  Recently, in \cite{KMS}, Kaplan, Matou\u{s}ek, and Sharir gave
a new proof of the Szemer\'edi-Trotter theorem using polynomial cell decompositions.

Now we turn to the proof of Theorem \ref{incidence}.
An important special case is the uniform case where each point
has $\sim k$ lines through it and each line contains about the same number of points.
We will first prove the theorem under some uniformity hypotheses.

\begin{proposition} \label{uniform} Let $k \ge 3$.  Let $\frak L$ be a set of $L$ lines in ${\bf R^3}$ with at most
$B$ lines in any plane.  Let $\frak S$ be a set of $S$ points in ${\bf R^3}$ so that each point
intersects between $k$ and $2k$ lines of $\frak L$.

Also, we assume that there are $\ge \frac{1}{100} L$ lines in $\frak L$ which each contain
$\ge \frac{1}{100} S k L^{-1}$ points of $\frak S$.

Then $S \le C [ L^{3/2} k^{-2} + L B k^{-3} + L k^{-1} ]$.

\end{proposition}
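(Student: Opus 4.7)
The plan is to prove this proposition via a polynomial partitioning argument as foreshadowed in the introduction: the cellular method handles points in the interiors of cells, while the polynomial method handles points lying on the partitioning surface.

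First I would apply Theorem \ref{polycell} to $\frak S$, choosing $J$ so that the resulting polynomial $p$ has degree $d \sim L^{1/2}/k$. This produces $\sim d^3$ open cells $O_i$ partitioning ${\bf R}^3 \setminus Z$ (where $Z = \{p=0\}$), with each cell containing at most $S/d^3$ points of $\frak S$. Partition $\frak S = \frak S_c \sqcup \frak S_Z$ into points in the open cells and points on $Z$; one of the two sets has size $\geq S/2$, and the argument splits accordingly.

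In the cellular case $|\frak S_c| \geq S/2$, any line of $\frak L$ not contained in $Z$ meets $Z$ in at most $d$ points and so crosses at most $d+1$ cells, while lines inside $Z$ contribute no incidences with $\frak S_c$. Writing $L_i$ for the number of lines crossing $O_i$, we have $\sum_i L_i \lesssim Ld$. Applying the Szemer\'edi--Trotter bound in ${\bf R}^3$ (Theorem \ref{sthigh}) inside each cell gives $|\frak S \cap O_i| \lesssim L_i^2 k^{-3} + L_i k^{-1}$. Summing these and taming cells where $L_i$ concentrates by using the cap $|\frak S \cap O_i| \le S/d^3$, and then optimizing over $d \sim L^{1/2}/k$, yields $|\frak S_c| \lesssim L^{3/2} k^{-2} + L k^{-1}$, which accounts for the first and third terms of the target bound.

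In the surface case $|\frak S_Z| \geq S/2$, the richness hypothesis does the work. A rich line (with $\geq Sk/(100L)$ points of $\frak S$) not contained in $Z$ meets $Z$ in at most $d$ points, so can contribute only $O(d)$ of its $\frak S$-points to $\frak S_Z$. Since each point of $\frak S$ has between $k$ and $2k$ lines through it and there are at least $L/100$ rich lines, counting incidences between rich lines and $\frak S_Z$ forces a positive fraction of the rich lines to lie in $Z$, producing $\gtrsim L$ lines contained in $Z$. One then runs the critical/flat polynomial method of \cite{GK} and \cite{EKS} on $Z$: at each point of $\frak S_Z$ where $k \geq 3$ of these lines meet, $Z$ is either critical or flat, and such points are bounded by B\'ezout applied to $p$ against $\nabla p$ and the algebraic second fundamental form. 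The planar components of $Z$ (at most $d$ of them, each containing $\leq B$ lines by hypothesis) are handled by two-dimensional Szemer\'edi--Trotter, contributing $\lesssim d B^2 k^{-3} \lesssim LB k^{-3}$, the middle term.

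The main obstacle I expect lies in Case 2: converting the richness hypothesis quantitatively into a lower bound on the number of lines contained in $Z$, and then carefully applying the polynomial method on $Z$ while respecting the $\leq B$-lines-per-plane constraint, with all of the bookkeeping tuned so that the single choice $d \sim L^{1/2}/k$ which balances Case 1 also delivers the $LBk^{-3}$ term in Case 2.
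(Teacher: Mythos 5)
Your overall architecture — polynomial partitioning, a cellular estimate inside the open cells, and the critical/flat polynomial method on the partitioning surface $Z$ — is the right one and matches the paper's. But there is a genuine gap in the way you choose the degree and in how the $LBk^{-3}$ term is supposed to emerge, and this is where the proof actually lives.

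You fix the degree at $d \sim L^{1/2}/k$. The paper instead sets $d \sim \theta L^2 S^{-1} k^{-3}$ for a judiciously chosen universal constant $\theta$, after first assuming for contradiction that $S \ge A L^{3/2}k^{-2} + Lk^{-1}$ with $A$ large. These two choices agree precisely at the threshold $S \sim L^{3/2}k^{-2}$, but once you are in the contradiction regime $S \gg L^{3/2}k^{-2}$ the paper's degree is \emph{smaller} by exactly the factor $L^{3/2}k^{-2}/S$, and this scaling is what produces the middle term. In the surface case the conclusion of the flecnode/critical/flat argument is that $\gtrsim L$ of the lines of $\frak L$ lie in planar components of $Z$. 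There are at most $d$ such planes, so some plane contains $\gtrsim L/d$ lines, hence $B \gtrsim L/d$. With $d \sim L^2 S^{-1}k^{-3}$ this reads $B \gtrsim Sk^3L^{-1}$, i.e. $S \lesssim LBk^{-3}$, which is the bound you want. With your fixed $d \sim L^{1/2}/k$ the same reasoning only gives $B \gtrsim L^{1/2}k$, a statement that involves no $S$ at all and therefore cannot close the argument. Your alternative attempt to get the middle term — applying planar Szemer\'edi--Trotter in each of the $\le d$ planes and claiming $dB^2k^{-3} \lesssim LBk^{-3}$ — also fails: that inequality requires $B \lesssim L/d$, which is false in general (take $B$ close to $L$). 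Relatedly, your fixed $d$ gives $d^2 \sim L/k^2$, so for small $k$ (say $k=3$) the B\'ezout bounds on critical and flat lines, $O(d^2)$, are of order $L$ rather than a tiny fraction of $L$; the paper needs $d \le 10^{-4}L^{1/2}$, which again is only available because $d$ shrinks with $S$.

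A smaller but real issue is the 50--50 dichotomy between ``cellular'' and ``surface'' cases. Knowing merely $|\frak S_Z| \ge S/2$ is too weak: a rich line not contained in $Z$ could then still place up to half its $\frak S$-points in $\frak S_Z$, and the incidence count you sketch does not force a positive fraction of the rich lines into $Z$. The paper instead proves (in its Lemma on points lying in the surface) the much stronger statement that, under the contradiction hypothesis, \emph{all but} $10^{-8}S$ of the points lie in $Z$; that quantitative gap is what makes the richness bookkeeping work. Your proposal can be repaired along the paper's lines by (i) assuming $S \ge A(L^{3/2}k^{-2}+Lk^{-1})$ at the outset, (ii) taking $d \sim \theta L^2S^{-1}k^{-3}$ with $\theta$ a universal constant fixed by the cellular estimate, (iii) deducing $|\frak S \setminus \frak S_Z| \le 10^{-8}S$ rather than merely $\le S/2$, and (iv) getting the $LBk^{-3}$ term not from planar Szemer\'edi--Trotter but from the pigeonhole bound $B \gtrsim L/d \gtrsim Sk^3L^{-1}$.
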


The second paragraph of Proposition \ref{uniform} is a uniformity assumption about the lines.  Note that there are $\sim S k$ total incidences
between lines of $\frak L$ and points of $\frak S$.  Therefore, an average line of $\frak L$ contains $\sim S k L^{-1}$
points of $\frak S$.  We assume here that there are many lines that are about average.  Proposition \ref{uniform} is the main
part of the proof of Theorem \ref{incidence}.  The general case reduces to this special case by easy inductive arguments.

\begin{proof} We begin by outlining our strategy.   We suppose that

\begin{equation} \label{bigs} S \ge A L^{3/2} k^{-2} + L k^{-1}. \end{equation}  

In this equation, $A$ represents a large constant that we will choose below.  Assuming
\ref{bigs}, we need to show that many lines of $\frak L$ lie in a plane.  In particular,
we will find a plane that contains $\gtrsim S L^{-1} k^3$ lines of $\frak L$.  This means
that $B \gtrsim S L^{-1} k^3$, and hence $S \lesssim B L k^{-3}$, and we will be done.

Let us outline how we find the plane.  First we prove that a definite fraction of the lines of $\frak L$
lie in an algebraic surface $Z$ of degree $\lesssim L^2 S^{-1} k^{-3}$.  Second we prove that
this variety $Z$ contains some planes, and that a definite fraction of the lines of $\frak L$
lie in the planes.  Since there are at most $d$ planes, one plane must contain $\gtrsim
L/d$ lines.  Because $d \lesssim L^2 S^{-1} k^{-3}$, this plane contains $\gtrsim S L^{-1} k^{3}$
lines, which is what we wanted to prove. 

Our bound for the degree $d$ is sharp up to a constant factor because of Example 2 above.  In this example, the lines $\frak L$
lie in $\sim L^2 S^{-1} k^{-3}$ planes.  Since the planes can be taken in general position, the lines $\frak L$
do not lie in an algebraic surface of lower degree.

(Our bound for the degree $d$ is the new ingredient in this section.  We will find the algebraic surface $Z$ by using
the polynomial cell decomposition of Theorem \ref{polycell}.  We initially tried to find $Z$ by
using the purely algebraic degree reduction argument from \cite{GK}, as in Section 3.  With this method, we proved that
a definite fraction of the lines of $Z$ lie in an algebraic surface of degree $L^2 S^{-1} k^{-2}$.
But this degree is too large to make our argument work.)

Now we begin the detailed proof of Proposition \ref{uniform}.

First we prove that almost all points of $\frak S$ lie in a surface $Z$ with controlled degree.  This lemma
is the most important step in the proof of Theorem \ref{incidence}.

\begin{lemma} \label{pointsinsurface} If the constant $A$ in inequality \ref{bigs} is sufficiently large, then
there is an algebraic surface $Z$ of degree $\lesssim L^2 S^{-1} k^{-3}$ that contains at least $(1 - 10^{-8})S$
points of $\frak S$.
\end{lemma}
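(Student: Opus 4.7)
The plan is to apply the polynomial cell decomposition of Theorem~\ref{polycell} to the point set $\frak S$ and then to control the points that land in the open cells via the higher-dimensional Szemer\'edi--Trotter theorem (Theorem~\ref{sthigh}) applied inside each cell. Fix a large universal constant $A'$ (to be chosen) and set $d := A' L^2 S^{-1} k^{-3}$, then choose $J$ so that $2^{J/3} \sim d$; Theorem~\ref{polycell} then produces a polynomial $p$ of degree $\lesssim d$ whose zero set $Z$ is the common boundary of $\sim d^3$ open cells $\{O_i\}$, each containing at most $S/d^3$ points of $\frak S$. The goal is to show that the total number of points of $\frak S$ lying in the open cells is at most $10^{-8}S$, which forces $|\frak S \cap Z| \ge (1-10^{-8})S$.

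For each cell $O_i$ let $S_i := |\frak S \cap O_i|$ and let $L_i$ denote the number of lines of $\frak L$ meeting $O_i$. Since every line not contained in $Z$ meets the degree-$d$ surface $Z$ in at most $d$ points, such a line enters at most $d+1$ cells, so $\sum_i L_i \le L(d+1) \lesssim Ld$. Two bounds are now available on $S_i$: the cell-size bound $S_i \le S/d^3$ coming from Theorem~\ref{polycell}, and the Szemer\'edi--Trotter bound $S_i \le C(L_i^2 k^{-3} + L_i k^{-1})$ applied to the lines of $\frak L$ meeting $O_i$ together with the points of $\frak S \cap O_i$ (each such point has at least $k$ lines of $\frak L$ through it, all of which must enter $O_i$).

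To combine the two bounds I would use the elementary inequality $\min(a, b+c) \le \sqrt{ab} + \sqrt{ac}$, giving
\[
S_i \;\lesssim\; \sqrt{(S/d^3)\, L_i^2 k^{-3}} \;+\; \sqrt{(S/d^3)\, L_i k^{-1}}.
\]
Summing and using $\sum_i L_i \lesssim Ld$ together with the Cauchy--Schwarz estimate $\sum_i \sqrt{L_i} \le (d^3 \sum_i L_i)^{1/2} \lesssim d^2 L^{1/2}$ yields
\[
\sum_i S_i \;\lesssim\; S^{1/2} L\, d^{-1/2} k^{-3/2} \;+\; S^{1/2} L^{1/2} d^{1/2} k^{-1/2}.
\]
Substituting $d = A' L^2 S^{-1} k^{-3}$ turns the right side into $\lesssim A'^{-1/2} S + A'^{1/2} L^{3/2} k^{-2}$. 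Picking $A'$ sufficiently large makes the first term at most $10^{-8}S/2$; the second term is at most $10^{-8}S/2$ exactly when $S \ge A L^{3/2} k^{-2}$ for $A$ chosen sufficiently large in terms of $A'$, which is precisely the hypothesis (\ref{bigs}).

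The main obstacle is the trade-off between the two bounds on $S_i$: Szemer\'edi--Trotter is strong when $L_i$ is small, while the cell-size bound is strong when $L_i$ is large. A crude use of either bound alone, or of Cauchy--Schwarz on $\sum L_i^2$ with the trivial estimate $L_i \le L$, produces a suboptimal degree. The $\sqrt{ab}$-combination above (equivalently, an explicit case split at the threshold $L_i \sim (Sk^3/d^3)^{1/2}$) is what yields precisely the target degree $d \sim L^2/(Sk^3)$, and $S \gtrsim L^{3/2} k^{-2}$ emerges as exactly the threshold required to absorb the $Lk^{-1}$ term of Szemer\'edi--Trotter.
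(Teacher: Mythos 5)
Your argument is correct and yields the same degree bound $d\sim L^2 S^{-1}k^{-3}$, but it takes a genuinely different route from the paper's. The paper argues by contradiction with a pigeonhole on the extremal cell: if more than $10^{-8}S$ points live in the open cells, there are $\gtrsim d^3$ ``full'' cells each holding $\gtrsim Sd^{-3}$ points; picking the full cell with the \emph{fewest} lines gives $L_{\text{cell}}\lesssim Ld^{-2}$, and a single application of Szemer\'edi--Trotter to that one cell then forces $S\lesssim \theta^{1/2}L^{3/2}k^{-2}$, contradicting (\ref{bigs}). You instead bound $\sum_i S_i$ directly over all cells, without contradiction and without isolating an extremal cell: the elementary inequality $\min(a,b+c)\le\sqrt{ab}+\sqrt{ac}$ interpolates between the cell-size bound and the per-cell Szemer\'edi--Trotter bound, and Cauchy--Schwarz absorbs the $\sum_i\sqrt{L_i}$ term. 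Both methods are in the standard cutting-lemma spirit and both show the threshold $S\gtrsim L^{3/2}k^{-2}$ is exactly what is needed; your version is a bit more ``global'' and averaging-flavored, at the cost of the extra min-inequality trick, while the paper's is the cleaner textbook pigeonhole. One small omission: you should verify $d\ge 1$ before invoking Theorem~\ref{polycell}. The paper does this by combining Theorem~\ref{sthigh} with (\ref{bigs}): since $S\ge Lk^{-1}$, Szemer\'edi--Trotter gives $S\lesssim L^2k^{-3}$, hence $L^2S^{-1}k^{-3}\gtrsim 1$ and $d\ge 1$ for an appropriate choice of constant. You should add this line; it is trivial but necessary for the cell decomposition to be nonvacuous.
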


\begin{proof} We let $\theta$ denote a large constant which we will choose later, and we let $d$
be the greatest integer less than $\theta L^2 S^{-1} k^{-3}$.  This
$d$ will be the degree of our surface $Z$.  First we check that $d \ge 1$.  By the Szemer\'edi-Trotter
theorem, $S \lesssim L^2 k^{-3} + L k^{-1}$.  But by inequality \ref{bigs}, $S \ge L k^{-1}$.  Therefore,
$S \lesssim L^2 k^{-3}$.  Hence we can choose $\theta$ so that $d \ge 1$.

Now we apply Theorem \ref{polycell} to construct a degree $d$ surface $Z$ such that ${\bf R^3} \setminus
Z$ is a union of $\sim d^3$ open cells $O_i$, each containing $\lesssim S d^{-3}$ points of $\frak S$.

Let us suppose that $Z$ contains $< (1 - 10^{-8}) S$ points of $\frak S$.  So the open cells $O_i$ all together contain
$\ge 10^{-8} S$ points of $\frak S$.  Since each cell contains $\lesssim S d^{-3}$ points of $\frak S$, there
must be $\gtrsim d^3$ cells that each contain $\gtrsim S d^{-3}$ points of $\frak S$.  We call these full cells.

We now prove an upper bound for $S$ using the cellular method from \cite{CEGSW}.

We let $\frak L(O_i)$ denote the subset of lines of $\frak L$ which intersect $O_i$.  We let $L_{cell}$ be the minimum of
$| \frak L (O_i)|$ among all the full cells $O_i$.  We apply the Szemer\'edi-Trotter inequality to the full cell with the fewest lines.
Since this full cell still contains $\gtrsim S d^{-3}$ points, we get the following inequality.

$$ S d^{-3} \lesssim L_{cell}^2 k^{-3} + L_{cell} k^{-1}. $$

Next we estimate $L_{cell}$ in terms of the degree of $Z$.  A line either lies in $Z$ or else it intersects $Z$ at most $d$ times.
Every time a line moves from one open cell $O_i$ to another, it needs to pass through $Z$.  
So each line of $\frak L$ intersects at most $d+1$ cells $O_i$.  So there are $\le L (d+1)$ pairs $(l, O_i)$ where $l \in \frak L(O_i)$.
But there are $\sim d^3$ full cells $O_i$.  Hence $L_{cell} \lesssim L d^{-2}$.  Plugging in this estimate for $L_{cell}$ we get
the following inequality.

$$ S d^{-3} \lesssim L^2 d^{-4} k^{-3} + L d^{-2} k^{-1} . $$

Recalling that $d \sim \theta L^2 S^{-1} k^{-3}$ and rearranging, we get the following inequality.

$$ S \le C( \theta^{-1} S + \theta L^3 S^{-1} k^{-4}). $$

Note that the constant $C$ does not depend on $\theta$.  (We could work it out explicitly using an explicit constant in Theorem
\ref{polycell} and in the Szemer\'edi-Trotter
theorem.)  At this point, we choose $\theta$ sufficiently large so that $C \theta^{-1} < 1/2$.  We can then move the term $C \theta^{-1} S$
to the left-hand side, and rearrange to get the inequality

$$ S \lesssim \theta^{1/2} L^{3/2} k^{-2}. $$

If the constant $A$ is sufficiently large, this inequality contradicts \ref{bigs}.  We conclude that there are less than $10^{-8} S$
points of $\frak S$ outside of $Z$.  

Finally, the degree of $Z$ is $d \le \theta L^2 S^{-1} k^{-3}$.  The constant $\theta$ is a particular number that we chose
above.  In particular $\theta$ does not depend on $A$.   And so $d \lesssim L^2 S^{-1} k^{-3}$ as desired.  \end{proof}

We let $\frak S_Z$ denote the points of $\frak S$ that lie in $Z$.  By Lemma \ref{pointsinsurface}, $ |\frak S \setminus
\frak S_Z| \le 10^{-8} S$.  Our next goal is to prove that many lines of $\frak L$ lie in the surface $Z$.  This result depends on a quick calculation about
the degree $d$.  Recall that an average line of $\frak L$ contains $S k L^{-1}$ points of $\frak S$.  We prove that
the degree $d$ is much smaller than $S k L^{-1}$.

\begin{lemma} \label{degest1} If the constant $A$ is sufficiently large, then

$$ d < 10^{-8} S k L^{-1}. $$

\end{lemma}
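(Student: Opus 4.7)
The plan is to prove Lemma \ref{degest1} by a direct algebraic comparison between the upper bound for $d$ coming from Lemma \ref{pointsinsurface} and the lower bound for $S$ coming from the standing hypothesis (\ref{bigs}). No new geometric input is needed; the whole content is a choice of constants.

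First I would recall that Lemma \ref{pointsinsurface} produced a surface of degree
$$ d \le \theta L^2 S^{-1} k^{-3}, $$
where $\theta$ is a particular constant fixed in that proof (independent of $A$). The desired inequality $d < 10^{-8} S k L^{-1}$ is therefore equivalent to
$$ \theta L^3 k^{-4} < 10^{-8} S^2, $$
that is, to a lower bound of the form $S > C_0 L^{3/2} k^{-2}$ for a constant $C_0$ depending only on $\theta$ (explicitly, any $C_0 \ge 10^4 \theta^{1/2}$ suffices).

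Next I would invoke the standing hypothesis (\ref{bigs}), namely $S \ge A L^{3/2} k^{-2} + L k^{-1}$, which in particular gives $S \ge A L^{3/2} k^{-2}$. Taking $A$ larger than the constant $C_0$ above — note that $A$ is a parameter we are still free to enlarge, while $\theta$ has already been pinned down in Lemma \ref{pointsinsurface} — immediately yields the required estimate. Since we already chose $A$ large enough in Lemma \ref{pointsinsurface}, we simply enlarge $A$ further here if needed, which is harmless because enlarging $A$ only strengthens (\ref{bigs}).

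There is no real obstacle: the only thing to verify is that the constants in the two lemmas can be stacked in the right order, i.e., that $\theta$ is determined without reference to $A$ so that $A$ may be taken larger than any function of $\theta$. This is exactly how the previous lemma was set up, so the argument is purely a bookkeeping of constants.
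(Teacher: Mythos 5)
Your proof is correct and follows essentially the same route as the paper's: both combine the degree bound $d \lesssim L^2 S^{-1} k^{-3}$ from Lemma~\ref{pointsinsurface} with the standing hypothesis $S \ge A L^{3/2} k^{-2}$ from~(\ref{bigs}) and observe that $\theta$ was fixed independently of $A$, so $A$ may be taken large relative to $\theta$. The paper arranges the algebra slightly differently (squaring $1 \le A^{-1} S L^{-3/2} k^2$ and multiplying by $d$), but the content is identical to your explicit bound $A \ge 10^4\theta^{1/2}$.
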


\begin{proof} Inequality \ref{bigs} can be rewritten as 

$$ 1 \le A^{-1} S L^{-3/2} k^2. $$

Squaring this, we see that

$$ d \le d A^{-2} S^2 L^{-3} k^4 \lesssim A^{-2} S k L^{-1}. $$

Now choosing $A$ sufficiently large finishes the proof.  \end{proof}

As an immediate corollary, we get the following lemma.

\begin{lemma} \label{linesinsurface} If $l$ is a line of $\frak L$ that contains at least $10^{-8} S k L^{-1}$ 
points of $\frak S_Z$, then $l$ is contained in $Z$.
\end{lemma}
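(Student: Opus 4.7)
The plan is to apply the standard fact that a line in $\mathbb{R}^3$ either lies entirely inside an algebraic surface or meets it in at most $d$ points, where $d$ is the degree of the defining polynomial. Concretely, if $p$ is the polynomial of degree $d$ whose zero set is $Z$, and $l$ is parametrized as $t \mapsto a + tv$, then $p(a+tv)$ is a univariate polynomial in $t$ of degree at most $d$. Either this restriction is identically zero, in which case $l \subset Z$, or it has at most $d$ roots, in which case $|l \cap Z| \le d$. This dichotomy is exactly the Bezout-type observation already invoked in the proof of Lemma \ref{pointsinsurface} to bound how many cells a line can cross, so no new machinery is needed.

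With that principle in hand, the remaining step is a one-line comparison of parameters. By hypothesis $l$ contains at least $10^{-8} S k L^{-1}$ points of $\frak S_Z$, and since $\frak S_Z \subset Z$ this gives $|l \cap Z| \ge 10^{-8} S k L^{-1}$. On the other hand, Lemma \ref{degest1} furnishes the bound $d < 10^{-8} S k L^{-1}$. Combining these two inequalities yields $|l \cap Z| > d$, which rules out the possibility that the restriction $p(a+tv)$ is a nonzero polynomial of degree $\le d$. Therefore $p$ vanishes identically along $l$, i.e.\ $l \subset Z$, which is exactly the desired conclusion.

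There is no real obstacle here: the entire content of the lemma is already packaged in Lemma \ref{degest1}, which was set up precisely so that the degree of $Z$ sits strictly below the threshold $10^{-8} S k L^{-1}$. The phrase ``immediate corollary'' in the text is accurate, and the proof I would write is simply the two observations above combined. The substantive work was done earlier in choosing the constant $A$ in inequality \ref{bigs} large enough to make the degree bound $d \lesssim L^2 S^{-1} k^{-3}$ beat the average-incidence threshold $S k L^{-1}$ by the needed constant factor $10^{-8}$.
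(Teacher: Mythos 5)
Your proof is correct and is exactly the argument the paper gives: use Lemma \ref{degest1} to see that the degree $d$ of $Z$ is strictly less than $10^{-8}SkL^{-1}$, so a line meeting $Z$ in at least that many points must lie in $Z$. (Incidentally, the paper's printed proof has a sign typo, writing ``$d > 10^{-8}SkL^{-1}$'' where it means ``$d < 10^{-8}SkL^{-1}$''; you have stated the inequality in the correct direction.)
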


\begin{proof} The line $l$ contains at least $10^{-8} S k L^{-1}$ points of $Z$.  Since $d > 10^{-8} S k L^{-1}$, the line $l$ must lie in the surface $Z$.
\end{proof}

Let $\frak L_Z $ denote the set of lines in $\frak L$ that are contained in $Z$.

\begin{lemma} The set $\frak L_Z$ contains at least $(1/200) L$ lines.
\end{lemma}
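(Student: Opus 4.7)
The plan is to exploit the uniformity hypothesis together with Lemmas \ref{pointsinsurface} and \ref{linesinsurface}. Call a line \emph{rich} if it contains at least $\tfrac{1}{100}SkL^{-1}$ points of $\frak S$; by hypothesis there are at least $\tfrac{1}{100}L$ rich lines. Call a rich line \emph{bad} if it is not contained in $Z$, and \emph{good} otherwise. The goal is to show that at most a tiny fraction of the rich lines are bad.

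If $l$ is a bad rich line, then by the contrapositive of Lemma \ref{linesinsurface}, $l$ contains fewer than $10^{-8}SkL^{-1}$ points of $\frak S_Z$. Hence $l$ must contain at least
\[
\tfrac{1}{100}SkL^{-1}-10^{-8}SkL^{-1}\ \ge\ \tfrac{1}{200}SkL^{-1}
\]
points of $\frak S\setminus\frak S_Z$. The next step is to count incidences with $\frak S\setminus\frak S_Z$: by Lemma \ref{pointsinsurface} there are at most $10^{-8}S$ such points, and by the uniformity hypothesis each of them is incident to at most $2k$ lines of $\frak L$. Therefore the total number of incidences between $\frak L$ and $\frak S\setminus\frak S_Z$ is at most $2\cdot 10^{-8}Sk$.

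Dividing, the number of bad rich lines is at most
\[
\frac{2\cdot 10^{-8}Sk}{\tfrac{1}{200}SkL^{-1}}\ =\ 4\cdot 10^{-6}L.
\]
Subtracting from the total number of rich lines, the number of good rich lines is at least $\tfrac{1}{100}L-4\cdot 10^{-6}L\ge\tfrac{1}{200}L$, and every good rich line lies in $Z$ by Lemma \ref{linesinsurface}. This gives $|\frak L_Z|\ge\tfrac{1}{200}L$.

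I do not expect any real obstacle here: the argument is just a careful pigeonhole balancing the quantitative inputs already proved, namely the lower bound on rich lines in the uniformity hypothesis, the upper bound $10^{-8}S$ on points outside $Z$ from Lemma \ref{pointsinsurface}, and the threshold $10^{-8}SkL^{-1}$ from Lemma \ref{degest1}/Lemma \ref{linesinsurface}. The only mild subtlety is to make sure the constant $10^{-8}$ appearing in the exceptional set is chosen small enough (relative to the $1/100$ in the uniformity hypothesis) so that the subtraction $\tfrac{1}{100}-10^{-8}$ still leaves a definite fraction, which it does by a comfortable margin.
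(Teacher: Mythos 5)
Your argument is correct and is essentially identical to the paper's: you show that any rich line not in $Z$ must contribute at least $\tfrac{1}{200}SkL^{-1}$ incidences with $\frak S\setminus\frak S_Z$, bound the total such incidences by $2\cdot 10^{-8}Sk$ using Lemma \ref{pointsinsurface} and the $\le 2k$ upper bound, and conclude that at most $4\cdot 10^{-6}L$ rich lines fail to lie in $Z$. The only cosmetic slip is the final sentence: a ``good'' rich line lies in $Z$ by your definition of good, not by Lemma \ref{linesinsurface}; this does not affect the proof.
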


\begin{proof}  We assumed
that there are $\ge (1/100) L$ lines of $\frak L$ which each contain $\ge (1/100) S k L^{-1}$
points of $\frak S$.  Let $\frak L_0 \subset \frak L$ be the set of these lines.  We claim that
most of these lines lie in $\frak L_Z$.  Suppose that a line $l$ lies in $\frak L_0 \setminus \frak L_Z$.
It must contain at least $(1/100) S k L^{-1}$ points of $\frak S$.  But by Lemma \ref{linesinsurface},
it contains $ < 10^{-8} S k L^{-1}$ points of $\frak S_Z$.  Therefore, it must contain at least
$(1/200) S k L^{-1}$ points of $\frak S \setminus \frak S_Z$.  This gives us the following inequality.

$$ (1/200) S k L^{-1} |\frak L_0 \setminus \frak L_Z| \le I(\frak S \setminus \frak S_Z, \frak L_0 \setminus \frak L_Z). $$

Here we write $I$ to abbreviate the number of incidences between a set of points and a set of lines.

On the other hand, each point of $\frak S$ lies in at most $2k$ lines of $\frak L$, giving us an upper bound
on incidences:

$$ I(\frak S \setminus \frak S_Z, \frak L_0 \setminus \frak L_Z) \le 2 k |\frak S \setminus \frak S_Z| \le 2 \cdot 10^{-8} S k. $$

Comparing these two inequalities, we see that $|\frak L_0 \setminus \frak L_Z| \le 4 \cdot 10^{-6} L$, which
implies that $|\frak L_Z| \ge (1/200) L$. \end{proof}

We have now carried out the first step of our outline: we found a surface $Z$ of degree $\lesssim 
L^2 S^{-1} k^{-3}$ which contains a definite fraction of the lines from $\frak L$.

We now turn to the second step of our outline.  We will prove that $Z$ contains some planes, and that these planes
contain many lines of $\frak L$.  This step is closely based on the techniques in \cite{GK} and \cite{EKS}.
The paper \cite{EKS} contains a clear introduction to the techniques.  In particular, Section 2 of \cite{EKS} proves all of
the fundamental lemmas from algebraic geometry that we need.  

Each point of $\frak S_Z$ lies in at least $k$ lines of $\frak L$.  But such a point does not necessarily lie in
any lines of $\frak L_Z$.  Therefore we make the following definition.

We define $\frak S_Z'$ to be the set of points in $\frak S_Z$ that lie in at least three lines
of $\frak L_Z$.  

This subset is important because each point of $\frak S_Z'$ is a special point of the surface
$Z$: either a critical point or a flat point.  Let's recall the definitions of critical points and flat points.

The surface $Z$ is the vanishing set of a polynomial $p$.  The polynomial $p$ can be factored into 
irreducible polynomials $p = p_1 p_2 ...$.  We assume that each irreducible factor of $p$ appears only once.
Now a point $x \in Z$ is called critical if the gradient $\nabla p$ vanishes at $x$.  If $x \in Z$ is not critical, we say
that $x$ is regular.  In a small neighborhood of a regular point $x \in Z$, $Z$ is a smooth submanifold. 
We say that a regular point $x \in Z$ is flat if the second fundamental form of $Z$ vanishes at $x$.

\begin{lemma} \label{critflat} Each point of $\frak S_Z'$ is either a critical point or a flat point of $Z$.
\end{lemma}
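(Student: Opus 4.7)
The plan is to argue by contradiction on the critical alternative. Suppose $x \in \frak S_Z'$ is not a critical point of $Z$, so $\nabla p(x) \neq 0$. Then in a neighborhood of $x$ the surface $Z$ is a smooth $2$-dimensional submanifold of $\mathbb{R}^3$, with tangent plane $T_x Z = \ker (\nabla p(x))$, and it makes sense to speak of the second fundamental form $II_x$ of $Z$ at $x$, which is a quadratic form on $T_x Z$. I want to show that $II_x \equiv 0$, so $x$ is flat.

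By definition, $x$ lies on at least three distinct lines $l_1,l_2,l_3$ of $\frak L_Z$. Since each $l_i$ is contained in $Z$ and passes through the regular point $x$, the direction vector $v_i$ of $l_i$ must lie in $T_x Z$; this is just the statement that $\nabla p(x)\cdot v_i = 0$, which follows from differentiating $p\circ l_i \equiv 0$. Moreover, because $l_i \subset Z$, the function $p$ vanishes to all orders along $l_i$, and in particular $\nabla_{v_i}^{2} p(x) = 0$. Via the standard identification of the second fundamental form with the normalized Hessian of $p$ restricted to $T_x Z$, this says exactly that $II_x(v_i,v_i) = 0$ for $i=1,2,3$.

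Now $II_x$ is a quadratic form on the $2$-dimensional vector space $T_x Z$. The three lines $l_1,l_2,l_3$ pass through the single point $x$ and are distinct, so their direction vectors $v_1,v_2,v_3$ determine three distinct lines through the origin in $T_x Z$. A nonzero real quadratic form in two variables factors (over $\mathbb{C}$) as a product of two linear forms, and hence has at most two distinct isotropic directions through the origin. Since $II_x$ has (at least) three isotropic directions, we conclude $II_x \equiv 0$. Hence $x$ is a flat point of $Z$, completing the dichotomy.

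The only step requiring care is the identification of $\nabla_{v}^2 p(x) = 0$ with vanishing of the second fundamental form at a regular point. This is a routine computation — one differentiates $p(x+tv)=0$ twice along the line and uses $\nabla p(x)\neq 0$ to pass from the Hessian of $p$ to the intrinsic curvature of the level set $\{p=0\}$ — so no substantial obstacle arises. Everything else is linear algebra in the tangent plane.
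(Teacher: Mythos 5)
Your proof is correct and follows essentially the same approach as the paper: reduce to the regular case, observe that the three line directions lie in the tangent plane, note that the second fundamental form vanishes in each of the three directions, and conclude by the linear-algebra fact that a quadratic form on a two-dimensional space with three distinct isotropic directions must vanish. You simply spell out the last step (factoring over $\mathbb{C}$) in slightly more detail than the paper does.
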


\begin{proof} Let $x \in \frak S_Z'$.  By definition, $x$ lies in three lines which all lie in $Z$.  If $x$ is a
critical point of $Z$, we are done.  If $x$
is a regular point of $Z$, then all three lines must lie in the tangent space of $Z$ at $x$.  In particular,
the three lines are coplanar.  Let $v_1, v_2, v_3$ be non-zero tangent vectors of the three lines at $x$.
The second fundamental form of $Z$ vanishes in each of these three directions.  Since the second fundamental
form is a symmetric bilinear form on the 2-dimensional tangent space, it must vanish.  Therefore,
$x$ is a flat point of $Z$. \end{proof}

(See also \cite{EKS}, Proposition 4 and Proposition 6 for a more detailed proof.)

Lemma \ref{critflat} shows that the points of $\frak S_Z'$ are important.  Next we show that almost every point of
$\frak S$ lies in $\frak S_Z'$.

\begin{lemma} \label{pointsinsurface'} The set $\frak S \setminus \frak S_Z'$ contains at most $10^{-7} S$ points.
\end{lemma}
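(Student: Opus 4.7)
The plan is to split $\frak S \setminus \frak S_Z'$ into two parts: those points of $\frak S$ not in $Z$ at all, and those in $\frak S_Z$ but failing to meet three lines of $\frak L_Z$. The first part has size at most $10^{-8}S$ by Lemma \ref{pointsinsurface}, so it suffices to show $|\frak S_Z \setminus \frak S_Z'|$ is comparably small.

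To handle the second part, I would run a double-counting argument. Each point $x \in \frak S_Z \setminus \frak S_Z'$ lies in at most two lines of $\frak L_Z$, yet by hypothesis on $\frak S$ it lies in at least $k$ lines of $\frak L$. Hence $x$ lies in at least $k - 2$ lines of $\frak L \setminus \frak L_Z$, and since $k \ge 3$ we have $k - 2 \ge k/3$. On the other hand, any line $l \in \frak L \setminus \frak L_Z$ is not contained in $Z$, so by Bezout it meets $Z$ in at most $d$ points, and in particular it contains at most $d$ points of $\frak S_Z \setminus \frak S_Z' \subset Z$. Counting incidences between $\frak S_Z \setminus \frak S_Z'$ and $\frak L \setminus \frak L_Z$ in two ways yields
\[
  \tfrac{k}{3}\,|\frak S_Z \setminus \frak S_Z'| \;\le\; |\frak L \setminus \frak L_Z|\cdot d \;\le\; L\, d.
\]

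Now I plug in the bound $d \lesssim L^2 S^{-1} k^{-3}$ from Lemma \ref{pointsinsurface}, which gives
\[
  |\frak S_Z \setminus \frak S_Z'| \;\lesssim\; \frac{L\,d}{k} \;\lesssim\; \frac{L^3}{S\, k^{4}}.
\]
The goal is that this be at most, say, $\tfrac{1}{2}\cdot 10^{-7} S$, which is equivalent to $S^2 \gtrsim L^3 k^{-4}$, i.e.\ $S \gtrsim L^{3/2} k^{-2}$. But this is exactly the standing assumption \eqref{bigs}: $S \ge A L^{3/2} k^{-2} + L k^{-1}$. Choosing the constant $A$ large enough (independently of everything else, just against the implicit constants in Lemma \ref{pointsinsurface} and in the displayed inequality above) makes the desired bound hold.

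Combining the two pieces gives $|\frak S \setminus \frak S_Z'| \le 10^{-8} S + \tfrac{1}{2}\cdot 10^{-7} S \le 10^{-7} S$, as required. There is no real obstacle here — the content is entirely in the already-proved fact that the degree of $Z$ is a factor of $k$ smaller than the average number of points on a line of $\frak L$, so most lines through $x \in \frak S_Z$ will actually lie in $Z$. The only care needed is the inequality $k - 2 \ge k/3$, which uses the hypothesis $k \ge 3$; this explains why the case $k = 2$ was handled separately in Section 3.
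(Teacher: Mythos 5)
Your proof is correct and follows essentially the same route as the paper: split off $\frak S \setminus \frak S_Z$ using Lemma \ref{pointsinsurface}, then double-count incidences between $\frak S_Z \setminus \frak S_Z'$ and $\frak L \setminus \frak L_Z$, using that each such point meets $\ge k-2$ lines outside $\frak L_Z$ while each such line meets $Z$ in $\le d$ points. The only cosmetic difference is that you invoke Bezout and inequality \eqref{bigs} directly in place of the paper's intermediate Lemmas \ref{degest1} and \ref{linesinsurface}, which package exactly the same computation ($d$ being a factor of $k$ below the average line load $SkL^{-1}$).
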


\begin{proof} Lemma \ref{pointsinsurface} tells us that $| \frak S \setminus \frak S_Z| < 10^{-8} S$.

Suppose $x$ is a point in $\frak S_Z \setminus \frak S_Z'$.  The point $x$ lies in at least $k$ lines from $\frak L$,
but it lies in at most two lines from $\frak L_Z$.  So $x$ lies in $\ge k-2$ lines of $\frak L \setminus \frak L_Z$.

$$ (k-2) | \frak S_Z \setminus \frak S_Z' | \le I ( \frak S_Z \setminus \frak S_Z' , \frak L \setminus \frak L_Z). $$

On the other hand, we showed in Lemma \ref{linesinsurface} that each line of $\frak L \setminus \frak L_Z$ contains
$\le 10^{-8} S k L^{-1}$ points of $\frak S_Z$.  Therefore

$$ I  ( \frak S_Z \setminus \frak S_Z' , \frak L \setminus \frak L_Z) \le I (\frak S_Z , \frak L \setminus \frak L_Z)
\le (10^{-8} S k L^{-1} ) L. $$

Combining these inequalities, and recalling that $k \ge 3$, we see that

$$ |\frak S_Z \setminus \frak S_Z'| \le 10^{-8} { k \over {k-2}} S \le 3 \cdot 10^{-8} S. $$

\end{proof}

We let $\frak S_{crit} \subset \frak S_Z'$ denote the critical points in $\frak S_Z'$ and we let $\frak S_{flat} \subset
\frak S_Z'$ denote the flat points of $\frak S_Z'$.  We call a line $l \subset Z$ a critical line of $Z$ if every point
of $l$ is a critical point of Z.  We call a line $l \subset Z$ a flat line if it is not a critical line and every regular point in $l$
is flat.  

Our next goal is to show that $Z$ contains many flat lines, which is a step to showing that $Z$ contains a plane.  In order
to do this, we show that the flat points of $Z$ are defined by the vanishing of certain polynomials.

\begin{lemma} \label{flat} Let $x$ be a regular point of $Z$.  
Then $x$ is flat if and only if the following three polynomial vectors vanish at $x$:

$$ \nabla_{e_j \times \nabla p} \nabla p \times \nabla p, j = 1, 2, 3. $$

\end{lemma}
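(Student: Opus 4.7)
The plan is to translate each polynomial vector into a condition on the Hessian of $p$ restricted to the tangent plane $T_x Z$, then observe that the three tangent vectors $e_j \times \nabla p$ span $T_x Z$, so collectively these three conditions encode exactly the vanishing of the second fundamental form.

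First I would unpack the notation. By definition $\nabla_v \nabla p = (\operatorname{Hess} p)\, v$, so each of the three vectors equals
\[
\bigl[(\operatorname{Hess} p)(e_j \times \nabla p)\bigr] \times \nabla p.
\]
Since $x$ is a regular point of $Z$, $\nabla p(x) \neq 0$ and the tangent plane $T_x Z$ is the orthogonal complement of $\nabla p(x)$. The second fundamental form of $Z$ at $x$, with respect to the unit normal $\nabla p(x)/|\nabla p(x)|$, is (up to a nonzero scalar) the restriction to $T_x Z$ of the symmetric bilinear form $(u,w) \mapsto u^{T}(\operatorname{Hess} p)(x)\, w$. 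So flatness of $x$ is equivalent to $\langle (\operatorname{Hess} p)\,u,\,w\rangle = 0$ for every $u,w \in T_x Z$.

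Next I would use two elementary linear algebra facts about the cross product. First, for any $v \in \mathbb{R}^3$, the vector $v \times \nabla p$ is perpendicular to $\nabla p$, hence lies in $T_x Z$; moreover the map $v \mapsto v \times \nabla p$ has one-dimensional kernel $\operatorname{span}(\nabla p)$, so its image is all of $T_x Z$, and in particular the three vectors $e_1 \times \nabla p$, $e_2 \times \nabla p$, $e_3 \times \nabla p$ span $T_x Z$. Second, for a nonzero vector $N$, $u \times N = 0$ if and only if $u$ is parallel to $N$. Applied with $N = \nabla p$ and $u = (\operatorname{Hess} p)(e_j \times \nabla p)$, we get that the $j$-th vector in the lemma vanishes at $x$ if and only if $(\operatorname{Hess} p)(e_j \times \nabla p)$ is parallel to $\nabla p$, i.e.\ it has zero component in $T_x Z$, i.e.\ $\langle (\operatorname{Hess} p)(e_j \times \nabla p),\,w\rangle = 0$ for every $w \in T_x Z$.

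Putting the pieces together: all three vectors vanish at $x$ iff $\langle (\operatorname{Hess} p)(e_j \times \nabla p),\, w\rangle = 0$ for every $w \in T_x Z$ and every $j \in \{1,2,3\}$. Because the $e_j \times \nabla p$ span $T_x Z$, this is equivalent to $\langle (\operatorname{Hess} p)\,u,\,w\rangle = 0$ for all $u, w \in T_x Z$, which by the paragraph above is exactly the flatness of $x$. The only delicate point is keeping straight which statements require regularity of $x$: we use $\nabla p(x) \neq 0$ both to identify $T_x Z$ as $(\nabla p)^\perp$ and to invoke the cross-product kernel fact, but no deeper geometric input is needed. The argument is essentially linear algebra together with the identification of the second fundamental form with the Hessian restricted to the tangent plane, so I do not anticipate a significant obstacle beyond careful bookkeeping.
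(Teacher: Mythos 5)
Your proof is correct, and it is essentially the standard argument from the references the paper points to (Section~3 of \cite{GK} and Section~2 of \cite{EKS}); the paper itself does not reproduce the proof. You correctly identify $\nabla_v \nabla p$ with $(\operatorname{Hess} p)\,v$, identify the second fundamental form at a regular point (up to a nonzero scalar) with the Hessian restricted to $T_xZ = (\nabla p)^\perp$, and use the two cross-product facts — that $v \mapsto v\times\nabla p$ has image exactly $T_xZ$, and that $u\times\nabla p = 0$ iff $u\parallel \nabla p$ — to translate the vanishing of the nine polynomial components into the vanishing of the Hessian's restriction to $T_xZ$. All the linear-algebra steps check out, and the only use of regularity ($\nabla p(x)\neq 0$) is flagged where it is needed.
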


Here, $e_j$ are the coordinate vectors of ${\bf R^3}$, and $\times$ denotes the
cross product of vectors.  Each vector above has three components, so we have a total of nine polynomials.
Each polynomial has degree $\le 3 d$.  For more explanation, see Section 3 of \cite{GK} or Section 2 of
\cite{EKS}.  In \cite{EKS}, they use a more efficient set of polynomials: only three polynomials.

To find critical or flat lines, we use the following simple lemmas.

\begin{lemma} \label{critline} Suppose that a line $l$ contains more than $d$ critical points of $Z$.
Then $l$ is a critical line of $Z$.  \end{lemma}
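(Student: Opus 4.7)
The plan is to parametrize the line and reduce each condition defining ``critical'' to a univariate polynomial vanishing argument, exactly in the spirit of the standard ``line meets a variety too many times, hence lies in it'' lemma.

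First, I would fix a parametrization $\gamma(t) = a + t v$ of $l$, where $a \in l$ and $v$ is a direction vector. Recall that by the setup preceding the lemma, $Z$ is the zero set of a polynomial $p$ of degree $d$ which we may take to be square-free, and a critical point of $Z$ is a point $x \in Z$ at which $\nabla p(x) = 0$. The key observation is that the four polynomials $p, \partial_x p, \partial_y p, \partial_z p$ have degrees at most $d, d-1, d-1, d-1$ respectively, so when composed with $\gamma$ they become univariate polynomials in $t$ of degrees at most $d$ and $d-1$.

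Next I would use the hypothesis that $l$ contains more than $d$ critical points of $Z$, say at least $d+1$ of them, corresponding to distinct parameter values $t_1, \ldots, t_{d+1}$. At each $t_i$ the point $\gamma(t_i)$ lies in $Z$, so $(p \circ \gamma)(t_i) = 0$; and at each $t_i$ we have $\nabla p(\gamma(t_i)) = 0$, so $(\partial_j p \circ \gamma)(t_i) = 0$ for $j = 1, 2, 3$. The univariate polynomial $p \circ \gamma$ has degree $\le d$ and at least $d+1$ roots, hence is identically zero, so $l \subset Z$. Similarly, each $\partial_j p \circ \gamma$ has degree $\le d-1$ and at least $d+1 > d-1$ roots, hence vanishes identically on $l$, so $\nabla p$ vanishes at every point of $l$.

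Combining these, every point of $l$ lies in $Z$ and is a zero of $\nabla p$, which is precisely the definition of a critical line. There is no real obstacle here: the whole content is the standard Bezout-on-a-line input of counting degrees, and the only bookkeeping issue is remembering that $\partial_j p$ has degree one less than $p$, which is what makes the bound ``more than $d$'' (rather than something larger) the right threshold.
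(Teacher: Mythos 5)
Your proof is correct and is essentially identical to the paper's: restrict $p$ and the components of $\nabla p$ to the line, observe they become univariate polynomials of degree $\le d$ and $\le d-1$ respectively, and conclude they vanish identically because they have more than $d$ roots. The paper states this more tersely but the argument is the same.
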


\begin{proof} At each critical point of $Z$, the polynomial $p$ and all the components of $\nabla p$
vanish.  Since $p$ has degree $d$, we conclude that $p$ vanishes on every point of $l$.  Since
$\nabla p$ has degree $d-1$, we conclude that $\nabla p$ vanishes on every point of $l$.  Hence
$l$ is a critical line of $Z$.
\end{proof}

\begin{lemma} \label{flatline} Suppose that a line $l$ contains more than $3d$ flat points
of $Z$.  Then $l$ is a flat line of $Z$.
\end{lemma}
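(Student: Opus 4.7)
The plan is to imitate the proof of Lemma \ref{critline}, but using the polynomial characterization of flat points from Lemma \ref{flat} instead of the defining equations of critical points. Since each of the nine polynomials in the statement of Lemma \ref{flat} has degree at most $3d$, having more than $3d$ flat points on a line will force all of them to vanish identically on $l$, which in turn forces $l$ to be a flat line.

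More precisely, I would proceed as follows. First, since each flat point lies in $Z$, the polynomial $p$ vanishes at more than $3d \geq d$ points of $l$; as $p$ has degree $d$, this forces $p$ to vanish on all of $l$, so $l \subset Z$. Next, at each flat point, the nine polynomial expressions
$$ \nabla_{e_j \times \nabla p} \nabla p \times \nabla p, \qquad j = 1,2,3, $$
vanish by Lemma \ref{flat}. Each component of these vectors is a polynomial of degree at most $3d$. Since each such polynomial vanishes at more than $3d$ points on $l$, each must vanish identically on $l$. Therefore these nine polynomials vanish at \emph{every} point of $l$, so by the reverse direction of Lemma \ref{flat}, every regular point of $l$ is a flat point of $Z$.

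Finally, I need to verify that $l$ is not a critical line, so that it qualifies as a flat line under the definition given just before Lemma \ref{flat}. But this is immediate from our hypothesis: by the definition preceding Lemma \ref{flat}, a flat point is by construction a \emph{regular} point of $Z$, so $l$ contains more than $3d \geq 1$ regular points. A critical line consists entirely of critical (i.e., non-regular) points, so $l$ cannot be a critical line. Thus $l$ is a flat line of $Z$, completing the proof. The only mildly non-routine step is observing the correct bound $3d$ for the degrees of the flatness polynomials, which is given in Lemma \ref{flat}; everything else is a direct application of the one-variable fact that a degree $D$ polynomial vanishing at more than $D$ points on a line vanishes identically on that line.
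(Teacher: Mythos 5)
Your proof is correct and follows essentially the same approach as the paper: apply the polynomial characterization from Lemma \ref{flat}, note the degree bound $3d$, conclude the nine polynomials (and $p$ itself) vanish identically on $l$, and then observe that $l$ cannot be a critical line because the hypothesized flat points are by definition regular points. The reasoning and the structure match the paper's proof exactly.
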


\begin{proof} Let $x_1, ..., x_{3d+1}$ be flat points of $Z$ contained in $l$.  By Lemma \ref{flat}, each polynomial 
$\nabla_{e_j \times \nabla p} \nabla p \times \nabla p$ vanishes at $x_i$.  Since the degree of
these polynomials is $\le 3d$, we conclude that each of these polynomials vanishes on $l$.  Similarly,
$p$ vanishes on $l$.  Therefore, the line $l$ lies in $Z$ and every regular point in $l$ is a flat point.  But
by definition, $x_i$ are regular points of $Z$.  Therefore, $l$ is not a critical line, and it must be a flat line.
\end{proof}

Using these lemmas, we will prove that a definite fraction of the lines of $\frak L$ are either critical or flat.

We define $\frak L_Z'$ to be the set of lines of $\frak L_Z$ that contain at least $(1/200) S k L^{-1}$ points
of $\frak S_Z'$.  

\begin{lemma} Each line in $\frak L_Z'$ is either critical or flat.
\end{lemma}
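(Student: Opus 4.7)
The plan is to apply Lemma \ref{critflat} pointwise and then use a pigeonhole together with Lemmas \ref{critline} and \ref{flatline}, with the key numerical input being Lemma \ref{degest1}.

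Let $l\in\frak L_Z'$. By definition $l\subset Z$ and $l$ contains at least $N:=(1/200)SkL^{-1}$ points of $\frak S_Z'$. By Lemma \ref{critflat}, every point of $\frak S_Z'$ is either a critical point or a flat point of $Z$. Partition the $\ge N$ distinguished points on $l$ into those that are critical points of $Z$ and those that are flat points of $Z$.

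Suppose first that more than $d$ of these points are critical points of $Z$. Then by Lemma \ref{critline}, the line $l$ is a critical line of $Z$, and we are done. Otherwise, at most $d$ of the $\ge N$ points are critical, so at least $N-d$ are flat points of $Z$. Now I invoke the quantitative estimate in Lemma \ref{degest1}, which (assuming $A$ was chosen sufficiently large) gives $d<10^{-8}SkL^{-1}$. Since $(1/200)SkL^{-1}$ exceeds $4d$ by a large margin (indeed $(1/200)=5\cdot 10^{-3}$ is much bigger than $4\cdot 10^{-8}$), we conclude
\[
N-d \;\ge\; (1/200)SkL^{-1} - 10^{-8}SkL^{-1} \;>\; 3d.
\]
Thus $l$ contains more than $3d$ flat points of $Z$, and Lemma \ref{flatline} shows that $l$ is a flat line of $Z$.

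In either case $l$ is critical or flat, which is exactly what was to be shown. There is no real obstacle here: the lemma is essentially a dichotomy argument, and the only nontrivial ingredient is the degree bound from Lemma \ref{degest1}, which is precisely what makes the threshold $(1/200)SkL^{-1}$ dominate both $d$ and $3d$.
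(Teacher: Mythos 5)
Your proof is correct and follows essentially the same route as the paper: apply Lemma \ref{critflat} to classify each point of $\frak S_Z'$ on $l$ as critical or flat, then use the degree bound from Lemma \ref{degest1} together with Lemmas \ref{critline} and \ref{flatline} to conclude. The paper's pigeonhole is phrased as ``at least half the $(1/200)SkL^{-1}$ points are critical or at least half are flat,'' whereas you split on whether more than $d$ are critical; these are minor variants of the same dichotomy and both close the argument.
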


\begin{proof}
Since every point of $\frak S_Z'$ is either critical or flat, each line in $\frak L_Z'$ contains
either $(1/400) S k L^{-1}$ critical points or $(1/400) S k L^{-1}$ flat points.  But by Lemma \ref{degest1}, 
$d \le 10^{-8} S k L^{-1}$.  So by Lemmas \ref{critline} and \ref{flatline}, each line
of $\frak L_Z'$ is either critical or flat. \end{proof}

Now we show that $\frak L_Z'$ contains a definite fraction of the lines of $\frak L$.

\begin{lemma} \label{manylines} The number of lines in $\frak L_Z'$ is $\ge (1/200) L$.
\end{lemma}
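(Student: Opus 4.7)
The plan is to deduce this lemma by a double-counting / incidence argument that combines the size bound on $\frak L_Z$ proved two lemmas earlier with the smallness of $\frak S \setminus \frak S_Z'$ proved in Lemma \ref{pointsinsurface'}.

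First I would reintroduce the set $\frak L_0 \subset \frak L$ of the $\ge (1/100) L$ lines each containing $\ge (1/100) S k L^{-1}$ points of $\frak S$ (these are the lines supplied by the uniformity hypothesis). Repeating the incidence estimate used in the proof that $|\frak L_Z| \ge (1/200) L$, but applied to $\frak L_0 \setminus \frak L_Z$ rather than $\frak L_0$ directly, gives $|\frak L_0 \cap \frak L_Z| \ge (1/100) L - O(10^{-6}) L$. So a definite fraction of lines in $\frak L_Z$ come from $\frak L_0$, i.e.\ they carry $\ge (1/100) S k L^{-1}$ points of $\frak S$.

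Next I would argue that most of those lines actually lie in $\frak L_Z'$. Fix a line $l \in \frak L_0 \cap \frak L_Z$ that fails to lie in $\frak L_Z'$. By definition of $\frak L_Z'$, $l$ contains fewer than $(1/200) S k L^{-1}$ points of $\frak S_Z'$, while $l$ contains $\ge (1/100) S k L^{-1}$ points of $\frak S$. Subtracting, $l$ must contain at least $(1/200) S k L^{-1}$ points of $\frak S \setminus \frak S_Z'$. Summing over all such bad $l$, and using the standard upper bound $I(\frak S \setminus \frak S_Z', \frak L) \le 2k |\frak S \setminus \frak S_Z'|$ (since each point of $\frak S$ lies in at most $2k$ lines of $\frak L$) together with Lemma \ref{pointsinsurface'}, we get
$$ \tfrac{1}{200} S k L^{-1} \cdot |(\frak L_0 \cap \frak L_Z) \setminus \frak L_Z'| \le 2k \cdot 10^{-7} S, $$
so $|(\frak L_0 \cap \frak L_Z) \setminus \frak L_Z'| \lesssim 10^{-5} L$. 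Combining with the first paragraph yields $|\frak L_Z'| \ge |\frak L_0 \cap \frak L_Z \cap \frak L_Z'| \ge (1/100) L - O(10^{-5}) L \ge (1/200) L$, as required.

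There is no real obstacle here — the argument is essentially the same incidence balance that gave $|\frak L_Z| \ge (1/200) L$, now iterated one more step with $\frak S_Z'$ in place of $\frak S_Z$. The only care needed is to make sure the constants in Lemma \ref{pointsinsurface'} (the $10^{-7}$) and in the definitions ($1/100$ vs.\ $1/200$ vs.\ $1/400$) are compatible, which they are; the small loss from passing from $\frak S_Z$ to $\frak S_Z'$ is what accounts for the factor dropping from $1/100$ all the way down to $1/200$.
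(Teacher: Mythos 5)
Your argument is correct and follows essentially the same incidence-balance as the paper's own proof: both compare the $\ge (1/100) S k L^{-1}$ points of $\frak S$ on each bad line of $\frak L_0$ against the $<(1/200) S k L^{-1}$ points of $\frak S_Z'$ it can carry, then double-count the surplus against $2k|\frak S \setminus \frak S_Z'| \le 2k \cdot 10^{-7} S$ from Lemma~\ref{pointsinsurface'}. The only organizational difference is that you first intersect $\frak L_0$ with $\frak L_Z$ in a preliminary pass, whereas the paper works directly with $\frak L_0 \setminus \frak L_Z'$ in a single pass (implicitly using Lemma~\ref{linesinsurface} to cover the case of lines not contained in $Z$), and both routes yield the same constants.
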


\begin{proof} Recall that we assumed in the statement of Proposition \ref{uniform} that there
are at least $(1/100) L$ lines of $\frak L$ that each contain $\ge (1/100) S k L^{-1}$ points of
$\frak S$.  We denote these lines by $\frak L_0 \subset \frak L$.

Suppose a line $l$ lies in $\frak L_0 \setminus \frak L_Z'$.  Then $l$ contains at least
$(1/100) S k L^{-1}$ points of $\frak S$.  But it contains less than $(1/200) S k L^{-1}$ points
of $\frak S_Z'$.  Therefore, it contains at least $(1/200) S k L^{-1}$ points of $\frak S \setminus \frak S_Z'$.
So we get the following inequality.

$$ (1/200) S k L^{-1} | \frak L_0 \setminus \frak L_Z' | \le I(\frak S \setminus \frak S_Z', \frak L_0 \setminus \frak L_Z').$$

But since each point of $\frak S$ lies in at most $2 k$ lines of $\frak L$,

$$ I(\frak S \setminus \frak S_Z', \frak L_0 \setminus \frak L_Z') \le I(\frak S \setminus \frak S_Z', \frak L) \le
2k |\frak S \setminus \frak S_Z'|. $$

Lemma \ref{pointsinsurface'} says that $| \frak S \setminus \frak S_Z'| \le 10^{-7} S$.  Assembling all our inequalities,
we see that

$$(1/200) S k L^{-1} |\frak L_0 \setminus \frak L_Z'| \le 2 k \cdot 10^{-7} S. $$

Simplifying this expression, we see that 

$$ | \frak L_0 \setminus \frak L_Z'| \le 4 \cdot 10^{-5} L. $$

So almost all the lines of $\frak L_0$ lie in $\frak L_Z'$.  In particular, $\frak L_Z'$ contains $\ge (1/200) L$ lines.
\end{proof}

Next we bound the number of critical lines in $Z$.

\begin{lemma} A surface $Z$ of degree $d$ contains $\le d^2$ critical lines.
\end{lemma}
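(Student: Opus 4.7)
The plan is to bound the critical lines by intersecting $Z$ with the vanishing set of a single partial derivative of $p$ and applying Lemma \ref{Bezoutlines}. Any critical line $l \subset Z$ has $p$ and every component of $\nabla p$ vanishing identically on $l$; in particular $l$ lies in the zero set of both $p$ (degree $d$) and $\partial p/\partial x$ (degree $\le d-1$). So once I have shown these two polynomials are coprime, Lemma \ref{Bezoutlines} gives at most $d(d-1) \le d^2$ common lines, which is the desired bound.

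The only point to check is coprimality. Recall that $p$ has been normalized to square-free form $p = p_1 \cdots p_m$ with distinct irreducible factors. I may apply a generic linear change of coordinates on $\mathbb{R}^3$ to arrange that $\partial p_j/\partial x \not\equiv 0$ for every $j$: the obstruction for a single $j$ is that $V(p_j)$ be a cylinder with generators parallel to the $x$-axis, and a generic rotation avoids this finite list of bad axes simultaneously. After this normalization, for any irreducible factor $p_j$ the identity
\[
\frac{\partial p}{\partial x} = \frac{\partial p_j}{\partial x}\cdot \frac{p}{p_j} + p_j \cdot \frac{\partial}{\partial x}\!\left(\frac{p}{p_j}\right),
\]
combined with $\gcd(p_j, p/p_j) = 1$ (valid since $p$ is square-free), reduces the divisibility $p_j \mid \partial p/\partial x$ to $p_j \mid \partial p_j/\partial x$. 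But $\partial p_j/\partial x$ is nonzero and of strictly smaller degree than $p_j$, so this is impossible. Hence $\gcd(p, \partial p/\partial x) = 1$.

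Applying Lemma \ref{Bezoutlines} with $m = d$ and $n \le d-1$ then yields that $p$ and $\partial p/\partial x$ share at most $d(d-1) \le d^2$ lines, and in particular $Z$ contains at most $d^2$ critical lines. The main (minor) obstacle is the coprimality step, which the square-free normalization plus a generic-coordinate choice handles cleanly; the rest is a direct application of the Bezout-type lemma already proved in the paper.
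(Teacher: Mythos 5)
Your proof is correct and takes essentially the approach the paper has in mind: the paper does not give details but simply cites Bezout applied to $p$ and $\nabla p$ (via Proposition 3 of \cite{EKS}), and your argument carries this out cleanly by applying Lemma~\ref{Bezoutlines} to $p$ and a single directional derivative, with the generic rotation and square-free normalization correctly handling the coprimality that the ``Bezout'' phrasing silently assumes.
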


This lemma follows from Bezout's theorem applied to $p$ and $\nabla p$.  See 
Proposition 3 in \cite{EKS}.

If the constant $A$ from inequality $4.1$ is sufficiently large, then $d^2$ will be much less than $L$.
We record this calculation in the next lemma.

\begin{lemma} \label{degest2} If $A$ is sufficiently large, then $d \le 10^{-4} L^{1/2}$.
\end{lemma}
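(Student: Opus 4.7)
The plan is to combine the degree bound from Lemma \ref{pointsinsurface}, which gives $d \lesssim L^2 S^{-1} k^{-3}$, with the hypothesis from inequality \ref{bigs} that $S \ge A L^{3/2} k^{-2}$. Substituting the latter into the former should produce a bound of the form $d \lesssim A^{-1} L^{1/2} k^{-1}$, and since $k \ge 3$, this is at most $A^{-1} L^{1/2}$ up to an absolute constant.

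More concretely, I would first write the bound from Lemma \ref{pointsinsurface} as $d \le C_0 L^2 S^{-1} k^{-3}$ for some absolute constant $C_0$ (the same constant implicit in the $\lesssim$ there, which does not depend on $A$). Then, discarding the second term in \ref{bigs} and keeping only $S \ge A L^{3/2} k^{-2}$, I substitute to get
\[
d \le C_0 L^2 k^{-3} \cdot \frac{1}{A L^{3/2} k^{-2}} = \frac{C_0 L^{1/2}}{A k} \le \frac{C_0 L^{1/2}}{A},
\]
using $k \ge 1$ (in fact $k \ge 3$) in the last step. Taking $A$ large enough that $C_0/A \le 10^{-4}$ gives the desired bound $d \le 10^{-4} L^{1/2}$.

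I do not expect any obstacle here; this is purely a book-keeping step consolidating two earlier inequalities, very much in the same spirit as Lemma \ref{degest1}. The only thing to be slightly careful about is that the constant $C_0$ in the estimate from Lemma \ref{pointsinsurface} is independent of $A$ (it depends only on the constants from Theorem \ref{polycell} and the Szemer\'edi-Trotter theorem, as discussed at the end of the proof of Lemma \ref{pointsinsurface}), so enlarging $A$ really does beat $C_0$. Once that is observed, the conclusion is immediate.
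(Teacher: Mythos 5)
Your proof is correct and is essentially the same as the paper's: both combine the degree bound $d \lesssim L^2 S^{-1} k^{-3}$ from Lemma \ref{pointsinsurface} with inequality \ref{bigs} to get $d \lesssim A^{-1} L^{1/2} k^{-1}$, then choose $A$ large. The paper just phrases the substitution as multiplication by $1 \le A^{-1} S L^{-3/2} k^2$, which is the same algebra.
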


\begin{proof} The inequality 4.1 implies that $1 \le A^{-1} S L^{-3/2} k^2$.  Therefore

$$d \le d A^{-1} S L^{-3/2} k^2 \lesssim A^{-1} L^{1/2} k^{-1}. $$

Choosing $A$ sufficiently large finishes the proof. \end{proof}

In particular, we see that $Z$ contains at most $d^2 < 10^{-8} L$ critical lines.  Since $\frak L_Z'$ contains
at least $(1/200) L $ lines, we see that most of these lines must be flat.  In particular, $\frak L_Z'$ contains
at least $(1/300) L$ flat lines of $Z$.

We are trying to prove that $Z$ contains some planes.  Let $Z_{pl}$ denote the union of all planes
contained in $Z$.
We let $\tilde Z$ denote the rest of $Z$ so that $Z = Z_{pl} \cup \tilde Z$.  In terms of polynomials.
$Z$ is the vanishing set of $p$.  The polynomial $p$ factors into irreducibles: $p = p_1 p_2 ...$.
Some of these factors have degree 1, and some factors have degree more than 1.  Each factor of
degree 1 defines a plane, and $Z_{pl}$ is the union of these planes.  The product of the remaining
factors is a polynomial $\tilde p$, and $\tilde Z$ is the zero-set of $\tilde p$.  
A line which lies in both $Z_{pl}$ and $\tilde Z$ is actually a critical line of $Z$.  So a flat line of
$Z$ lies either in $Z_{pl}$ or in $\tilde Z$, but not both.  A flat line of $Z$ that lies in $\tilde Z$ is
a flat line of $\tilde Z$.  The number of flat lines in a surface of degree $\le d$ is bounded by
the following lemma from \cite{EKS}.

\begin{lemma} (\cite{EKS}, Proposition 8) If $Z$ is an algebraic surface of degree $\le d$ with no planar component, then $Z$ 
contains $\le 3d^2$ flat lines.
\end{lemma}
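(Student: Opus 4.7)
The plan is to mimic the Bezout-type argument that the authors used for critical lines (Lemma \ref{critline}) and for Corollary \ref{flecbound}, but using the flatness polynomials from Lemma \ref{flat} in place of $\nabla p$ or $\operatorname{Fl}(p)$. Write $p$ for a square-free defining polynomial for $Z$, of degree $\le d$. By Lemma \ref{flat}, the flat regular points of $Z$ are exactly the points of $Z$ at which nine explicit polynomials $\Pi_1, \dots, \Pi_9$, each of degree at most $3d$, simultaneously vanish.

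First I would observe that each flat line $l$ of $Z$ is annihilated by every $\Pi_i$. Indeed, $l$ contains only finitely many critical points of $Z$ (at most $d$, by the same one-variable argument as in Lemma \ref{critline}), so $l$ contains infinitely many flat regular points; since $\deg \Pi_i \le 3d$, this forces $\Pi_i|_l \equiv 0$. Hence every flat line lies in the common zero set of $p$ and each $\Pi_i$.

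Next I would produce a single polynomial $\Pi$ of degree $\le 3d$ such that (a) $\Pi$ vanishes on every flat line of $Z$, and (b) $\Pi$ and $p$ share no irreducible factor. Here is where the planar-component hypothesis enters. Factor $p = p_1 \cdots p_m$ into irreducibles. If every $\Pi_i$ vanished identically on some component $\{p_j = 0\}$, then every regular point of that component would be flat; a standard differential-geometric fact (a connected smooth surface with vanishing second fundamental form is contained in an affine plane) then forces $\{p_j=0\}$ to be a plane, contradicting the hypothesis. So for each $j$ there exists at least one $\Pi_i$ with $p_j \nmid \Pi_i$. A generic $\mathbb{R}$-linear combination $\Pi := \sum_i c_i \Pi_i$ therefore satisfies $p_j \nmid \Pi$ for every $j$ simultaneously, and still has degree $\le 3d$ and vanishes on every flat line.

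Finally, by Bezout in the form of Lemma \ref{Bezoutlines}, the number of lines lying simultaneously in $\{p=0\}$ and $\{\Pi=0\}$ is at most $(\deg p)(\deg \Pi) \le d \cdot 3d = 3d^2$. Since every flat line of $Z$ is such a line, $Z$ contains at most $3d^2$ flat lines, which is the desired bound. The only subtle step is (b) above—ruling out the possibility that the flatness polynomials are somehow absorbed into $p$ without $Z$ having a planar component—and that is precisely what the differential-geometric characterization of planes furnishes.
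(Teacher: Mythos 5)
Your argument reproduces the Bezout-with-flatness-polynomials strategy of Elekes--Kaplan--Sharir, to whom the paper attributes this lemma (it is quoted, not reproved, in the present paper), and the main line is sound: every flat line lies in the common zero set of $p$ and all the flatness polynomials $\Pi_i$ of Lemma~\ref{flat}; a non-critical line contains infinitely many regular points, so $\Pi_i|_l\equiv 0$; and once you have a single $\Pi$ of degree $\le 3d$ vanishing on all flat lines and coprime to $p$, Lemma~\ref{Bezoutlines} closes the count at $3d^2$. The degree accounting and the generic-linear-combination device are both fine.

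The step that is not airtight is the claim that for each irreducible factor $p_j$ there is some $\Pi_i$ with $p_j\nmid\Pi_i$. Your justification goes through the real locus: if all $\Pi_i$ vanished on $\{p_j=0\}$ then every regular point would be flat, and a smooth connected surface with vanishing second fundamental form is planar. But an $\mathbb{R}$-irreducible factor of degree $\ge 2$ need not have a two-dimensional real locus. For instance $p_j=x^2+y^2$ has real zero set the $z$-axis, and a direct computation shows that every flatness polynomial of $x^2+y^2$ is a multiple of $x^2+y^2$ (most are identically zero, and the remaining one equals $-8(x^2+y^2)$ up to sign); the differential-geometric argument gives no information, the asserted $\Pi_i$ does not exist, and $\gcd(p,\Pi)$ may fail to be trivial. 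This is a patchable gap rather than a fatal flaw: such degenerate components carry no flat lines at all, since their entire real locus is critical. So run the Bezout count component by component instead of via a single global $\Pi$. For each $j$ whose component actually contains a flat line $l$, that line has regular points of $Z$ lying on $\{p_j=0\}$ alone, hence $\{p_j=0\}$ has a genuine smooth two-dimensional piece there, and your planarity argument now does apply to produce some $\Pi_{i(j)}$ with $p_j\nmid\Pi_{i(j)}$; Bezout applied to the pair $(p_j,\Pi_{i(j)})$ bounds the flat lines contained in that component by $3d\cdot\deg p_j$, and summing over $j$ gives $3d\cdot\deg p\le 3d^2$. (As a minor aside, the flatness polynomials actually have degree $3d-4$, so both arguments slightly over-count, but $3d^2$ is the bound the lemma asserts.)
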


We have seen that $\frak L$ contains at least $(1/300) L$ flat lines of $Z$.  But $\tilde Z$ contains only
$3 \cdot 10^{-8} L$ flat lines.  The rest of the flat lines lie in $Z_{pl}$.  In particular, $\frak L$ contains
at least $(1/400) L$ lines in $Z_{pl}$.

Finally, we observe that the number of planes in $Z_{pl}$ is $\le d \lesssim L^2 S^{-1} k^{-3}$.  So one of these
planes must contain $\gtrsim S k^3 L^{-1}$ lines of $\frak L$.  In other words, $B \gtrsim S k^3 L^{-1}$.

At several points in the argument, we needed $A$ to be sufficiently large.  We now choose $A$ large enough for those
steps.  We conclude that either  $S \le A L^2 k^{-3/2} + L k^{-1}$ or else $S \lesssim L B k^{-3}$.   This finishes the proof of 
Proposition \ref{uniform}. \end{proof}

Proposition \ref{uniform} is the heart of the proof of Theorem \ref{incidence}.   We are going to reduce the general case 
to Proposition \ref{uniform}.  First we remove the assumption that many lines have roughly the average number of points.

\begin{proposition} \label{incidence-} Let $k \ge 3$.  Let $\frak L$ be a set of $L$ lines in ${\bf R^3}$ with
$\le B$ lines in any plane.  Let $\frak S$ be a set of $S$ points so that each point meets
between $k$ and $2k$ lines of $\frak L$.

Then $S \le C [ L^{3/2} k^{-2} + L B k^{-3} + L k^{-1}] $.
\end{proposition}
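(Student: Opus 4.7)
The plan is induction on $L$, reducing the non-uniform situation to the uniform one handled by Proposition \ref{uniform}. At each step of the induction, I would first test whether the uniformity hypothesis of Proposition \ref{uniform} is satisfied, namely whether at least $L/100$ lines of $\frak L$ each contain at least $SkL^{-1}/100$ points of $\frak S$. If it holds, Proposition \ref{uniform} gives the bound directly and the step is finished.

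If instead the hypothesis fails, more than $99L/100$ lines of $\frak L$ are \emph{thin}, each meeting fewer than $SkL^{-1}/100$ points of $\frak S$. These thin lines collectively account for at most $Sk/100$ incidences with $\frak S$. Let $\frak L'$ denote the remaining \emph{popular} lines, so that $|\frak L'| \le L/100$. By Markov's inequality, the subset of $\frak S$ consisting of points meeting at most $k/2$ thin lines has cardinality at least $49S/50$; by the original upper bound $2k$ on incidences, each such point meets between $k/2$ and $2k$ lines of $\frak L'$. Splitting this subset dyadically according to its precise $\frak L'$-incidence---into the classes $[k/2, k)$ and $[k, 2k]$---yields a set $\frak S'$ with $|\frak S'| \ge 49S/100$ whose points meet between $k'$ and $2k'$ lines of $\frak L'$ for some $k' \in \{k/2, k\}$.

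Now I would apply the inductive hypothesis to the triple $(\frak L', \frak S', k')$. The plane bound $B$ is inherited since $\frak L' \subseteq \frak L$, and $|\frak L'| \le L/100 < L$, so the induction applies. Substituting $L' \le L/100$ and $k' \ge k/2$ into the bound $C[(L')^{3/2}(k')^{-2} + L'B(k')^{-3} + L'(k')^{-1}]$, each term is at most a small constant fraction of $C[L^{3/2}k^{-2} + LBk^{-3} + Lk^{-1}]$---the worst case is the middle term, which picks up at most a factor $8/100$. Combined with $|\frak S'| \ge S/4$, this yields $S$ strictly smaller than $C[L^{3/2}k^{-2} + LBk^{-3} + Lk^{-1}]$, closing the induction.

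The main obstacle is calibrating constants: the thinness threshold $SkL^{-1}/100$ must be small enough to make thin incidences negligible, yet large enough that the popular set $\frak L'$ is only a small fraction of $\frak L$, so that the shrinkage $L \to L/100$ in the recursive call comfortably beats the loss from possibly replacing $k$ by $k/2$. A minor technicality is preventing the recursion from dropping to $k' < 3$; when $k$ is so small that the $[k/2, k)$ branch would trigger this, one instead always retains the $[k, 2k]$ branch (keeping $k$ fixed), and handles any residue via the higher-dimensional Szemer\'edi--Trotter bound (Theorem \ref{sthigh}).
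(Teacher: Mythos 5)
Your proposal follows essentially the same strategy as the paper: isolate the ``popular'' lines $\frak L_1$ that contain at least $(1/100)SkL^{-1}$ points, use a Markov-type counting argument to show most points of $\frak S$ have almost all their incidences with $\frak L_1$, split dyadically into two incidence ranges, take the larger half, and induct on the number of lines using the fact that $|\frak L_1| \le L/100$. The constant bookkeeping in the main case is correct.

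The one genuine gap is your handling of the $k' < 3$ corner case. You split into $[k/2, k)$ and $[k, 2k]$, so when $k \in \{3,4,5\}$ the first branch produces $k' < 3$, and you propose to discard it and bound that residue by Theorem \ref{sthigh}. This does not work: the residue consists of points meeting at least $\lceil k/2 \rceil$ lines of $\frak L_1$, and Szemer\'edi--Trotter gives only a bound of order $|\frak L_1|^2 \lceil k/2 \rceil^{-3} + |\frak L_1| \lceil k/2 \rceil^{-1} \lesssim L^2$, which for bounded $k$ and small $B$ vastly exceeds the target $L^{3/2}k^{-2} + LBk^{-3} + Lk^{-1} \sim L^{3/2}$. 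Indeed, the whole point of Proposition \ref{incidence-} is to improve on Szemer\'edi--Trotter when $B$ is small, so ST cannot be used as a terminal bound for an unbounded residue. The paper avoids the issue entirely by using a less aggressive split: $\frak S'$ is defined as the points with $\ge (9/10)k$ incidences to $\frak L_1$ (Markov still gives $|\frak S'| \ge (9/10)S$ since the thin incidences total at most $Sk/100$), and the lower branch is assigned $k_1 = \lceil (9/10)k \rceil$, which is automatically $\ge 3$ when $k \ge 3$. The resulting constant $(20/9)(1/100)(10/9)^3 < 1$ still closes the induction. So the structure of your argument is right, but the $k/2$ threshold needs to be replaced by something like $9k/10$, and the ST fallback should be dropped.
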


\begin{proof}
Let $\frak L_1$ be the subset of lines in $\frak L$ which contain $\ge (1/100) S k L^{-1}$ points of
$\frak S$.  If $| \frak L_1| \ge (1/100) L$, then we have all the hypotheses of Proposition \ref{uniform},
and we may conclude

$$S \le C_0  [ L^{3/2} k^{-2} + L B k^{-3} + L k^{-1}]. $$

We are going to prove that $S$ obeys this same estimate, with the same constant, regardless of the size of $\frak L_1$.
The proof will go by induction on the number of lines.

From now on we assume that $| \frak L_1| \le (1/100) L$.  The lines in $\frak L_1$ contribute most of the incidences.
In particular, we have the following inequality.

$$ I (\frak S, \frak L \setminus \frak L_1) \le (1/100) S k L^{-1} \cdot L = (1/100) S k. $$

We define $\frak S' \subset \frak S$ to be the set of points with $\ge (9/10) k$ incidences
with lines of $\frak L_1$.  

If $x$ is in $\frak S \setminus \frak S'$, then $x$ lies in at least $k$ lines of $\frak L$, but less than
$(9/10) k $ lines of $\frak L_1$.  So $x$ lies in at least $(1/10) k$ lines of $\frak L \setminus \frak L_1$.
Therefore,

$$ (1/10) k |\frak S \setminus \frak S'|  \le I (\frak S \setminus \frak S', \frak L \setminus \frak L_1)
\le I (\frak S, \frak L \setminus \frak L_1) \le (1/100) S k. $$

Rearranging, we see that $| \frak S \setminus \frak S'| \le (1/10) S$, and so $| \frak S'| \ge (9/10) S$.

A point of $\frak S'$ has at least $(9/10) k$ incidences with $\frak L_1$ and at most
$2 k$ incidences with $\frak L_1$.  This is a slightly larger range than we have considered
before.  In order to do induction, we need to reduce the range.  We observe $\frak S' =
\frak S'_+ \cup \frak S'_-$, where $\frak S'_+$ consists of points with $\ge k$ incidences
to $\frak L_1$ and $\frak S'_-$ consists of points with $\le k$ incidences with $\frak L_1$.
We define $\frak S_1$ to be the larger of $\frak S'_+$ and $\frak S'_-$.  It has $\ge
(9/20) S$ points in it.

If we picked $\frak S_1 = \frak S'_+$ then we define $k_1 = k$.  If we picked
$\frak S_1 = \frak S'_-$ then we define $k_1$ to be the smallest integer
$\ge (9/10)k$.  Each point in $\frak S_1$ has at least $k_1$ and at most
$2 k_1$ incidences with lines of $\frak L_1$.  Also, $k_1$ is an integer
$\ge (9/10) k \ge 27/10$, so $k_1 \ge 3$.

The set of lines $\frak L_1$ and the set of points $\frak S_1$ obey all the hypotheses
of Theorem \ref{incidence-} (using $k_1$ in place of $k$ and using the same $B$).  There
are fewer lines in $\frak L_1$ than in $\frak L$.  Doing induction on the number of lines,
we may assume that our result holds for these sets.  If we denote $| \frak L_1| = L_1$
and $| \frak S_1 | = S_1$, we get

$$ S_1 \le C_0 [ L_1^{3/2} k_1^{-2} + B L_1 k_1^{-3} + L_1 k_1^{-1} ] . $$

Now $S \le (20/9) S_1$.  Also, $L_1 \le (1/100) L$.  And $k_1 \ge (9/10) k$.

Therefore,

$$ S \le (20/9) S_1 \le [(20/9) (1/100) (10/9)^3] C_0 [L^{3/2} k^{-2} + L B k^{-3} + L k^{-1}].$$

The bracketed product of fractions is $< 1$, and so $S$ obeys the desired bound.

\end{proof}

Finally, we can prove Theorem \ref{incidence}.

\begin{proof}[Proof of Theorem \ref{incidence}] Let $k \ge 3$.  Suppose that $\frak L$ is a set of $L$ lines with $\le B$ in any plane.  Suppose
that $\frak S$ is a set of points, each intersecting at least $k$ lines of $\frak L$.

We subdivide the points $\frak S = \cup_{j=0}^\infty \frak S_j$, where
$\frak S_j$ consists of the points incident to at least $2^j k$ lines and at most
$2^{j+1} k $ lines.  We define $k_j$ to be $2^j k$.  Then Theorem \ref{incidence-}
applies to $(\frak L, \frak S_j, k_j, B)$, and we conclude that

$$ | \frak S_j | \le C_0 [ L^{3/2} k_j^{-2} + L B k_j^{-3} + L k_j^{-1}] $$

$$ \le 2^{-j} C_0 [ L^{3/2} k^{-2} + L B k^{-3} + L k^{-1} ]. $$

Now $S \le \sum_j | \frak S_j| \le 2 C_0 [L^{3/2} k^{-2} + L B k^{-3} + L k^{-1}] $.

\end{proof}

\section{Appendix: The example of a square grid}

In this section, we return to Erd{\H o}s's example of a square grid of points.  When $P$ is
a square grid of $N$ points, we show that $|Q(P)| \gtrsim N^3 \log N$ and $|G_k(P)| \gtrsim N^3 k^{-2}$
for all $2 \le k \le N/2000$.  So the estimates in Propositions \ref{quadbound} and \ref{partsymmbound} are
sharp up to constant factors.  We also study the set of lines $\frak L$ associated to a square grid $P$.
This set of lines shows that many of our incidence estimates are
sharp up to constant factors.

Let $S \ge 1$ be an integer.  Let $P$ be the grid of points $(x,y)$ where $x$ and $y$ are integers with
norm $\le 2S$.  Note that the number of points in $|P|$ is $N = (2 S + 1)^2$.  
Let $\frak L$ be the set of lines in ${\bf R^3}$ associated to the set $P$, as in
Section 2.

\begin{lemma} If $a,b,c,$ and $d$ are positive integers with norm $\le S$, then the line from
$(a,b,0)$ to $(c,d,1)$ is contained in $\frak L$.
\end{lemma}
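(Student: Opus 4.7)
The plan is to use the explicit parametrization in equation \ref{Param} to find points $p,q \in P$ such that the line $L_{pq}$ passes through both $(a,b,0)$ and $(c,d,1)$. Setting $t=0$, the base point of $L_{pq}$ is $\left(\tfrac{p_x+q_x}{2},\tfrac{p_y+q_y}{2},0\right)$, so passing through $(a,b,0)$ forces
\[p_x+q_x = 2a, \qquad p_y+q_y = 2b.\]
The direction vector of $L_{pq}$ has third coordinate $1$, so passing through $(c,d,1)$ forces the direction to equal $(c-a,d-b,1)$. Matching with $\left(\tfrac{q_y-p_y}{2},\tfrac{p_x-q_x}{2},1\right)$ gives
\[p_x - q_x = 2(d-b), \qquad q_y - p_y = 2(c-a).\]

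Solving this linear system, I would take $p = (a+d-b,\ a+b-c)$ and $q = (a+b-d,\ b+c-a)$. A direct substitution into \ref{Param} then verifies that $L_{pq}$ contains both $(a,b,0)$ (at $t=0$) and $(c,d,1)$ (at $t=1$).

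The only thing left to check is that $p,q \in P$, i.e., that both coordinates of each point are integers with absolute value at most $2S$. Each entry is an integer sum or difference of three of the quantities $a,b,c,d$, which are positive integers at most $S$; the extremes are bounded between $1+1-S = 2-S$ and $S+S-1 = 2S-1$, both of which have absolute value at most $2S$. Hence $p,q \in P$, and therefore $L_{pq} \in \frak L$. There is no real obstacle here: once the parametrization is inverted, the arithmetic bounds on the coordinates of $p$ and $q$ drop out immediately from the hypothesis that $a,b,c,d$ are positive and bounded by $S$.
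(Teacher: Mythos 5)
Your proposal is correct and follows exactly the same route as the paper: invert the parametrization \ref{Param} to read off $p=(a+d-b,\,a+b-c)$ and $q=(a+b-d,\,b+c-a)$, then note that each coordinate is an integer of absolute value at most $2S$, so $p,q\in P$ and $L_{pq}\in\frak L$.
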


\begin{proof} Using the parametrization in \ref{Param}, we see that the line from $(a,b,0)$ to
$(c,d,1)$ is the line $L_{pq}$, where $p$ and $q$ are defined by the following equations.

$$ (1/2) (p_x + q_x) = a; (1/2) (p_y + q_y) = b; (1/2) (q_y - p_y) = c - a ; (1/2) (p_x - q_x) = d - b. $$

Solving these equations, we get $p = (a + d - b, b - c + a)$ and $q = (a - d + b, b + c - a)$.  Since 
$a,b,c$, and $d$ are positive integers of norm $\le S$, it follows that $p_x, p_y, q_x,$ and $q_y$ are
integers of norm $\le 2S$, and so $p$ and $q$ lie in $P$. \end{proof}

Let $\frak L_0 \subset \frak L$ be the set of lines from $(a,b,0)$ to $(c,d,1)$ where $a,b,c$, and $d$
are positive integers with norm $\le S$.  In the proposition below, we study the incidences of
$\frak L_0$.  Note that $| \frak L_0 | = S^4$.

\begin{proposition} \label{linesandgrids} Let $\frak S_k$ be the set of points in ${\bf R^3}$ that lie
in at least $k$ lines of $\frak L_0$.  For any $k$
in the range $2 \le k \le (1/400) S^2 $, $|\frak S_k| \gtrsim S^6 k^{-2}$.
\end{proposition}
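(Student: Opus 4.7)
The plan is to construct many points in $\frak S_k$ by classifying them according to their $z$-coordinate.  Parametrize a line in $\frak L_0$ as $\ell_{a,b,c,d}$ running from $(a,b,0)$ to $(c,d,1)$; its point at height $z=t$ is $((1-t)a+tc,\,(1-t)b+td,\,t)$.  A direct calculation shows that $\ell_{a,b,c,d}$ and $\ell_{a',b',c',d'}$ pass through a common point at height $r/s$ (with $\gcd(r,s)=1$, $0<r<s$) if and only if $(s-r)a+rc=(s-r)a'+rc'$ and $(s-r)b+rd=(s-r)b'+rd'$.  Defining
$$N_X(r,s) := \#\{(a,c) \in \{1,\dots,S\}^2 : (s-r)a + rc = X\},$$
the number of lines of $\frak L_0$ through the point $(X/s,\,Y/s,\,r/s)$ equals exactly $N_X(r,s)\cdot N_Y(r,s)$.

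Next I would establish a Diophantine lower bound for $N_X$.  Since $\gcd(s-r,r)=1$, the integer solutions of $(s-r)a+rc=X$ form a single arithmetic progression with step $(r,-(s-r))$ in $(a,c)$.  Restricting to $(r,s)$ with $r,\,s-r\ge s/4$, the constraints $a\in[1,S]$ and $c\in[1,S]$ cut out intervals in the progression parameter of lengths $\sim S/r$ and $\sim S/(s-r)$, both comparable to $S/s$.  For $X$ in a central window $[C_1 sS,\,C_2 sS]$ of length $\gtrsim sS$, these two intervals overlap substantially, yielding $N_X(r,s) \gtrsim S/s$; the same bound holds for $Y$.

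Now set $s_0=\lfloor c_1 S/\sqrt{k}\rfloor$ for a suitable absolute constant $c_1>0$; the hypothesis $k\le S^2/400$ forces $s_0$ to be at least an absolute constant (say $\ge 10$).  For each $(r,s)$ with $s\in[s_0/2,\,s_0]$, $\gcd(r,s)=1$, and $r\in[s/4,\,3s/4]$, any central $(X,Y)$ gives a point $(X/s,\,Y/s,\,r/s)$ lying on $N_X N_Y \gtrsim (S/s)^2 \ge k$ lines of $\frak L_0$, and there are $\gtrsim s^2 S^2$ such pairs $(X,Y)$.  Because $r/s$ is a reduced fraction, distinct admissible $(r,s)$ yield distinct heights, and hence disjoint sets of such points.

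Finally, I would sum over $(r,s)$.  The coprime residues modulo $s$ are nearly uniformly distributed, so the number of valid $r$ for each fixed $s$ is $\gtrsim \phi(s)$, and the classical estimate $\sum_{s\le T}\phi(s)\sim\tfrac{3}{\pi^2}T^2$ combined with partial summation gives $\sum_{s_0/2 \le s\le s_0} s^2\phi(s)\gtrsim s_0^4$.  Multiplying by the $S^2$ factor from the per-height count yields
$$|\frak S_k| \;\gtrsim\; s_0^4\,S^2 \;\gtrsim\; \frac{S^6}{k^2},$$
as required.  The main technical obstacle is the bookkeeping of explicit constants: verifying that the central window really produces $N_X N_Y \ge k$ on the nose (not merely $\gtrsim k$), and that the Euler-totient asymptotic delivers a clean lower bound on the (possibly short) range of $s$.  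The restriction $k\le S^2/400$ appearing in the proposition is precisely what makes $s_0$ large enough for these quantitative estimates to go through with absolute constants.
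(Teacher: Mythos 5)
Your proposal is correct and follows essentially the same route as the paper: classify intersection points by the reduced fraction $r/s$ giving the $z$-coordinate, count lines through a candidate point as a product of one-dimensional lattice counts (the paper phrases this via the map $F_x$ and the grid intersection $F_x(G)\cap G$, which unwinds to the same linear Diophantine condition $(s-r)a+rc=X$), and then sum over coprime pairs $(r,s)$ in a dyadic range $s\sim S k^{-1/2}$ using $\sum\phi$. Your restriction $r\in[s/4,3s/4]$ together with the central window for $(X,Y)$ plays the same role as the paper's $G_{middle}$ device, and both approaches defer the same constant-level bookkeeping.
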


\begin{proof}
Consider a point $x$ in ${\bf R^3}$ contained in the slab $0 < x_3 < 1$.  We define a map
$F_x: {\bf R^2} \rightarrow {\bf R^2}$ by saying that $F_x(a,b) = (c,d)$ if the line
from $(a,b,0)$ through $x$ hits $(c,d,1)$.  We define $G$ to be the integral grid in the plane
given by $\{ (a,b) \}$ with $1 \le a,b \le S$.  The number of lines from $\frak L_0$ which pass
through $x$ is exactly the cardinality of $F_x(G) \cap G$.  Now any intersection of two lines
from $\frak L_0$ will have rational coordinates, so we can assume the coordinates of $x$ are rational.
Let us say that the $x_3$ coordinate of $x$ is $p/q$, written in lowest terms.  

By a similar triangles argument, $F_x (G)$ is a square grid with spacing $\frac{q-p}{p}$.  Since $p$ and
$q$ are in lowest terms, the intersection $F_x (G) \cap G$ will be a rectangular grid with spacing $q-p$.
The edges of this rectangle will have length $< S$.  So the number
of points in $F_x(G) \cap G$ is at most $S^2 (q-p)^{-2}$.  On the other hand, the edges of this rectangle
have length  $< S {{q-p} \over {p}}$.  Therefore, the number of points of $F_x(G) \cap G$ is at most $S^2 p^{-2}$.
Combining these estimates, we see that $|F_x(G) \cap G| \le 4 S^2 q^{-2}$.

Let us say that the middle half of $G$, written $G_{middle} \subset G$, is the integral grid $\{ (a,b) \}$ with $(1/4) S \le a,b
\le (3/4) S$.  If $F_x$ maps a vertex from $G_{middle}$ into $G$, then the number
of intersections between $F_x(G)$ and $G$ is fairly close to this upper bound.  Using the arguments from the
last paragraph, it's straightforward to show that $|F_x(G) \cap G| \ge (1/100) S^2 q^{-2}$ whenever $F_x(G_{middle})$
intersects $G$.

Let us define $X(p,q)$ to be the set of $x = (x_1, x_2, p/q)$ so that $F_x(G_{middle}) \cap G$ is non-empty.
The set $X(p,q)$ lies in $\frak S_k$ whenever $k \le (1/100) S^2 q^{-2}$.  Equivalently, $X(p,q)$ lies in
$\frak S_k$ whenever $q \le (1/10) S k^{-1/2}$.

For any pair of points $(a_1, b_1) \in G_{middle}$ and $(a_2, b_2) \in G$, there is a unique $x \in X(p,q)$ so
that $F_x(a_1, b_1) = (a_2, b_2)$.  There are $\sim S^4$ such pairs of points.  
Each element of $X(p,q)$ corresponds to at least $(1/100) S^2 q^{-2}$ pairs
of points, and at most $4 S^2 q^{-2}$ pairs of points.  Therefore $|X(p,q)| \sim S^2 q^2$.

Now we fix $k \le (1/400) S^2$.  We pick $q$ in the range $(1/20) S k^{-1/2} \le q \le (1/10) S k^{-1/2}$.
Because $k$ is not too big, this range of $q$ contains some integers.  
For each $p$ coprime to $q$, $X(p,q) \subset \frak S_k$.  The sets $X(p,q)$ are clearly disjoint, and so

$$|\frak S_k| \gtrsim \sum_{q = (1/20) S k^{-1/2}}^{(1/10) S k^{-1/2}} \sum_{0 < p < q, gcd(p,q) = 1} |X(p,q)|
\gtrsim \sum_{q = (1/20) S k^{-1/2}}^{(1/10) S k^{-1/2}} \phi(q) S^2 q^2. $$

The sums of the Euler totient function $\phi(n)$ are well studied.  Theorem 3.7 in \cite{A} gives the
asymptotic $\sum_{q=1}^x \phi(q) = {3 \over {\pi^2}} x^2 + O(x \log x)$.  Therefore, $\sum_{q=x}^{2x}
\phi(q) \sim x^2$.  Therefore,

$$ | \frak S_k| \gtrsim \left( S k^{-1/2} \right)^2 S^2 q^2 \sim S^6 k^{-2} . $$

 \end{proof}

Recall that $|G_k(P)|$ is at least $|G_k'(P)|$, which is the number of points lying in
at least $k$ lines of $\frak L$.  So we see that $|G_k(P)| \gtrsim S^6 k^{-2} \sim N^3 k^{-2}$ for
all $2 \le k \le (1/400) S^2 \le N/2000$.  The equation \ref{quadformula} gives

$$|Q(P) | \sim \sum_{k=2}^N k |G_k(P)| \gtrsim \sum_{k=2}^{N/2000} N^3 k^{-1} \gtrsim N^3 \log N. $$

Now we consider how sharp our incidence theorems are.
The set of lines $\frak L_0 \subset \frak L$ has $\lesssim N \sim S^2$ lines in any plane or doubly ruled surface
by Proposition \ref{genericity}.
This example shows that Theorems \ref{theorem1} and \ref{theorem2} are sharp up to constant factors.

Next we consider Theorem \ref{incidence}.  The lines $\frak L_0$ correspond to Example 3 in Section 4.  The three
examples show that Theorem \ref{incidence} is sharp up to constant factors as long as $B \gtrsim L^{1/2}$.  The example
$\frak L_0$ has $B \sim L^{1/2}$.  For much
smaller values of $B$, we don't know what happens.  For example, suppose that $\frak L$ is a set of $L$ lines in ${\bf R^3}$
with at most 100 lines in any plane.  How many points can be incident to three lines of $\frak L$?  Or suppose that
$\frak L$ is a set of $L$ lines in ${\bf R^3}$ with at most 100 lines in any plane or doubly ruled surface.  How many points
can be incident to two lines of $\frak L$?

\vfill
\eject

\tiny

\textsc{L. GUTH, SCHOOL OF MATHEMATICS, INSTITUTE FOR ADVANCED STUDY, PRINCETON NJ}

{\it lguth@math.ias.edu}

\bigskip

\textsc{N. KATZ, DEPARTMENT OF MATHEMATICS, INDIANA UNIVERSITY, BLOOMINGTON IN}

{\it nhkatz@indiana.edu}

\end{document}